%% file: Infinite_time_multiradial_arXiv.tex
\documentclass[a4paper, 11pt]{article}
\usepackage{amssymb,amsmath,amsthm,lmodern,cancel}
\usepackage{amsfonts}
\usepackage{accents}

\usepackage[activate={true,nocompatibility},final,tracking=true,kerning=true,spacing=true,factor=1100,stretch=15,shrink=15]{microtype}
\microtypecontext{spacing=nonfrench}

\usepackage{graphicx}
\usepackage[english]{babel}
\usepackage[utf8]{inputenc}
\usepackage[T1]{fontenc}
\usepackage{textcomp}
\usepackage[dvipsnames]{xcolor}
\usepackage{soul}
\usepackage[shortlabels]{enumitem}
\usepackage{thmtools, mathtools,verbatim} \usepackage{upgreek}
\usepackage{thm-restate}
\usepackage{titlesec} 
\usepackage[normalem]{ulem} 
\usepackage{mathrsfs} 

\usepackage[colorlinks=false,hyperindex=true,hypertexnames=false]{hyperref}
\usepackage{cleveref}

\usepackage[font=normal]{caption}

\usepackage{thmtools}

\usepackage[bottom]{footmisc}

\title{Multiradial Schramm-Loewner evolution: \\ Infinite-time large deviations and transience}  
\date{}

\author{
Osama Abuzaid\thanks{Department of Mathematics and Systems Analysis, Aalto University, Finland.  \protect\url{osama.abuzaid@aalto.fi}} , \, 
Vivian Olsiewski Healey\thanks{Department of Mathematics, Texas State University, US. \protect\url{healey@txstate.edu} } , \,
and \,
Eveliina Peltola\thanks{Department of Mathematics and Systems Analysis, Aalto University, Finland; and \\ Division of Mathematics, University of Cologne, Germany.   \protect\url{eveliina.peltola@aalto.fi}}
}

\setlist[enumerate]{topsep = 1ex, leftmargin=1cm, itemsep= -2pt}

\let\OLDthebibliography\thebibliography
\renewcommand\thebibliography[1]{
\OLDthebibliography{#1}
\setlength{\parskip}{1pt}
\setlength{\itemsep}{2pt}
}

\allowdisplaybreaks

\newtheorem{thm}{Theorem}[section]

\newtheorem{lem}[thm]{Lemma}
\newtheorem{prop}[thm]{Proposition}

\newtheorem{theorem}{Theorem}

\newtheorem{lemA}[theorem]{Lemma}

\newtheorem{proposition}[theorem]{Proposition}

\theoremstyle{definition} 
\newtheorem{df}[thm]{Definition}

\newtheorem{remark}[thm]{Remark}

\numberwithin{equation}{section}
\numberwithin{figure}{section}

\input{macro-arxiv}

\begin{document}
\maketitle

\begin{abstract}
In previous work~\cite{AHP:Large_deviations_of_DBM_and_multiradial_SLE}, we proved a finite-time large deviation principle in the Hausdorff metric for multiradial Schramm-Loewner evolution, $\SLE_\kappa$, as $\kappa \to 0$,
with good rate function being the multiradial Loewner energy. 
Here, we extend this result to infinite time in the topology of common-capacity-parameterized curves, and streamline the proof. 
A~main step is to derive detailed escape probability estimates for multiradial $\SLE_\kappa$ curves in the common parameterization, 
which extend the single-curve estimates achieved in~\cite{Abuzaid-Peltola:Large_deviations_of_radial_SLE0}.
As a by-product, we also get that multiradial $\SLE_\kappa$ curves, with $\kappa \leq 8/3$, are transient at their common terminal point, generalizing~\cite{Field-Lawler:Escape_probability_and_transience_for_SLE, Healey-Lawler:N_sided_radial_SLE}.
As a corollary to the LDP result, we obtain explicit asymptotics of the Brownian loop measure interaction term for finite-energy radial multichords, 
which is linear in the capacity-time and coincides with a certain choice of a cocycle for the Virasoro algebra.
\end{abstract}

\newpage

\tableofcontents

\newpage

\section{Introduction}

This article is a sequel to~\cite{AHP:Large_deviations_of_DBM_and_multiradial_SLE}, 
where we proved a finite-time large deviation principle (LDP) in the Hausdorff metric for multiradial Schramm-Loewner evolution, $\SLE_\kappa$, as $\kappa \to 0+$.
In the present work, we shall improve the result in our earlier work in several ways.
First, we prove a finite-time LDP for multiradial $\SLE_{0+}$ in the topology of common-capacity-parameterized curves  (\Cref{thm:n-radial-LDP-finite}).  
This establishes the LDP in a stronger topology and enables us to extend the result to infinite time,
which is the main result of the present article (\Cref{thm:n-radial-LDP-infinite-time}).
The associated rate function is the \emph{multiradial Loewner energy}, which can be written either in terms of 
the interacting driving processes of the multiple random curves (in the spirit of the Freidlin-Wentzell theorem),
or in terms of a Brownian loop measure (in the spirit of conformal restriction covariance of $\SLE_\kappa$ curves).

Let $\Pr^\kappa$ be the law of the $n$-radial $\SLE_\kappa$ curves $\bgamma = (\gamma^1, \gamma^2, \ldots, \gamma^n)$. 
Informally, the LDP statement means that, with some choice of topology for the curves, one has
\begin{align} \label{eq:asy}
\Pr^\kappa \big[ \bgamma \textnormal{ lie in a neighborhood of given curves } \boldsymbol{\eta} \big] \quad \overset{\kappa \to 0\!+}{\sim} \quad 
\exp\bigg(\! -\frac{\nBessel(\boldsymbol{\eta})}{\kappa}\bigg)  ,
\end{align}
where $\nBessel$ is a non-negative functional on the space of curves, called the \emph{rate function}. 
It describes the exponentially fast concentration of the $n$-radial $\SLE_\kappa$ probability measure at its minima:
note that the righthand side of~\eqref{eq:asy} typically tends to zero as $\kappa \to 0$, unless $\nBessel(\boldsymbol{\eta}) = 0$.
Thus, in addition to verifying the asymptotics in~\eqref{eq:asy} rigorously in various topologies, 
it is of considerable interest to know properties of the rate function, or at least whether it admits a minimum.
In fact, rate functions for SLE variants have a deep geometric meaning (which is beyond the scope of the present article); see~\cite{Wang:SLE_LDP_survey}.
Let us note, however, that the rate function for $n$-radial $\SLE_\kappa$ displays a new phenomenon not yet observed for other SLE variants:
it inherently carries a \emph{renormalization}, due to the fact that the $n$-radial $\SLE_\kappa$ is not absolutely continuous with respect to the product measure of $n$ independent radial $\SLE_\kappa$ curves.
We shortly make this renormalization explicit, and relate it to the conformal restriction covariance of $\SLE_\kappa$ curves,
or in other words, to the conformal anomaly in the associated conformal field theory (or in representation theoretic terms, a specific choice of the Virasoro cocycle, which may be of independent interest).

\subsection{Setup and the LDP}

We consider radial multichords in the disk $\bD$, i.e.,
$n$-tuples of simple curves starting at distinct boundary points, targeted at the origin, which only intersect each other at the origin (\Cref{def:radial_multichord}).
We let $\commonpaths[\infty]$ denote the space of all simple $n$-radial multichords, 
parameterized by common capacity time (see \Cref{subsec:radial_multichords}), endowed with the metric
\begin{align}\label{def:toplogy-with-param-full}
\dcommonpaths[](\bgamma_1, \bgamma_2) 
:= \sup_{t \geq 0} \sum_{j=1}^n \big| \gamma_1^j (t) - \gamma_2^j(t) \big| 
, \qquad \bgamma_1, \bgamma_2  \in \commonpaths[\infty] .
\end{align} 

Two of us proved in~\cite[Theorem~1.2]{Abuzaid-Peltola:Large_deviations_of_radial_SLE0} 
that the (single) radial $\SLE_\kappa$ process satisfies a strong LDP in the capacity parameterization, where the rate function is obtained from the Dirichlet energy of the Loewner driving function, and conventionally dubbed ``radial Loewner energy.'' 
We refer to the introduction of the article~\cite{Abuzaid-Peltola:Large_deviations_of_radial_SLE0} for a thorough discussion of the scope of this result, and for references to related works.
It immediately follows that $n$ independent radial $\SLE_\kappa$ curves $\tilde \bgamma = (\tilde \gamma^1, \ldots, \tilde \gamma^n)$ satisfy a similar LDP with rate function
\begin{align*}
\nDindepenergy(\tilde \bgamma)  := \lim_{\tilde T \to \infty} \nDindepenergy_{\tilde T}(\tilde \bgamma|_{[0,\tilde T]}) ,
\end{align*}
defined in terms of the Loewner driving functions $(\tilde\vartheta^1, \ldots, \tilde\vartheta^n)$ of the independent curves:
\begin{align}\label{eqn:nDirichlet_energy_def}
\nDindepenergy_{\tilde T}(\tilde \bgamma) 
:= \; &
\begin{dcases} 
\sum_{j=1}^n \tfrac{1}{2} \int_0^{\tilde T} \big| \tfrac{\ud}{\ud t} \tilde \vartheta_t^j \big|^2 \ud t ,
\quad & \textnormal{if } \tilde \vartheta^j  
\textnormal{ each is absolutely continuous on $[0,\tilde T]$,} \\
\infty , & \textnormal{otherwise,}
\end{dcases}
\end{align}
where $\tilde\vartheta^j$ is the Loewner driving function of the curve $\tilde\gamma^j$, for each $j \in \{1,\ldots,n\}$.

The law $\Pr^\kappa$ of the $n$-radial $\SLE_\kappa$ process in the common parameterization (see \Cref{def:n-radial_SLE_driving_functions}) is a probability measure on the space $(\commonpaths[\infty],\dcommonpaths[])$. 
Let us state our LDP result.

\begin{restatable}{thm}{LDPinfinite} \label{thm:n-radial-LDP-infinite-time}
The family $(\Pr^\kappa)_{\kappa > 0}$ of laws of the $n$-radial $\SLE_\kappa$ curves 
satisfy an LDP on $\commonpaths[\infty]$ 
with good rate function $\nBessel$\textnormal{:}
for every open $G \subset \commonpaths[\infty]$ and closed $F \subset \commonpaths[\infty]$, we have
\begin{align}\label{eq:LDP_bounds}
\begin{split}
\liminf_{\kappa \to 0+}\kappa\log\Pr^\kappa[G] \ge\;& 
-\inf_{\bgamma \in G}\nBessel(\bgamma) , \\
\limsup_{\kappa \to 0+}\kappa\log\Pr^\kappa[F] \le\;& 
-\inf_{\bgamma \in F}\nBessel(\bgamma), 
\end{split}
\end{align}
where the rate function $\nBessel \colon \commonpaths[\infty] \to [0,+\infty]$ can be written 
in terms of  the Dyson-Dirichlet energy of the multiradial Loewner driving function $\btheta$~of~$\bgamma$,
\begin{align}\label{eq:Dyson-Dirichlet_energy_infty}
\hspace*{-4mm}
\nBessel(\bgamma) 
:= \; &
\begin{dcases} 
\tfrac{1}{2} \int_0^\infty \sum_{j=1}^n \bigg| \tfrac{\ud}{\ud s} \theta^j_s - 2 \sum_{\substack{1 \leq i \leq n \\[.1em] i\neq j}} \cot \Big( \tfrac{\theta_s^j - \theta_s^i}{2} \Big) \bigg|^2 \ud s ,
\quad & \textnormal{if } \btheta 
\textnormal{ is abs. cont. on $[0,T]$,} \\
\infty , & \textnormal{otherwise.}
\end{dcases}
\end{align}
\end{restatable}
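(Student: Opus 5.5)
Since $\commonpaths[\infty]$ carries the sup-over-all-time metric $\dcommonpaths[]$, I would deduce the infinite-time statement from the finite-time LDP on $[0,T]$ (\Cref{thm:n-radial-LDP-finite}, in the common-capacity topology) by an exponential-approximation argument in the spirit of the single-curve case~\cite{Abuzaid-Peltola:Large_deviations_of_radial_SLE0}. Three ingredients are needed:
\begin{enumerate}
\item the finite-time LDP on $[0,T]$, with good rate function $\nBessel_T$, the Dyson--Dirichlet integral~\eqref{eq:Dyson-Dirichlet_energy_infty} truncated at time $T$;
\item the Dawson--G\"artner projective-limit theorem, which gives an LDP in the topology of locally uniform convergence with rate $\sup_T \nBessel_T = \nBessel$;
\item a uniform escape estimate, used to upgrade locally uniform convergence to uniform convergence on $[0,\infty)$.
\end{enumerate}
Monotonicity of $T \mapsto \nBessel_T$ makes the identification of the rate function in step~2 immediate.

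The key estimate is an exponential tail bound in the common parameterization. For every $\varepsilon>0$ and $M<\infty$ I want a time $T=T(\varepsilon,M)$ such that
\begin{align*}
\limsup_{\kappa\to0+}\kappa\log\Pr^\kappa\Big[\sup_{t\ge T}\max_j|\gamma^j(t)|\ge\varepsilon\Big]\le -M .
\end{align*}
This is the multiradial analogue of the single-curve escape estimate. I would prove it by controlling the conformal radius $e^{-nt}$ of the complement of the hulls, together with the probability that some $\gamma^j$ returns to distance $\varepsilon$ from the boundary after time $T$. That return probability would be bounded using the martingale or Radon--Nikodym description of $n$-radial $\SLE_\kappa$ with respect to independent radial $\SLE_\kappa$, killed at suitable stopping times, combined with single-curve estimates of the form $\Pr\big[\gamma \textnormal{ hits } B(z,r)\big]\lesssim (r/|z|)^{c/\kappa}$, iterated over dyadic annuli around $0$. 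Since $\kappa\to0$, all exponents scale as $1/\kappa$, which is what yields the $-M$ bound. The same estimate at fixed $\kappa\le 8/3$ should also give the transience claim.

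Given this tail bound, let $\pi_T$ be the map that freezes a multichord after time $T$, extended by radial segments to $0$ so that the result still lies in $\commonpaths[\infty]$. Then $\dcommonpaths[](\bgamma,\pi_T\bgamma)\le 2n\sup_{t\ge T}\max_j|\gamma^j(t)|$, so the laws of $\pi_T\bgamma$ are exponentially good approximations of $\Pr^\kappa$. By the approximation theorem (Dembo--Zeitouni, Thm.~4.2.23), the LDP transfers once one checks the rate-function condition: for finite-energy $\bgamma$, the tail $\int_T^\infty$ of the energy tends to $0$, and finite energy forces $\gamma^j(t)\to0$ uniformly, by the deterministic version of the escape estimate. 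Goodness of $\nBessel$ then follows from compactness of the finite-time level sets together with this uniform decay, since sublevel sets are equicontinuous and uniformly small at large times.

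The main obstacle is the multiradial escape estimate. The interaction drift $2\sum\cot\big((\theta^j-\theta^i)/2\big)$ prevents a direct reduction to independent curves, because the Radon--Nikodym derivative degenerates as $t\to\infty$ (this is the renormalization mentioned in the introduction). My first attempt would be to localize: stop when the curves' tips are at a fixed conformal distance, use the Markov property of the common parameterization, and compare one step with independent radial $\SLE_\kappa$. In a single step the density is bounded by $e^{O(1/\kappa)}$ with a constant that can be made small relative to the gain $(r/|z|)^{c/\kappa}$ per annulus.
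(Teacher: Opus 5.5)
\textbf{Overall structure.} Your architecture matches the paper's: finite-time LDP, then Dawson--G\"artner on the projective limit, then an exponential escape estimate to upgrade to the uniform topology on $\commonpaths[\infty]$. You replace the paper's generalized contraction principle for $\limembedding^{-1}$ (applied on the closed sets $F_{\vec{\vee}}$, where $\limembedding$ is a homeomorphism) by exponentially good approximations. That is a viable variant, with two caveats. The frozen multichord must be continued by the hyperbolic geodesics of $\bD\smallsetminus\bgamma[0,T]$, not by straight radial segments, which may hit the other curves and leave $\commonpaths[\infty]$. You also need to check that this continuation is continuous in $\dcommonpaths[]$.

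\textbf{The gap: the escape estimate.} This is the heart of the proof, and the mechanism you propose fails as stated. Over a window $[t,t']$, the density of $n$-radial $\SLE_\kappa$ with respect to independent curves is
\[
\frac{\PartF(\btheta_{t'})}{\PartF(\btheta_t)}\exp\Big(\hat\beta_n(\kappa)\,n(t'-t)+\tfrac{\charge(\kappa)}{2}\big(\cL(\bgamma[0,t'])-\cL(\bgamma[0,t])\big)\Big) .
\]
The exponential factor is $e^{O(1/\kappa)}$ on bounded windows. This requires using $\charge(\kappa)\le 0$ for $\kappa\le 8/3$ to discard the loop increment, which you do not address.

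The partition-function factor is the real problem. It is only bounded by $1/\PartF(\btheta_t)=\exp(\LogPartF(\btheta_t)/\kappa)$, and $\LogPartF$ is unbounded on $\chamber$. The configuration at the start of each step is random and may be arbitrarily close to collision, so there is no deterministic per-step bound $e^{O(1/\kappa)}$. Killing the comparison when drivers come close does not fix this by itself. Near-collision in a unit window has a probability that does not decay with the window index, so a union bound over infinitely many windows diverges.

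\textbf{How the paper closes it.} The paper keeps the product structure. By the tower property at $\tau_\mindex$, the $\mindex$-th term is bounded by $e^{\hat\beta_n(\kappa)n(\mindex+1-\tau_\mindex)}\,\Ex^\kappa[1/\PartF(\btheta_{\tau_\mindex})]$ times the conditional single-curve bound of \Cref{thm:conditional-escape-probability}. That single-curve bound must hold uniformly over slit domains containing $\bD_{\vee}$; a one-point ball estimate is not enough.

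The missing idea is the a priori moment bound of \Cref{lem:pf_estimate}:
\[
\Ex^\kappa[1/\PartF(\btheta_t)]\le e^{n(n^2-1)t/12}/\PartF(\btheta_0), \qquad \kappa\le 4 .
\]
It follows from Girsanov with $(\PartF)^2$, a trigonometric identity bounding the Girsanov exponent by $\frac{n(n^2-1)}{6}t$, and Cauchy--Schwarz. Its growth rate in $t$ does not depend on $\kappa$, so on the $1/\kappa$ scale it is beaten by the escape gain $e^{-(8-\kappa)n\mindex/(2\kappa)}$. The result is a geometric series with ratio $e^{-\B(n,\kappa)/\kappa}$, where $\B(n,\kappa)>0$ for $\kappa\le 1/n^2$. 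The fixed per-window cost $e^{\hat\beta_n(\kappa)n\,O(1)}$ is then absorbed by choosing $\vee$ large depending on $\M$. Without some time-uniform control of $\LogPartF(\btheta_t)$ of this kind, your key tail bound is not established.
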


To derive the strong LDP result, we use a convenient method for tilted measures: we apply a generalized version of Varadhan’s lemma 
(which we present for completeness in \Cref{thm:Varadhan-general} in Appendix~\ref{app:Varadhan}),
allowing us to transfer the aforementioned LDP for independent curves to the interacting curves.
However, there are two pitfalls we need to be careful with: differing time-parametrizations, and the restriction of the measures to non-colliding curves.
We consider the time-parametrizations in \Cref{sec:common-and-independent-parameterizations} (see in particular \Cref{thm:time-change-properties-better})
and derive the LDP results in \Cref{sec:finite-time-LDP} 
(establishing the finite-time case in \Cref{thm:n-radial-LDP-finite}, which already requires the restriction of the measures to non-colliding curves via \Cref{thm:restricted-LDP})
and in \Cref{sec:LDP} (dealing with the infinite-time limit $T \to \infty$).
The key ingredients to prove the infinite-time LDP are precise escape probility estimates (established in \Cref{thm:conditional-escape-probability} and \Cref{thm:escape-estimates-small-kappa}).
In the same spirit, we can also show that $n$-radial $\SLE_\kappa$ curves 
are transient at their common terminal point, generalizing~\cite{Field-Lawler:Escape_probability_and_transience_for_SLE, Healey-Lawler:N_sided_radial_SLE}:

\begin{restatable}{thm}{Transience}\label{thm:transience}
Fix $\kappa \in (0,8/3]$. 
$n$-radial $\SLE_\kappa$ is transient: $\underset{t \to \infty} {\lim} \, \bgamma(t) = \boldsymbol{0}$ almost surely.
\end{restatable}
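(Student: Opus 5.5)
The plan is to follow the Field--Lawler strategy for single radial $\SLE_\kappa$, adapted to the common parameterization, and to use the escape probability estimate of \Cref{thm:conditional-escape-probability} as the main input. Since the curves are parameterized by common capacity, $\mathrm{rad}(\bgamma[t,\infty))$ does not automatically shrink, and $\bgamma(t)\to\boldsymbol 0$ is not automatic. It suffices to show that for every $\epsilon>0$, almost surely there is a time after which all $n$ curves stay in the disk $\epsilon\bD$. Fix $\epsilon$, and for $r<\epsilon$ let $\tau_r$ be the first time that some curve $\gamma^j$ reaches $r\bD$. Since the conformal radius of the complement of the hull at time $t$ is $e^{-nt}$ (up to the normalization convention), Koebe's $1/4$ theorem gives $\tau_r<\infty$ almost surely and $\tau_r\to\infty$ as $r\to0$.

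The key step is a uniform bound on the probability that, after $\tau_r$, some curve returns to the circle $\epsilon\partial\bD$. \Cref{thm:conditional-escape-probability} conditions on the configuration at a stopping time, maps out the hulls by the normalized uniformizing map $g_{\tau_r}$, and uses the domain Markov property of $n$-radial $\SLE_\kappa$ in the common parameterization. It should give
\begin{align*}
\Pr^\kappa\big[\bgamma[\tau_r,\infty)\cap\epsilon\partial\bD\neq\emptyset \,\big|\, \mathcal F_{\tau_r}\big]\le c\,(r/\epsilon)^{\alpha}
\end{align*}
for some exponent $\alpha=\alpha(\kappa,n)>0$ and a constant $c$ independent of the configuration. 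To use it, I will translate the event ``returning to $\epsilon\partial\bD$'' in the original disk into the event of reaching a small neighborhood of a boundary arc, or of the image of the earlier hulls, in the uniformized picture. The distortion estimates come from the Koebe theorems: the image of $\epsilon\partial\bD$ under $g_{\tau_r}$ lies within distance of order $\sqrt{r/\epsilon}$ of $\partial\bD$ in the relevant harmonic-measure sense. The restriction $\kappa\le 8/3$ enters exactly here, since that is the range in which \Cref{thm:conditional-escape-probability} yields a positive exponent uniformly over the positions of the other $n-1$ driving points. Near-collision configurations are handled because the bound is uniform in $\btheta_{\tau_r}$.

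To finish, take $r_k=\epsilon 2^{-k}$. The probabilities $\Pr[\bgamma[\tau_{r_k},\infty)\cap\epsilon\partial\bD\neq\emptyset]\le c\,2^{-k\alpha}$ tend to zero. Since these events decrease in $k$ (once a curve has entered $r_{k+1}\bD$ it has already entered $r_k\bD$), the probability of returning to $\epsilon\partial\bD$ infinitely late is zero. So almost surely the curves eventually remain in $\epsilon\bD$, and letting $\epsilon\downarrow0$ along a sequence gives $\bgamma(t)\to\boldsymbol0$.

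The main obstacle is the uniformity of the escape estimate in the configuration of the other curves. One curve may sit deep inside $r\bD$ while another is still near $\epsilon\partial\bD$, since the common parameterization does not synchronize the radii of the curves. To handle this, I would define $\tau_r$ instead as the first time that all curves have entered $r\bD$, or equivalently work with $\max_j|\gamma^j|$. If $\tau_r$ could be infinite, with one curve stalling, I would first show that $\min_j|\gamma^j(t)|\to0$ and $\max_j|\gamma^j(t)|\to0$ simultaneously. For this I would compare, via the Radon--Nikodym derivative on finite time intervals, with independent radial $\SLE_\kappa$ curves run in the same conformal-radius clock; the interaction term is bounded as long as the driving points are uniformly separated.
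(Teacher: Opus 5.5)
There is a genuine gap in your key step. \Cref{thm:conditional-escape-probability} concerns a \emph{single} radial $\SLE_\kappa$ in a simply connected domain $D\supset e^{-\mathsf{v}}\bD$. It holds for all $\kappa\le 4$ with exponent $(8-\kappa)/(2\kappa)>0$, and it says nothing about $n$-radial $\SLE_\kappa$ or about the other driving points. The domain Markov property of $n$-radial $\SLE_\kappa$ only gives you another $n$-radial $\SLE_\kappa$, started from the configuration at $\tau_r$. So your configuration-uniform bound $c\,(r/\epsilon)^\alpha$ is asserted rather than derived, and your explanation of where $\kappa\le 8/3$ enters is wrong.

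The missing idea is the passage from one curve to $n$ curves. Compare with independent radial $\SLE_\kappa$ in the common parameterization via \eqref{eqn:RNloop}. For $\kappa\le 8/3$ one has $\charge(\kappa)\le 0$, and $T\mapsto\cL(\bgamma[0,T])$ is nonnegative and nondecreasing. So the loop factor can be dropped over any window $[s,t]$, leaving $e^{\hat\beta_n(\kappa)n(t-s)}\,\PartF(\btheta_t)/\PartF(\btheta_s)$ as in \eqref{eq:Mgle_bound}. This is the only place $\kappa\le 8/3$ is used.

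Even then, you cannot treat the infinite-horizon event $\{\bgamma[\tau_r,\infty)\cap\epsilon\partial\bD\neq\emptyset\}$ in one go. The factor $e^{\hat\beta_n(\kappa)nT}$ diverges, and the $n$-radial law is singular with respect to the independent one on the whole time axis.

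The paper handles this as in the proof of \Cref{thm:escape-estimates-small-kappa}:
\begin{itemize}
\item It splits the escape event into pieces $A^j_m$, in which $\gamma^j$ escapes during common time $[\rhittingtime[\bgamma]^j(nm-\tfrac{\log n}{2}),\,m+1]$.
\item It conditions at the \emph{deterministic} times $\tau_m=m-(\tfrac{\log n}{2}+\log 4)/n$. The Radon--Nikodym factor over $[\tau_m,m+1]$ is then a constant times $\PartF(\btheta_{m+1})/\PartF(\btheta_{\tau_m})$, with $\PartF\le 1$.
\item Under the independent measure, the conditional law of $\gamma^j$ is a single radial $\SLE_\kappa$. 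So \Cref{thm:conditional-escape-probability} applies directly in the original coordinates, with no Beurling-type translation through $g_{\tau_r}$.
\item It sums the resulting geometric series in $m$, since the single-curve bound decays like $e^{-nm(8-\kappa)/(2\kappa)}$ while the window factor stays bounded.
\end{itemize}
The factor $1/\PartF(\btheta_{\tau_m})$ is \emph{not} bounded near collisions, so no configuration-uniform estimate comes out of this. Instead the paper averages: it bounds $\Ex^\kappa[1/\PartF(\btheta_{\tau_m})]$ uniformly in $m$ using the Healey--Lawler density bound at deterministic times $t>1$ (\Cref{thm:escape-estimates}). Your last paragraph does not supply this either, because ``bounded interaction while the points stay separated'' handles neither near-collisions nor the infinite horizon.

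The stalling scenario you worry about cannot occur. By \Cref{thm:time-change-properties-better} and \Cref{thm:rhittingtime-bounds-better}, every $\gamma^j$ enters $e^{-\mathsf{v}}\bD$ by common time $(\mathsf{v}+\tfrac{\log n}{2})/n$, deterministically. So the per-curve events \eqref{eqn:escape-event} are the right formulation. Once the key estimate is in place, your monotone-limit conclusion is fine: it only needs the probabilities to tend to $0$, whereas the paper uses Borel--Cantelli with $\mathsf{u}=m$, $\mathsf{v}=2m$.
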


A single radial $\SLE_\kappa$ curve $\gamma$ is known to be transient for all $\kappa\in (0,8)$~\cite{Lawler:Continuity_of_radial_and_two-sided_radial_SLE_at_the_terminal_point, Field-Lawler:Escape_probability_and_transience_for_SLE}.
For multiple curves, 
the recent result~\cite[Theorem~1.3]{HPW:Multiradial_SLE_with_spiral} considers multiradial $\SLE_\kappa$ (with spiral) in terms of the conditional and marginal laws of its sub-collections of curves, and establishes transience of the process.
The proof in~\cite[Theorem~1.3]{HPW:Multiradial_SLE_with_spiral} proceeds via heavy analysis of $\SLE_\kappa$ partition functions, and the argument implying transience is a consequence of the transience of the half-watermelon $\SLE_\kappa$ process~\cite[Section~4]{Miller-Sheffield:Imaginary_geometry1}, 
which in turn relies on the flow-line (``imaginary geometry'') coupling of $\SLE_\kappa$ with the Gaussian free field~\cite{Miller-Sheffield:Imaginary_geometry1}.
Our arguments leading to \Cref{thm:transience} are direct and overwhelmingly simpler, 
but we assume $\kappa \leq 8/3$,
so that the central charge is negative and we can ignore the Brownian loop measure term (which diverges as $T \to \infty$, causing difficulties to bound the Radon-Nikodym derivative in Equation~\eqref{eqn:RNloop}).
We have not attempted to improve the range of $\kappa$ in \Cref{thm:transience}, which might be possible (and interesting) to do.

\subsection{The multiradial Loewner energy}

For a radial multichord $\bgamma$, the multiradial Loewner energy can be defined in various ways.
On the one hand, it has an explicit expression in terms of the interacting driving processes of the multiple random curves, Equation~\eqref{eq:Dyson-Dirichlet_energy_infty}, 
which relies on adding the interaction into the Dirichlet energy as a drift,
and was a key object in~\cite{Healey-Lawler:N_sided_radial_SLE, AHP:Large_deviations_of_DBM_and_multiradial_SLE}.
This is the \emph{dynamical} point of view of constructing multiple radial curves via Loewner theory. 
On the other hand, it can be written in terms of a Brownian loop measure $\BLoop_{\bD}$ (see Equation~\eqref{eq:BLM_energy}), 
encoding the interaction of the curves in a more \emph{global} manner~\cite{Lawler:Partition_functions_loop_measure_and_versions_of_SLE,Healey-Lawler:N_sided_radial_SLE}:
\begin{align*}
\cL(\bgamma[0,T]) :=
\sum_{j=2}^n \BLoop_{\bD} \big[ \ell \colon \gamma^j[0,T] \cap \ell \ne \emptyset \textnormal{ for at least two distinct } j \big] .
\end{align*}
The energy functional also involves the semiclassical partition function, encoding the conformal moduli. 
It is a positive function $\LogPartF \colon \chamber \to (0,\infty)$, 
\begin{align} \label{eq:LogPartF}
\LogPartF(\boldsymbol{\alpha}) 
:= - 2 \underset{1\leq i<j\leq n}{\sum} \log \sin\bigg(\frac{\alpha^j-\alpha^i}{2}\bigg) ,
\end{align}
defined on the subset $\chamber \subset (\bR/2\pi\bZ)^n$ of the torus with periodic boundary conditions comprising the elements admitting representatives 
$\boldsymbol{\alpha} = (\alpha^1, \ldots, \alpha^n)$ which satisfy
\begin{align} \label{eq: torus ordering}
\alpha^1<\alpha^2< \cdots< \alpha^n <\alpha^1+2\pi.
\end{align}
(Throughout, we use the convention that $\theta^{n+j} = \theta^j+2\pi$ for all $j$.)

\begin{restatable}{thm}{BLMForm}\label{thm:BLMForm}
Fix a starting point $\btheta_0 \in \chamber$. 
The rate function $\nBessel \colon \commonpaths[\infty] \to [0,+\infty]$ can be written in terms of the Brownian loop measure and the semiclassical partition function as
\begin{align}\label{eq:BLM_energy}
\nBessel(\bgamma) \; = \; \nDindepenergy(\bgamma) \; - \; \LogPartF(\btheta_0) \; + \;  \inf_{\boldsymbol{\alpha} \in \chamber}\LogPartF(\boldsymbol{\alpha}) \; + \;  \limsup_{T \to \infty} 12 \cLrenorm(\bgamma[0,T]) ,
\end{align}
where
\begin{align}\label{eq:LogPartFMin}
\inf_{\boldsymbol{\alpha} \in \chamber}\LogPartF(\boldsymbol{\alpha}) 
= - 2 \underset{1\leq i<j\leq n}{\sum} \log \sin\big(\tfrac{\pi(j-i)}{n}\big) ,
\end{align}
and 
\begin{align}\label{eq:cLrenorm}
\cLrenorm(\bgamma[0,T]) \, := \, \cL(\bgamma[0,T])  \, - \,  \tfrac{1}{24} (n + 4) (n - 1) \, n \, T.
\end{align}
\end{restatable}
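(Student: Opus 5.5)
The plan is to prove a finite-time version of \eqref{eq:BLM_energy} first and then send $T \to \infty$. For fixed $T$ and $\bgamma$ with finite energy, the target identity is
\begin{align*}
\nBessel_T(\bgamma) \;=\; \nDindepenergy_{\tilde T}(\bgamma) \;-\; \LogPartF(\btheta_0) \;+\; \LogPartF(\btheta_T) \;+\; 12\, \cLrenorm(\bgamma[0,T]) ,
\end{align*}
where $\nBessel_T$ is the integral in \eqref{eq:Dyson-Dirichlet_energy_infty} truncated at $T$. Here $\nDindepenergy_{\tilde T}$ is the independent-curves energy of the same curves, with each $\gamma^j$ reparameterized by its own capacity up to the time $\tilde T^j$ it reaches at common time $T$.

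There are two routes to this finite-time identity.

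\emph{Deterministic route.} Expand the square in \eqref{eq:Dyson-Dirichlet_energy_infty}. Since $\partial_{\alpha^j}\LogPartF(\boldsymbol\alpha) = -\sum_{i\neq j}\cot\big(\tfrac{\alpha^j-\alpha^i}{2}\big)$, the cross term is an exact derivative $\tfrac{\ud}{\ud s}\LogPartF(\btheta_s)$, up to the normalization constant of the common-parameterization Loewner equation. This produces the boundary terms $\LogPartF(\btheta_T)-\LogPartF(\btheta_0)$.
\begin{itemize}
\item The squared-drift term $\sum_j\big(\sum_{i\ne j}\cot\tfrac{\theta^j-\theta^i}{2}\big)^2$ splits, via the identity $\cot a\cot b+\cot b\cot c+\cot c\cot a = 1$ for $a+b+c=0$ mod $\pi$ applied to triples, into a constant and $\sum_{i\neq j}\cot^2$ terms.
\item The constant is what becomes $\tfrac{1}{2}(n+4)(n-1)n$, matching the renormalization in \eqref{eq:cLrenorm}.
\item The $\cot^2$ terms, together with the change of the kinetic term $\tfrac12\int|\dot\theta^j|^2$ under the time change from common to independent parameterization (\Cref{thm:time-change-properties-better}), should assemble into $12\,\tfrac{\ud}{\ud t}\cL$. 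This uses the standard formula expressing the growth rate of the loop measure of loops hitting two hulls through the Schwarzian derivative of the uniformizing maps.
\end{itemize}

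\emph{Probabilistic route (cleaner).} Start from the finite-time Radon–Nikodym derivative \eqref{eqn:RNloop} of $n$-radial $\SLE_\kappa$ against independent radial $\SLE_\kappa$. This derivative is a product of
\begin{itemize}
\item a partition-function ratio, whose exponent is $\kappa$-dependent and multiplies $\LogPartF$ by $1/\kappa$ at leading order;
\item $\exp(\tfrac{c}{2}\cL)$ with $c = (3\kappa-8)(6-\kappa)/(2\kappa)$, so that $\kappa c/2 \to -12$;
\item an explicit $e^{\lambda T}$ factor giving the linear term.
\end{itemize}
Combining the finite-time LDPs for both families (\Cref{thm:n-radial-LDP-finite} and the independent one) with the generalized Varadhan lemma (\Cref{thm:Varadhan-general}), the two rate functions must differ exactly by $-\lim\kappa\log(\text{RN derivative})$. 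Evaluating that limit termwise yields the displayed identity. Since good rate functions are uniquely determined, this identifies $\nBessel_T$ pointwise, at least on curves where the tilt is continuous, which holds away from collisions by \Cref{thm:restricted-LDP}.

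For the limit $T\to\infty$, note that $\nBessel_T\uparrow\nBessel$ and $\nDindepenergy_{\tilde T}\uparrow\nDindepenergy$. It remains to show $\LogPartF(\btheta_T)\to\inf_{\chamber}\LogPartF$ whenever $\nBessel(\bgamma)<\infty$; the $\limsup$ then transfers to the loop term. The minimum value \eqref{eq:LogPartFMin} is attained at equally spaced points, by concavity of $\log\sin$ and Jensen's inequality.

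For the convergence, write $\dot\btheta = -2\nabla\LogPartF(\btheta) + \mathbf{h}$ with $\mathbf{h}\in L^2$. Then $\tfrac{\ud}{\ud s}\LogPartF(\btheta_s) \le -|\nabla\LogPartF|^2 + \tfrac14|\mathbf h|^2$, so $\int_0^\infty|\nabla\LogPartF(\btheta_s)|^2\ud s<\infty$ and $\LogPartF(\btheta_s)$ stays bounded. From this I will argue that $\btheta_s$ stays in a compact subset of $\chamber$ and approaches the critical set of $\LogPartF$, which modulo rotation is the equally spaced configuration. Here I would use that $\LogPartF$ is a Morse–Bott function on the chamber with a unique critical orbit.

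Infinite energy cases are handled by noting that both sides are $+\infty$. For the independent side this follows from lower semicontinuity; the loop term is bounded below after renormalization.

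I expect the main obstacle to be this last step: ruling out that $\LogPartF(\btheta_T)$ oscillates without converging to its infimum, when only $L^2$ control on $\mathbf h$ is available. The first thing I would try is to show that the set of times where $|\nabla\LogPartF|$ is large has finite measure. Then, on the complementary long intervals, I would use that $\mathbf h$ has small $L^2$ norm, so $\btheta$ cannot travel far from the critical orbit.
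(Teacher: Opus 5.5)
Your finite-time identity is exactly \eqref{eqn:BM-loop-energy} combined with \eqref{eq:Psi-0}. It is therefore already supplied by \Cref{thm:n-radial-LDP-finite}, and the heuristic deterministic route is unnecessary. Your finite-energy limit via $\LogPartF(\btheta_T)\to\inf_{\chamber}\LogPartF$ is the paper's use of \Cref{thm:minimum}.

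\textbf{The gap: the case $\nDindepenergy(\bgamma)<\infty=\nBessel(\bgamma)$.} You dismiss $\nBessel(\bgamma)=\infty$ with ``both sides are $+\infty$'', but you only argue this when $\nDindepenergy(\bgamma)=\infty$. The substantive case is finite independent energy with infinite multiradial energy.
\begin{enumerate}
\item The finite-time identity gives $\LogPartF(\btheta_T)+12\cLrenorm(\bgamma[0,T])\to+\infty$.
\item If $\liminf_T\LogPartF(\btheta_T)<\infty$, this does force $\limsup_T\cLrenorm(\bgamma[0,T])=+\infty$.
\item If instead $\LogPartF(\btheta_T)\to+\infty$, the drivers degenerate toward a collision. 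Finite independent energy does not exclude this, since $\nDindepenergy$ does not see the relative position of the curves. The whole divergence could then a priori sit in $\LogPartF(\btheta_T)$, which \eqref{eq:BLM_energy} replaces by its infimum. Nothing in your plan prevents $\cLrenorm$ from staying bounded.
\end{enumerate}
Closing this needs a geometric input, the paper's \Cref{thm:loop-blowup}. It says that a unit-time multichord with bounded independent energy, started from drivers with $\delta(\btheta_0)<\varepsilon$, has $\cL\ge M$. The proof is by compactness: bounded energy and goodness of the radial Loewner energy give precompactness, \eqref{eqn:time-change-comparison-help} bounds the capacities below, and a harmonic-measure argument forces the limiting curves to meet. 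Apply this to $g_t(\bgamma[t,t+1])$ with $M>\tfrac1{24}(n+4)(n-1)n$. Together with $\cL(\bgamma[0,t+1])-\cL(\bgamma[0,t])\ge\cL(g_t(\bgamma[t,t+1]))$, which follows from restriction and conformal invariance, this gives $\cLrenorm(\bgamma[0,T])\to+\infty$.

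\textbf{Unproved lower bound on the loop term.} Your claim that the renormalized loop term is bounded below is not justified: $\cL\ge 0$ only gives $\cLrenorm(\bgamma[0,T])\ge-\tfrac1{24}(n+4)(n-1)nT$. You rely on it when $\nDindepenergy=\infty$. You also rely on it implicitly when $\nBessel<\infty$, to know $\nDindepenergy<\infty$ so that $\nDindepenergy_{\bindeptime(T)}+12\cLrenorm(\bgamma[0,T])$ splits without an $\infty-\infty$. Either prove it, or state it as the proviso that makes the right-hand side of \eqref{eq:BLM_energy} meaningful. The paper's splitting off of $\nDindepenergy$ is equally tacit on this point.

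\textbf{Your anticipated obstacle is not real.} Your self-contained route works: Jensen for \eqref{eq:LogPartFMin}, and the energy inequality in place of the cited convergence of finite-energy drivers. The missing step is short.
\begin{itemize}
\item The bound $\big|\tfrac{\ud}{\ud s}\LogPartF(\btheta_s)\big|\le 3|\nabla\LogPartF(\btheta_s)|^2+\tfrac14|\mathbf h(s)|^2$ is integrable on $[0,\infty)$, so $\lim_s\LogPartF(\btheta_s)$ exists.
\item Evaluate this limit along times $s_k$ with $\nabla\LogPartF(\btheta_{s_k})\to 0$, using compactness modulo rotation.
\item $\LogPartF$ is convex on the lifted chamber, with Hessian $\tfrac12\sum_{i<j}\csc^2\big(\tfrac{\alpha^j-\alpha^i}{2}\big)(e_j-e_i)(e_j-e_i)^{\top}$, whose kernel is spanned by $(1,\ldots,1)$.
\item Hence the equally spaced orbit is the only critical set, and the limit is $\inf_{\chamber}\LogPartF$.
\end{itemize}
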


For the driving processes, we already know by our earlier result~\cite[Theorem~1.18]{AHP:Large_deviations_of_DBM_and_multiradial_SLE}
that finite-energy drivers converge in the long term to an equally spaced configuration. 
The convergence rate is exponential for zero-energy drivers, but can be arbitrarily slow for general finite-energy drivers; 
and the limit is static for zero-energy drivers, but can exhibit slow spiraling for general finite-energy drivers (see~\cite[Examples~4.8~\&~4.10]{AHP:Large_deviations_of_DBM_and_multiradial_SLE}).
These configurations minimize the semiclassical partition function. 
We record this fact here for two reasons: it is of independent interest, and we shall use it in the proof of \Cref{thm:BLMForm}.

\begin{restatable}{prop}{minimum}\label{thm:minimum}
For any finite-energy $n$-radial multichord, i.e., if $\nBessel(\bgamma) < \infty$, we have
\begin{align*}
\lim_{T \to \infty}\LogPartF(\btheta_T) = \inf_{\boldsymbol{\alpha} \in \chamber}\LogPartF(\boldsymbol{\alpha}) 
= - 2 \underset{1\leq i<j\leq n}{\sum} \log \sin\big(\tfrac{\pi(j-i)}{n}\big) ,
\end{align*}
where $\btheta$ is the multiradial Loewner driving function~of~$\bgamma$. 
\end{restatable}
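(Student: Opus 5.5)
The plan has two parts: first identify the minimizer of $\LogPartF$ on $\chamber$, then show that finite energy forces the drivers to converge to that minimizer. Both parts rest on the earlier result~\cite[Theorem~1.18]{AHP:Large_deviations_of_DBM_and_multiradial_SLE}, which says that finite-energy drivers converge in the long term to an equally spaced configuration, possibly with a slow global rotation.

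\textbf{Step 1: the infimum.} Write the gaps as $x_j := \alpha^{j+1}-\alpha^j > 0$, using the convention $\alpha^{n+1}=\alpha^1+2\pi$, so that $\sum_{j=1}^n x_j = 2\pi$. Each pairwise difference $\alpha^j-\alpha^i$ with $i<j$ is a sum of consecutive gaps and lies in $(0,2\pi)$. Hence each term $-\log\sin(\cdot/2)$ is finite and convex in that difference, since $-\log\sin$ is convex on $(0,\pi)$. A sum of convex functions of linear forms in $(x_1,\dots,x_n)$ is convex on the simplex $\{x_j>0, \sum_j x_j=2\pi\}$.

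The function $\LogPartF$ is invariant under the cyclic shift of the gaps: relabeling indices $j\mapsto j+1$ with the periodic convention leaves the multiset of differences modulo $2\pi$ unchanged, and $\sin(\cdot/2)$ is symmetric under $d\mapsto 2\pi-d$. Given any minimizer, averaging it over the $n$ cyclic shifts therefore produces, by convexity, the equal-gap point $x_j = 2\pi/n$, whose value is no larger. Existence of a minimizer follows because $\LogPartF\to+\infty$ at the boundary of $\chamber$, where some difference tends to $0$ or $2\pi$. At the equal-gap point, $\alpha^j-\alpha^i = 2\pi(j-i)/n$, which gives exactly the value in \eqref{eq:LogPartFMin}.

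\textbf{Step 2: convergence.} Since $\nBessel(\bgamma)<\infty$, the cited theorem gives $\theta^{j+1}_T-\theta^j_T\to 2\pi/n$ for every $j$, although the global angle may keep rotating. $\LogPartF$ depends only on the differences and is continuous on $\chamber$. Therefore $\LogPartF(\btheta_T)$ converges to its value at the equal-gap configuration, which by Step~1 is the infimum.

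\textbf{Main obstacle.} If one does not want to quote~\cite[Theorem~1.18]{AHP:Large_deviations_of_DBM_and_multiradial_SLE} directly, one must show the convergence of the gaps from finite energy. The approach I would try uses the gradient structure of the drift. One has $\partial_{\theta^j}\LogPartF = -\sum_{i\neq j}\cot((\theta^j-\theta^i)/2)$, so the driver satisfies $\dot\btheta = -2\nabla\LogPartF + \dot{\boldsymbol h}$ with $\int_0^\infty |\dot{\boldsymbol h}|^2 < \infty$. Then
\[
\tfrac{\ud}{\ud t}\LogPartF(\btheta_t) = -2|\nabla\LogPartF|^2 + \nabla\LogPartF\cdot\dot{\boldsymbol h} \le -|\nabla\LogPartF|^2 + \tfrac14|\dot{\boldsymbol h}|^2 ,
\]
by Young's inequality. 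Integrating in time yields two consequences: $\LogPartF(\btheta_t)$ stays bounded, and $\int_0^\infty|\nabla\LogPartF(\btheta_t)|^2\,\ud t<\infty$. Boundedness keeps the trajectory in a compact set of gap configurations. Moreover, $\LogPartF(\btheta_t)+\tfrac14\int_t^\infty|\dot{\boldsymbol h}|^2$ is nonincreasing and bounded below, so $\LogPartF(\btheta_t)$ converges.

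The delicate point is upgrading the integrability of $|\nabla\LogPartF(\btheta_t)|^2$ to $|\nabla\LogPartF(\btheta_t)|\to 0$ along a sequence of times. Integrability alone forces $\liminf_{t\to\infty}|\nabla\LogPartF(\btheta_t)| = 0$. Take such a sequence of times; by compactness the gaps have a limit point, and there $\nabla\LogPartF=0$. By strict convexity in the gap variables, the only critical point is the equal-gap configuration. Continuity of $\LogPartF$ then shows the convergent limit of $\LogPartF(\btheta_t)$ equals the infimum.
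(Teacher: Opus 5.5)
Your proposal is correct. For the convergence half it matches the paper, which also quotes the long-time behaviour of finite-energy drivers from \cite[Proposition~4.7]{AHP:Large_deviations_of_DBM_and_multiradial_SLE} and then uses translation invariance and continuity of $\LogPartF$.

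The real difference is in identifying the infimum. The paper cites \cite[Proposition~4.4]{AHP:Large_deviations_of_DBM_and_multiradial_SLE}: every gradient flow of $\LogPartF$ stabilizes at an equally spaced configuration. It then concludes because $\LogPartF$ is non-increasing along that flow. This is short, but it depends on the dynamical analysis of the earlier paper. You instead use three ingredients: convexity of $\LogPartF$ in the gap coordinates (from convexity of $-\log\sin$ on $(0,\pi)$), cyclic symmetry, and Jensen averaging over the $n$ shifts. Your route is elementary and self-contained. It also gives uniqueness of the minimizer up to rotation, because convexity is strict: the nearest-neighbour terms together with the wrap-around term are strictly convex functions of each of $x_1,\ldots,x_n$. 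One small simplification: the averaging argument applies to any point of the simplex directly, so you do not need existence of a minimizer.

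Your fallback argument is also sound, and it is a genuinely different proof of the whole proposition.
\begin{itemize}
\item Since the drift equals $-2\nabla\LogPartF$, the Young-inequality estimate shows that $\LogPartF(\btheta_T)$ converges and that $\int_0^\infty |\nabla\LogPartF(\btheta_t)|^2 \ud t<\infty$.
\item Compactness of the sublevel sets, plus the fact that the equal-gap point is the only critical point, identifies the limit as the infimum.
\end{itemize}
This needs no input from the earlier paper. It gives no information about rotation or convergence rates, but the statement does not require either.
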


Whether or not the limit as $T \to \infty$ of the righthand side of Equation~\eqref{eq:BLM_energy} exists is not clear a priori,
because the term involving the Brownian loop measure diverges due to the presence of small loops at the vicinity of the common endpoint of the curves. 
As a byproduct of our result, we obtain the explicit asymptotics of this divergence. 
Interestingly, it  coincides with the Gel'fand-Fuks cocycle~\cite{Gelfand-Fuchs:Cohomologies_of_the_Lie_algebra_of_vector_fields_on_the_circle} 
of the Virasoro algebra with a certain (atypical) normalization (more typical ones are multiples of $n^3$ and $(n^2-1)n = (n + 1) (n - 1) n$).

\begin{restatable}{cor}{BLM}\label{thm:BLM}
For any finite-energy $n$-radial multichord, i.e., if $\nBessel(\bgamma) < \infty$, we have
\begin{align*}
\lim_{T \to \infty} \frac{\cL(\bgamma[0,T])}{T} = \frac{(n + 4) (n - 1) n}{24} .
\end{align*}
\end{restatable}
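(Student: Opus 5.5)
The plan is to deduce \Cref{thm:BLM} from \Cref{thm:BLMForm}. The point is that every term in \eqref{eq:BLM_energy} other than the loop term is finite for a finite-energy multichord, so the renormalized loop term must grow sublinearly in $T$.

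\textbf{Step 1: finite-time identity.} I would use the finite-$T$ version of \eqref{eq:BLM_energy}, which the proof of \Cref{thm:BLMForm} establishes before taking limits. For each fixed $T$, comparing the Dyson-Dirichlet energy of $\btheta$ with the Dirichlet energy of the independent drivers, after the time change between common and independent parameterizations from \Cref{sec:common-and-independent-parameterizations}, should give an identity of the form
\begin{align*}
\nBessel_T(\bgamma) \;=\; \nDindepenergy_{\tilde T}(\bgamma) \;-\; \LogPartF(\btheta_0) \;+\; \LogPartF(\btheta_T) \;+\; 12\, \cLrenorm(\bgamma[0,T]) .
\end{align*}
The linear term $\tfrac{1}{24}(n+4)(n-1)nT$ arises exactly as the time integral of the constant produced by applying It\^o/Loewner calculus to $\LogPartF$ along the flow.

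\textbf{Step 2: bound everything except the loop term.} Rearranging, $12\,\cLrenorm(\bgamma[0,T])$ equals $\nBessel_T - \nDindepenergy_{\tilde T} + \LogPartF(\btheta_0) - \LogPartF(\btheta_T)$. Each term on the right is controlled as follows.
\begin{itemize}
\item[(a)] $0 \le \nBessel_T(\bgamma) \le \nBessel(\bgamma) < \infty$.
\item[(b)] $\nDindepenergy_{\tilde T}$ is monotone in $T$ and bounded above by its limit $\nDindepenergy(\bgamma)$. This limit is finite: by \eqref{eq:BLM_energy}, $\nDindepenergy(\bgamma) = \nBessel(\bgamma) + \LogPartF(\btheta_0) - \inf\LogPartF - 12\limsup_T \cLrenorm$, and $\cLrenorm$ is at least bounded below. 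Alternatively, finiteness follows from the drift term being bounded once the drivers are near equal spacing, which holds by \Cref{thm:minimum}.
\item[(c)] $\LogPartF(\btheta_T)$ converges to $\inf\LogPartF$ by \Cref{thm:minimum}.
\end{itemize}
Hence $\cLrenorm(\bgamma[0,T])$ stays bounded as $T\to\infty$. In fact it converges, since $\nBessel_T \to \nBessel$ and $\nDindepenergy_{\tilde T}\to \nDindepenergy$ monotonically.

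\textbf{Step 3: conclude.} Dividing \eqref{eq:cLrenorm} by $T$ gives $\cL(\bgamma[0,T])/T = \tfrac{1}{24}(n+4)(n-1)n + O(1)/T$, and letting $T \to \infty$ yields the claim.

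\textbf{Main obstacle.} The hard part is step (b): justifying that $\nDindepenergy(\bgamma)$ is finite independently of the loop term, so that the argument is not circular. I would first try to show that the independent capacity time $\tilde T$ is comparable to $T$ plus a bounded error, using \Cref{thm:time-change-properties-better}. The independent drivers' derivatives are then the common drivers' derivatives, rescaled by the time-change derivatives, plus explicit cotangent-type terms that are square-integrable when the configuration is near equally spaced. A lower bound on $\cLrenorm$ would then give the upper bound by the identity in Step 1.
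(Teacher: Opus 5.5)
Your route is the paper's own. Its proof of \Cref{thm:BLM} is the one line ``direct consequence of \eqref{eq:BLM_energy}''. Unpacking that line needs exactly three things: your Step~1 identity (which is \eqref{eqn:BM-loop-energy} with \eqref{eq:Psi-0}, from \Cref{thm:n-radial-LDP-finite}), \Cref{thm:minimum}, and the monotone limits $\nBessel_T \uparrow \nBessel$ and $\nDindepenergy_{\bindeptime(T)} \uparrow \nDindepenergy$. The last is needed because \eqref{eq:BLM_energy} by itself only controls $\limsup_T \cLrenorm$. Steps~1 and~3 and items (a), (c) are fine.

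The gap is (b), and it is the whole content of the statement, not a technicality. Rewrite Step~1 as $12\cLrenorm(\bgamma[0,T]) + \nDindepenergy_{\bindeptime(T)}(\bgamma) = \nBessel_T(\bgamma) + \LogPartF(\btheta_0) - \LogPartF(\btheta_T)$. When $\nBessel(\bgamma)<\infty$, the right side is bounded in $T$. Hence $\limsup_T \cL(\bgamma[0,T])/T \le \frac{(n+4)(n-1)n}{24}$ comes for free. The matching lower bound, however, is \emph{equivalent} to $\nDindepenergy_{\bindeptime(T)}(\bgamma) = o(T)$, and $\cLrenorm$ being bounded below is equivalent to $\nDindepenergy(\bgamma) < \infty$. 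So your first argument for (b), which assumes $\cLrenorm$ is bounded below, assumes what is to be proved. The last sentence of your ``main obstacle'' paragraph is circular in the same way. Your alternative (drift bounded once the drivers are nearly equally spaced) gives only $\nDindepenergy_{\bindeptime(T)}(\bgamma) = O(T)$, hence only $\liminf_T \cL(\bgamma[0,T])/T \ge \frac{(n+4)(n-1)n}{24} - C$.

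What is missing is control of the discrepancy between the independent and interacting drifts, and this discrepancy is not a function of the drivers alone. By \Cref{thm:independent-time-change}, $\partial_t \indeptime^j_\bgamma(t) = (h'_{t,j}(\xi^j_t))^{-2}$, so the change of variables is exact and no comparison of $\indeptime^j_\bgamma$ with $nt$ is needed: $\nDindepenergy_{\bindeptime(T)}(\bgamma) = \frac12 \sum_j \int_0^T | \dot\theta^j_t - (\partial_t h_{t,j})(\xi^j_t) |^2 \ud t$. Differentiating $g_t = g_{t,j} \circ g^j_t$ at the tip gives $(\partial_t h_{t,j})(\xi^j_t) = \sum_{i \neq j} \cot(\frac{\theta^j_t - \theta^i_t}{2}) - 3 h''_{t,j}(\xi^j_t) / h'_{t,j}(\xi^j_t)^2$.

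The integrand of \eqref{eq:Dyson-Dirichlet_energy_infty} is $\dot\theta^j_t + 2\partial_j\LogPartF(\btheta_t)$. Against it, the cotangent part of the discrepancy is $-\partial_j \LogPartF(\btheta_t)$, and this part is harmless: expanding the square gives $2\int_0^\infty |\nabla \LogPartF(\btheta_t)|^2 \ud t \le \nBessel(\bgamma) + 2(\LogPartF(\btheta_0) - \inf_{\chamber}\LogPartF)$. The remaining term $h''_{t,j}(\xi^j_t)/h'_{t,j}(\xi^j_t)^2$ is different. It measures the asymmetry about $\xi^j_t$ of the other curves after $\gamma^j$ is mapped out, and it depends on their entire past shape. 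You would need a genuine geometric estimate showing that its square has vanishing time average, for instance fading memory of the past combined with $\dot{\btheta} \in L^2$ and convergence to equal spacing.

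Your proposal does not supply this estimate, and neither does the paper explicitly. Its one-liner obtains $\nDindepenergy(\bgamma) < \infty$ only implicitly, by reading \eqref{eq:BLM_energy} as an identity among finite quantities whenever $\nBessel(\bgamma) < \infty$. If you want to work at that level, cite \Cref{thm:BLMForm} in that form explicitly, instead of the circular lower bound on $\cLrenorm$.
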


We prove \Cref{thm:BLMForm}, \Cref{thm:minimum}, and \Cref{thm:BLM} in \Cref{subsec:multiradial_Loewner_energy}.

\subsection*{Acknowledgments}

Part of this project was performed while the authors were participating in a program hosted by the Hausdorff Research Institute for Mathematics (HIM) in Bonn, Germany, in Summer 2025,  
supported by the Deutsche Forschungsgemeinschaft (DFG, German Research Foundation) under Germany's Excellence Strategy EXC-2047/1-390685813.
O.A.~is supported by the Academy of Finland grant number 340461 ``Conformal invariance in planar random geometry.''
V.O.H.~is partially supported by an AMS Simons Research Enhancement Grant for PUI Faculty.
This material is part of a project that has received funding from the  European Research Council (ERC) under the European Union's Horizon 2020 research and innovation programme (101042460): 
ERC Starting grant ``Interplay of structures in conformal and universal random geometry'' (ISCoURaGe) 
and from the Academy of Finland grant number 340461 ``Conformal invariance in planar random geometry.''
E.P.~is also supported by 
the Academy of Finland Centre of Excellence Programme grant number 346315 ``Finnish centre of excellence in Randomness and STructures (FiRST)'' 
and by the Deutsche Forschungsgemeinschaft (DFG, German Research Foundation) project number 390534769 ``Matter and Light for Quantum Computing (ML4Q).

\section{Comparison of common and independent parametrizations}
\label{sec:common-and-independent-parameterizations}

In \Cref{sec:finite-time-LDP}, we will transfer a large deviations result of~\cite{Abuzaid-Peltola:Large_deviations_of_radial_SLE0} for independent $\SLE_{0+}$ curves to the $n$-radial case.  
However, the definition of $n$-radial $\SLE_\kappa$ (\Cref{def:n-radial_SLE_driving_functions}) 
uses the \emph{common parameterization} for simple radial multichords.
Accordingly, we will need explicit bounds governing the time-change from independent to common parameterization and vice versa (see \Cref{thm:time-change-properties-better}). 
This section is dedicated to such preliminary results.

\subsection{Hulls and mapping-out functions}

A \emph{hull} is a compact set $K \subset \overline{\bD}$ such that $\bD\smallsetminus K$ is simply connected, $0 \in \bD\smallsetminus K$, and $\overline{K \cap \bD} = K$. 
The \emph{mapping-out function} of the hull $K$ is defined as the unique conformal bijection  
$g_K \colon \bD \smallsetminus K \to \bD$ satisfying the normalization $g_K(0) = 0$ and $g_K'(0) > 0$. 
By Schwarz lemma, we have $g_K'(0) \geq 1$.
The (logarithmic) capacity of $K$ is defined as $\rcap(K) := \log g'_K(0) \geq 0$.
By Schwarz reflection, the map $g_K$ extends to a conformal map on $D_K := \hat{\bC}\smallsetminus (K \cup K^*)$, 
whose image is a slit sphere $S_K :=  g_K(D_K)$, where $K^*$ is the reflection of $K$ across the unit circle and $\hat{\bC} = \bC \cup \{\infty\}$ is the Riemann sphere.  
Throughout, we also write $e^{\ii h_K(z)} := g_K(e^{\ii z})$, where the covering map is chosen such that $h_K(0) \in [0,2\pi)$. 

For a domain $D \subsetneq \bC$, we let $\hmeas{D}(I; \cdot)$ denote the harmonic measure of $I \subset \partial D$. 

\begin{lem}\label{thm:h-derivative-bound-better}
Fix a hull $K \subset \overline{\bD}$.  For any $x \in \bR$ such that $\dist(e^{\ii x}, K) > 0$, we have
\begin{align*}
\tfrac{1}{4} \, \sin \Big(\pi\min\Big\{\hmeas{\bD\smallsetminus K}(I^+_x; 0),\hmeas{\bD\smallsetminus K}(I^-_x; 0)\Big\}\Big)
\, \le \, | h_K'(x) |  \, \le \, e^{-\rcap(K)}  \, = \, \tfrac{1}{g_K'(0)} ,
\end{align*}
where $I^\pm_x$ are the two components of $\partial\bD \smallsetminus (K\cup\{e^{\ii x}\})$ adjacent to $e^{\ii x}$,
and $D_K := \hat{\bC}\smallsetminus (K \cup K^*)$.
\end{lem}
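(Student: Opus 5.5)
Since $h_K$ is real on the arc of $\partial\bD$ away from $K$, $|h_K'(x)| = |g_K'(e^{\ii x})|$ is the boundary density of the pushforward of harmonic measure from $0$. Both bounds will come from this identity together with a Poisson kernel/Herglotz representation of $\log(g_K(z)/z)$.

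\textbf{Upper bound.} Set $\phi(z) := \log\big(g_K(z)/z\big)$ on $\bD\smallsetminus K$. It is analytic, since $g_K$ has a simple zero at $0$, and $\phi(0) = \rcap(K)$. Its real part is $\log|g_K(z)| - \log|z|$. This vanishes on $\partial\bD\smallsetminus K$, and it is positive inside the domain, because $g_K^{-1}$ maps $\bD$ into $\bD$ and fixes $0$; hence $|g_K^{-1}(w)| \le |w|$ by Schwarz's lemma, i.e.\ $|z|\le|g_K(z)|$. At a boundary point $e^{\ii x}$ away from $K$ the function $\phi$ extends analytically by reflection, and the inward normal derivative of $\mathrm{Re}\,\phi$ is therefore nonnegative. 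Using $\partial_r \log|g_K| = |g_K'|$ on the circle (Cauchy--Riemann, since $|g_K|=1$ there), this reads $|g_K'(e^{\ii x})| - 1 \le 0$, which only gives $|h_K'|\le 1$. To obtain the sharper bound $e^{-\rcap(K)}$, I would instead apply the argument to the inverse map. Let $f := g_K^{-1}$ and $w := g_K(e^{\ii x}) \in \partial\bD$. Write $\log(f(u)/u) = -\int P\,d\mu$ as a Herglotz integral, where $\mu$ is a positive measure on the arc $J := g_K(\partial K\cap\bD)$ with total mass $\rcap(K)$; the sign comes from $\log|f(u)/u| \le 0$ with zero boundary values off $J$. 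Differentiating in the radial direction at $w \notin J$ gives
\[
|f'(w)| = 1 + \int_J \frac{d\mu(\zeta)}{|\zeta - w|^2}\cdot(\text{positive factor}) \ \ge\ 1 + \mathrm{const}\cdot\rcap(K).
\]
This still does not give $e^{\rcap}$ directly. Better: apply the radial Loewner equation. Grow $K$ by a Loewner chain $K_t$ and track $\partial_t \log|h_{K_t}'(x)|$. The radial Loewner equation gives $\partial_t \log h_t' = -\frac{1}{2\sin^2((h_t-\xi_t)/2)}\cdot(\ldots) \le -1$, with equality only in the limiting case. Integrating over $t\in[0,\rcap(K)]$ gives $|h_K'(x)| \le e^{-\rcap(K)}$. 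I will use this Loewner-flow route as the main argument, approximating a general hull by Loewner chains, which are dense.

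\textbf{Lower bound.} Here I would use the same flow with an explicit derivative: in the normalization $\dot h = \cot((h-\xi)/2)$ one has $\partial_t \log h_t' = -\frac{1}{2\sin^2((h_t(x)-\xi_t)/2)}$. The obstacle is that integrating this over time does not by itself produce a bound in terms of harmonic measure. I would try a direct geometric approach instead. By conformal invariance, $\hmeas{\bD\smallsetminus K}(I_x^\pm;0)$ equals the normalized lengths $|g_K(I_x^\pm)|/2\pi$, so the image point $w$ has distance at least $2\pi m$ along the circle from the image arc $J$ of $K$, where $m$ denotes the minimum of the two harmonic measures. On the complement of $J$, $f$ extends by reflection to a univalent map on $\hat{\bC}\smallsetminus(J\cup J^*)$, which contains a disk around $w$ of radius comparable to $\sin(\pi m)$. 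Koebe distortion on that disk (the quarter theorem and the growth bound), combined with the fact that $f$ maps into the bounded domain $\bD$, bounds $|f'(w)| \le 4/\sin(\pi m)$. Inverting gives $|h_K'(x)| = 1/|f'(w)| \ge \tfrac14\sin(\pi m)$.

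\textbf{Main obstacle.} Getting exactly the constant $\tfrac14\sin(\pi m)$ requires choosing the disk carefully. The natural choice is the disk centered at $w$ orthogonal to the circle, passing through the nearer endpoint of $J$, together with the half of it lying in $\bD$. Checking that Koebe's estimate yields the factor $4$ without extra losses is the step I would verify carefully.
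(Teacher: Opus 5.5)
\textbf{The upper bound is where your proposal fails, and it cannot be repaired, because that half of the statement is false.} Your key step, $\partial_t\log h_t'\le -1$, rests on an unexplained factor $(\ldots)$. With the normalization $\partial_t h_t(x)=\cot\big(\tfrac{h_t(x)-\xi_t}{2}\big)$, which is the one matching $g_t'(0)=e^t$, differentiating in $x$ gives exactly
$\partial_t\log h_t'(x)=-\tfrac12\csc^2\big(\tfrac{h_t(x)-\xi_t}{2}\big)$.
You write this formula yourself in the lower-bound paragraph. Since $\csc^2\ge 1$, it only yields $\partial_t\log h_t'\le-\tfrac12$. Integrating up to $t=\rcap(K)$ (after slit approximation) therefore gives $|h_K'(x)|\le e^{-\rcap(K)/2}=g_K'(0)^{-1/2}$.

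This weaker bound is sharp. Take the radial slit $K_t=[r_t,1]$ generated by the constant driver $\xi\equiv 0$, and take $x=\pi$. Since $\cot(\pi/2)=0$, one has $h_t(\pi)\equiv\pi$, hence $h_t'(\pi)=e^{-t/2}$ exactly. So $|h_K'(\pi)|=e^{-\rcap(K)/2}>e^{-\rcap(K)}$. As an independent check, $g_K=k^{-1}\circ(e^{\rcap(K)}k)$ with $k(z)=z/(1+z)^2$, and comparing the double poles at $z=-1$ gives $g_K'(-1)^2=e^{-\rcap(K)}$. Your Herglotz computation was pointing the same way. Carried out, it reads $|f'(w)|=1+2\int_J|\zeta-w|^{-2}\,\ud\mu(\zeta)$, which is about $1+\tfrac12\rcap(K)$ for a small hull near $-w$; this is incompatible with $|f'(w)|\ge e^{\rcap(K)}$.

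The paper's proof takes exactly your final route (density of single-slit maps, then the flow of $h_t'$) and contains the same slip. It writes $\partial_t h_t'(x)=-h_t'(x)\csc^2(\cdot)$, dropping the factor $\tfrac12$. What this route actually proves is the bound with exponent $-\rcap(K)/2$.

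The error propagates to the time-change comparison. With the corrected bound, the same argument gives $\partial_t\sigma^j\ge e^{nt-\sigma^j}$, hence $\sigma^j(t)\ge nt-\log n$ rather than $nt-\tfrac{\log n}{2}$. The stated constant genuinely fails. For two antipodal radial slits ($n=2$, constant drivers $0,\pi$), the map $z\mapsto z^2$ reduces to a single slit and gives $\sigma(t)=2t-\log 2+o(1)$. The later arguments appear to need only some $n$-dependent constant there.

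Your lower bound, by contrast, is the paper's argument: Koebe's quarter theorem for $g_K$ on the reflected domain, or equivalently for $f$ on $B(w,2\sin(\pi m))$. One correction is needed. The constant does not come from $f$ mapping into $\bD$, since the reflected map sends half that disk outside $\bD$. It comes from the image omitting $K$, which lies within distance $2$ of $e^{\ii x}$. Then $\tfrac14\cdot 2\sin(\pi m)\,|f'(w)|\le 2$, which gives exactly $|h_K'(x)|\ge\tfrac14\sin(\pi m)$, with no half-disk needed.
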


It is an easy consequence of the Julia-Wolff lemma (see~\cite[Proposition~4.13]{Pommerenke:Boundary_behaviour_of_conformal_maps})
that $|h_K'(x)| = | g_K'(e^{\ii x}) | \leq 1 \leq g_K'(0) = e^{\rcap(K)}$, so the map $g_K$ is contracting at the boundary and expanding at the origin.
For our results, however, we need a stronger upper bound. 
 
\begin{proof}
By Koebe $1/4$-theorem (e.g.~\cite[Theorem~2.10.6]{AIM:Elliptic_partial_differential_equations_and_quasiconformal_mappings_in_the_plane}), 
\begin{align*}
\frac{\dist(g_K(z), \partial S_K )}{4\,\dist(z, \partial D_K )}
\leq | g_K'(z) | 
, \qquad z \in D_K .
\end{align*}
Noting that for $z = e^{\ii x}$, we have $\dist(z, \partial D_K ) \leq 2$ and
\begin{align*}
\dist(g_K(e^{\ii x}), \partial S_K ) 
= 2 \, \sin \Big(\pi\min\Big\{\hmeas{\bD\smallsetminus K}(I^+_x; 0),\hmeas{\bD\smallsetminus K}(I^-_x; 0)\Big\}\Big) ,
\end{align*}
which gives the desired lower bound for $| g_K'(e^{\ii x}) | = |h_K'(x)|$.
Consider then the inverse map $f_K := g_K^{-1} \colon \bD \to \bD \smallsetminus K$.
Recall that single-slit mappings are dense in the space of all univalent functions on $\bD$~\cite[Theorem~3.2 and its proof]{Duren:Univalent_functions}, in the topology of uniform convergence on compact subsets of $\bD$. 
Hence, there exists a sequence $(f_{K_k})_{k \in \bN}$ of slit mappings approximating $f_K$ uniformly on compact subsets of $\bD$, where $K_k = \eta[0,T_k]$ are slits, $\eta \colon (0,T_k] \to \bD$ simple curves parameterized by capacity, 
so that $g_{\eta[0,t]}'(0) = e^t$, and 
\begin{align*}
\partial_t h_{\eta[0,t]}(x) = \; & \cot\bigg(\frac{h_{\eta[0,t]}(x)-\theta_t}{2}\bigg) , \qquad t \in [0,T_k] ,
\end{align*}
where $(\theta_t)_{t \in [0,T_k]}$ is their Loewner driving function. It thus suffices to prove that 
\begin{align}\label{eq:derivative_desired}
|h_{K_k}'(x)| = | g_{K_k}'(e^{\ii x}) | \, \le \, e^{-\rcap(K_k)} \, = \, e^{-T_k} .
\end{align}
Integrating the differential inequality for the spatial derivative of $h_t(x)$,
\begin{align*}
\partial_t h_t'(x) = \; & - h_t'(x) \, \csc^2\bigg(\frac{h_t(x)-\theta_t}{2}\bigg) \leq - h_t'(x) ,
\end{align*}
with initial condition $h_0'(x)=1$ yields $h_t'(x) \leq e^{-t}$.
Taking $t \uparrow T_k$, we obtain~\eqref{eq:derivative_desired}.
\end{proof}

\subsection{Radial multichords and Loewner evolution}
\label{subsec:radial_multichords}

\begin{df} \label{def:radial_multichord}
Fix distinct points $x^1, \ldots, x^n \in \partial \bD$. 
We call an $n$-tuple $\bgamma = (\gamma^1, \ldots, \gamma^n)$ such that $\gamma^1, \ldots, \gamma^n$ are curves\footnote{Here, we think of $\gamma^j$ as a continuous map from $(0,\infty)$ to $\overline{\bD}$, started at $\gamma^j(0)$ and ending at $\lim_{t\to \infty} \gamma^j(t)$. Note that the definition of a radial multichord allows the curves to intersect.} in $\overline{\bD}$ with  
$\gamma^j(0) = x^j$, and $\lim_{t\to \infty} \gamma^j(t)=0$ for each $j$, 
a~\emph{radial multichord} in $(\bD;x^1, \ldots, x^n)$. 
We naturally identify $\bgamma$ with the union $\cup_j \gamma^j \subset \overline{\bD}$. 
\end{df}

Given an $n$-tuple of radial curves on a time-window $[0,T]$, we obtain an $n$-radial multichord in the strict sense of \Cref{def:radial_multichord} by attaching to the tip $\gamma^j(T)$ of each curve, say, 
a hyperbolic line segment of $\bD\smallsetminus\big(\bigcup_{i=1}^n \gamma^i[0, T]\big)$ from $\gamma^j(T)$ to $0$ and parameterizing $\gamma^j$ such that $\lim_{t\to \infty} \gamma^j(t)=0$ for each $j$. 
We call the radial multichord $\bgamma[0,T]$ \emph{simple} if its each component $\gamma^j$ is injective, 
$\bgamma(0,T] \subset \bD$, and furthermore $\gamma^j[0,T] \cap \gamma^k[0,T]$ for all $j \neq k$.
We call the full $n$-radial multichord $\bgamma[0,\infty)$ \emph{simple} if $\bgamma[0,T]$ is simple for all $T \in [0,\infty)$.
Hence, simple radial multichords only intersect at their common endpoint at the origin.

Simple radial multichords can be generated using Loewner evolution. 
The (multiradial) \emph{Loewner chain} $(g_t)_{t \geq 0}$ with driving functions (``drivers'') $\exp(\ii \theta^1_t), \ldots, \exp(\ii \theta^n_t) \in \partial \bD$
and the \emph{common parameterization} is the family of solutions to the boundary value problem
\begin{align}\label{eqn:multiradial_Loewner_1common}
\partial_t g_t(z) = g_t(z) \sum_{j=1}^n \frac{e^{\ii \theta^j_t} + g_t(z)}{e^{\ii \theta^j_t} - g_t(z)},
\qquad g_0(z) = z ,
\qquad z \in \overline{\bD} , \quad t \geq 0 .
\end{align}
We identify the drivers with continuous functions $t \mapsto \btheta_t = (\theta^1_t, \ldots, \theta^n_t)$ from $[0,\infty)$ to $\chamber$, that is, the subset $\chamber \subset (\bR/2\pi\bZ)^n$ of the torus with periodic boundary conditions comprising the elements admitting representatives 
$\boldsymbol{\alpha} = (\alpha^1, \ldots, \alpha^n)$ which satisfy~\eqref{eq: torus ordering}.

We now fix a starting point $\btheta_0 \in \chamber$ throughout, and write $e^{\ii \btheta_0} = (e^{\ii \theta_0^1}, \ldots, e^{\ii \theta_0^n}) \in (\partial \bD)^n$. 
We are interested in the case where $(g_t)_{t \geq 0}$ is generated by a simple radial multichord $\bgamma \colon (0,T] \to \bD^n$ in $(\bD;e^{\ii \btheta_0})$, 
so that $g_t \colon \bD \smallsetminus K_t \to \bD$ are mapping-out functions of hulls $K_t = \bgamma[0,t]$ for each $t$. 
The parameterization in~\eqref{eqn:multiradial_Loewner_1common} guarantees that $g_t'(0) = e^{nt}$,
so that each component is locally growing at the same rate (see~\cite{Healey-Lawler:N_sided_radial_SLE} for more details),
and the total capacity of $\bgamma[0,t]$ equals $\rcap(\bgamma[0,t]) = \log g'_t(0) = nt$.
We let $\commonpaths[T] \ni \bgamma$ denote the space of such simple $n$-radial multichords, 
endowed with the metrizable topology obtained by parameterizing the curves $\bgamma$ by their common capacity-time and using the metric
\begin{align}\label{def:toplogy-with-param}
\dcommonpaths[T](\bgamma_1, \bgamma_2) 
:= \sup_{t \in [0,T]} \; \sum_{j=1}^n \big| \gamma_1^j (t) - \gamma_2^j(t) \big| 
, \qquad \bgamma_1, \bgamma_2 \in \commonpaths[T] .
\end{align}

For $\bgamma \in \commonpaths[T]$, we define the time-change function\footnote{If clear from context, we leave the dependence of $\bindeptime = \bindeptime_\bgamma$ on $\bgamma$ implicit.}  
$\bindeptime = \bindeptime_\bgamma = (\indeptime_{\bgamma}^j)_{j=1}^n \colon [0, T] \to [0, \infty)^n$,
\begin{align} \label{eqn:def-of-time-change-sigma}
\indeptime_\bgamma^j(t) := \rcap(\gamma^j[0,t]) , \qquad 1 \le j \le n , \; t \in [0,T] .
\end{align}
Thus, $t \mapsto \indeptime_{\bgamma}^j(t)$ corresponds to the capacity parameterization of the individual curve $\gamma^j$ in $\bD$, independently of the other curves. 
In contrast, for $\bindepT = (\indepT^1,\ldots,\indepT^n) \in [0,\infty)^n$, we let 
$$\Dindpaths[\bindepT] \ni \tilde\bgamma = (\tilde\gamma^1,\ldots,\tilde\gamma^n)$$ 
denote the product space of $n$ simple radial curves $\tilde\gamma^j \colon (0,\indepT^j] \to \bD$ such that $\tilde\gamma^j(0) = \exp(\ii \theta_0^j)$ with independent parameterizations by capacity: $s = \rcap(\tilde\gamma^j[0,s])$, for $s \in [0,\indepT^j]$.
If the curves in $\tilde\bgamma$ are distinct, we see that there exists $T \in [0,\infty)$ such that $\tilde\bgamma|_{\bindeptime_{\bgamma}[0,T]} \circ \bindeptime_{\bgamma} = \bgamma$ for some $\bgamma \in \commonpaths[T]$.
We endow the space $\Dindpaths[\bindepT]$ of independent curves with the product topology induced by the capacity-parameterized metrics 
\begin{align}\label{def:toplogy-inped-param}
\dindpaths[\bindepT](\tilde{\bgamma}_1, \tilde{\bgamma}_2) 
:= \sup_{(s^1,\ldots,s^n) \in [\boldsymbol{0},\bindepT]} \; \sum_{j=1}^n \big| \tilde{\gamma}_1^j (s^j) - \tilde{\gamma}_2^j(s^j) \big| 
, \qquad \tilde{\bgamma}_1, \tilde{\bgamma}_2  \in \Dindpaths[\bindepT] .
\end{align}

\subsection{Multiradial Schramm-Loewner evolution}

We now recall the setup from~\cite{Healey-Lawler:N_sided_radial_SLE} for $n$-radial $\SLE_\kappa$ processes. 

\begin{df} \label{def:n-radial_SLE_driving_functions}
Fix $\btheta_0 \in \chamber$. 
For each parameter $0<\kappa\leq 4$, 
\emph{$n$-radial $\SLE_\kappa$ with the common parameterization} started from $e^{\ii \btheta_0}\in (\partial \bD)^n$ 
is the random radial multichord $\bgamma$ for which the mapping-out functions $g_t \colon \bD \smallsetminus \bgamma[0,t] \to \bD$ satisfy  Equation~\eqref{eqn:multiradial_Loewner_1common}
with drivers obtained from the unique strong solution $\btheta_t := (\theta^1_t, \ldots, \theta^n_t)$ to the SDEs
\begin{align}\label{eq:SDEs}
\ud \theta^j_t = \; & 2 \, \sum_{\substack{1 \leq i \leq n \\[.1em] i\neq j}} \cot \bigg( \frac{\theta^j_t - \theta^i_t}{2} \bigg) \ud t 
+ \sqrt \kappa \ud W^j_t , \qquad \textnormal{for all } j \in \{1,\ldots,n\}, 
\end{align}
where $W^1_t, \ldots, W^n_t$ are independent standard Brownian motions. 
\end{df}

The above system~\eqref{eq:SDEs} of SDEs is, a priori, only defined up to the collision time
\begin{align*}
\blowuptime := \; & \inf \Big\{t \geq 0 \; \colon \; \min_{1 \leq i < j \leq n} \, \big| e^{\ii \theta^i_t} - e^{\ii \theta^j_t} \big| = 0 \Big\} ,
\end{align*}
which for the $n$-radial $\SLE_\kappa$ process is almost surely infinite 
(see~\cite[Corollary~2.5]{AHP:Large_deviations_of_DBM_and_multiradial_SLE}).

\begin{lemA}[{\cite[Lemma~3.2]{Healey-Lawler:N_sided_radial_SLE}}] 
\label{thm:independent-time-change}
Let $g_t \colon \bD \smallsetminus \bgamma[0,t] \to \bD$ be the Loewner chain of $\bgamma \in \commonpaths[T]$, and denote by 
$g_t^j \colon \bD \smallsetminus \gamma^j[0,t] \to \bD$ the mapping-out function of each $\gamma^j[0, t]$. 
Then, $g_{t, j} := g_t \circ (g^j_t)^{-1} \colon \bD \smallsetminus \hat{K}^j_t \to \bD$, 
where $\hat{K}^j_t := g^j_t \big(\bigcup_{i \neq j} \gamma^i[0,t]\big)$,
is a conformal bijection such that $g_{t, j}(0)=0$ and $g_{t, j}'(0)>0$.
Write $g^j_t(\gamma^j(t)) = \exp({\ii \xi^j_t})$, and $g_{t, j}(e^{\ii z}) =: e^{\ii h_{t, j}(z)}$ 
Then, we have
\begin{align}\label{eqn:indep-time-change}
\partial_t \indeptime_\bgamma^j(t) = \frac{1}{\big( h'_{t, j}(\xi^j_t) \big)^{2} }, \qquad  j \in \{1,\ldots,n\} , \; t \in [0,T] .
\end{align}
\end{lemA}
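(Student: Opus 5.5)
The plan is to compute the time derivative of the individual capacity $\indeptime^j_\bgamma(t) = \log (g^j_t)'(0)$ by comparing two Loewner chains. One is the common chain $(g_t)$, driven by all $n$ curves. The other is the single-curve chain $(g^j_t)$, driven by $\gamma^j$ alone. They are linked through the factorization $g_t = g_{t,j} \circ g^j_t$.

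\textbf{Step 1: the factor $g_{t,j}$.} First I would check that $g_{t,j}$ is a well-defined normalized conformal map. Since $\bgamma[0,t]$ is simple, $g^j_t$ maps $\bD \smallsetminus \bgamma[0,t]$ conformally onto $\bD \smallsetminus \hat K^j_t$. Here $\hat K^j_t$ is a hull, and its closure avoids the tip image $e^{\ii \xi^j_t}$, because the other curves stay at positive distance from $\gamma^j[0,t]$. The normalizations $g_{t,j}(0)=0$ and $g_{t,j}'(0) = e^{nt}/(g^j_t)'(0) > 0$ then follow directly from those of $g_t$ and $g^j_t$.

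\textbf{Step 2: localizing the growth.} Next I would use the standard local-growth principle for radial Loewner chains. Fix $t$ and a small $\epsilon>0$, and consider the small hull $A_\epsilon := g^j_t(\gamma^j[t,t+\epsilon])$, attached near $e^{\ii \xi^j_t}$. Its image under $g_{t,j}$ is the small hull $g_t(\gamma^j[t,t+\epsilon])$, attached near $e^{\ii \theta^j_t}$. From the common Loewner equation~\eqref{eqn:multiradial_Loewner_1common}, each curve contributes capacity at rate one near its own driver. So I would aim to show
\[
\rcap\big(g_t(\gamma^j[t,t+\epsilon])\big) = \epsilon + o(\epsilon).
\]
To justify this, I would split the vector field in~\eqref{eqn:multiradial_Loewner_1common} into the $j$-th term and the remaining terms. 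The remaining terms are smooth near $e^{\ii\theta^j_t}$, so over time $\epsilon$ they only produce a conformal perturbation of the hull, of size $1+O(\epsilon)$.

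\textbf{Step 3: capacity scaling.} The key fact is that for a conformal map $\phi$ defined near a boundary point $e^{\ii x}$ and preserving $\partial\bD$, a small hull $A$ of diameter $\delta$ at $e^{\ii x}$ satisfies
\[
\rcap(\phi(A)) = |\phi'(e^{\ii x})|^2 \rcap(A) + o(\delta^2).
\]
This holds because capacity behaves like half-plane capacity, which scales quadratically under dilation. Applying this with $\phi = g_{t,j}$, with $|g_{t,j}'(e^{\ii\xi^j_t})| = |h_{t,j}'(\xi^j_t)|$, and with $\rcap(A_\epsilon) = \indeptime^j(t+\epsilon) - \indeptime^j(t)$, gives
\[
\epsilon + o(\epsilon) = \big(h_{t,j}'(\xi^j_t)\big)^2 \big(\indeptime^j(t+\epsilon) - \indeptime^j(t)\big)\,(1+o(1)).
\]
Dividing by $\epsilon$ and letting $\epsilon\to0$ yields~\eqref{eqn:indep-time-change}.

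\textbf{Main obstacle.} The hard part is the uniform error control in the scaling estimate. I would prove it via the half-plane analogue: map to $\bH$ by a Möbius transformation, use the expansion $\mathrm{hcap}(\phi(A)) = \phi'(x)^2\,\mathrm{hcap}(A) + O(\delta^3)$, which follows from the Poisson-integral representation of hcap and a Taylor expansion of $\phi$, and then compare hcap with the radial $\rcap$ for hulls near the boundary. Continuity of $h_{t,j}'$ in $t$ should then give differentiability rather than only right-differentiability.
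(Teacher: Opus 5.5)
Your route differs from the paper's, and the gap is in the step you treat as easy. The paper, following Healey--Lawler, differentiates the factorization $g_t = g_{t,j}\circ g^j_t$ in $t$, in angular coordinates $g_t(e^{\ii z}) = e^{\ii h_t(z)}$ and $g^j_t(e^{\ii z}) = e^{\ii h^j_t(z)}$. It inserts the common Loewner equation and the single-curve equation run at speed $\partial_t\indeptime^j_\bgamma$. It then compares the coefficients of $(y-\xi^j_t)^{-1}$ in the variable $y$ of $h_{t,j}$. The $j$-th term of the common equation gives $2/h'_{t,j}(\xi^j_t)$, and the single-curve term gives $2h'_{t,j}(\xi^j_t)\,\partial_t\indeptime^j_\bgamma(t)$. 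The one extra input is that $\partial_t h_{t,j}$ stays bounded at $\xi^j_t$; this needs its own argument (Cauchy integral formula) because $(g_{t,j})_t$ is not a Loewner chain.

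In your plan the content sits in Step~2, and it is not proved. Step~2 is exactly the lemma at time $0$ for the mapped-out configuration $g_t(\bgamma(t+\cdot))\in\commonpaths[T-t]$, where $h_{0,j}$ is the identity. The $k\neq j$ terms being smooth near $e^{\ii\theta^j_t}$ is trivial. It does not give $g_t(\gamma^j[t,t+\epsilon]) = \Phi_\epsilon(A_\epsilon)$ with $\rcap(A_\epsilon)=\epsilon$ and $\Phi_\epsilon$ near the identity. The time-$\epsilon$ map of a nonautonomous flow whose field is a sum is not the composition of the separate flows. Also, the chain driven by $\theta^j_{t+\cdot}$ alone grows a different curve. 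The factorization that does exist is $G_\epsilon = G_{\epsilon,j}\circ G^j_\epsilon$, with $G_s := g_{t+s}\circ g_t^{-1}$ and $G^j_s$ mapping out $g_t(\gamma^j[t,t+s])$. But it runs $G^j$ in the unknown individual capacity time, so using it requires controlling $G_{s,j}$ near the tip, which is the paper's ingredient again.

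What is missing is an argument tying the unit coefficient of the pole at $e^{\ii\theta^j_t}$ to the capacity $c_j(\epsilon)$ of that particular piece. Besides pole matching, a far-field comparison works. Take $e^{\ii\phi}$ at distance $r$ from $e^{\ii\theta^j_t}$, with $r\to0$ slowly as $\epsilon\to0$. By the ODE, its displacement under $G_\epsilon$ is $\epsilon\cot(\frac{\phi-\theta^j_t}{2})(1+o(1))+O(\epsilon)$. Via the Herglotz representation of $G^j_\epsilon$ it is $c_j(\epsilon)\cot(\frac{\phi-\theta^j_t}{2})(1+o(1))+O(\epsilon)$, since $G_{\epsilon,j}$ maps out a hull of capacity at most $n\epsilon$ far from $e^{\ii\phi}$. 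Multiplying by $r$ and letting $r\to0$ gives $c_j(\epsilon)=\epsilon(1+o(1))$.

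There is a secondary issue in Step~3. An absolute error $o(\delta^2)$, or $O(\delta^3)$, is $o(\epsilon)$ only if $\delta^2=O(\epsilon)$, respectively $\delta^3=o(\epsilon)$. This fails for general $\bgamma\in\commonpaths[T]$, whose drivers are merely continuous. For example, a curve leaving the circle with high-order tangency has an increment of capacity of order $\epsilon$ but $\delta^3\gg\epsilon$. You need the relative form $\rcap(\phi(A)) = |\phi'(e^{\ii x})|^2\,\rcap(A)\,(1+O(\delta))$. This is true and elementary. It follows from $\rcap(A) = \frac{1}{2\pi}\int_0^{2\pi}\log\frac{1}{|g_A^{-1}(e^{\ii\psi})|}\,\mathrm{d}\psi$ and $\log\frac{1}{|\phi(w)|} = |\phi'(e^{\ii x})|\log\frac{1}{|w|}\,(1+O(|w-e^{\ii x}|))$. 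You also need a Cauchy estimate for the induced boundary map $g_{\phi(A)}\circ\phi\circ g_A^{-1}$ on the arc. With this, Step~3 is sound, and your approach has the merit of using $g_{t,j}$ only at a fixed time. That merit only counts once Step~2 is actually supplied.
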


The proof of the above result in~\cite{Healey-Lawler:N_sided_radial_SLE} uses Loewner differential equation for $g_{t, j}$. 
However, the sequence of maps $(g_{t, j})_{t \ge 0}$ \emph{is not a Loewner chain}. Indeed, the maps $g_t$ and $g^j_t$ come from Loewner chains, 
but the map $g_{t, j} = g_t \circ (g^j_t)^{-1}$ first changes the domain and then applies a Loewner transform to the new domain. 
Taking the inverse $(g_{t,j})^{-1} = g^j_t\circ g_t^{-1}$ makes the problem more transparent: the map $h^{-1}_t$ generates a growing collection of curves, 
but $g^j_t$ maps out only one of the curves, deforming the other curves in a time-dependent way. 
For completeness, we give a short fix to the proof of \Cref{thm:independent-time-change}.

\begin{proof}[Proof of \Cref{thm:independent-time-change}]
Examining the proof in~\cite{Healey-Lawler:N_sided_radial_SLE}, the only property of $x \mapsto \partial_t h_{t, j}(x)$ used is that it is bounded away from $\pm\infty$ at $x=\xi^j_t$. 
This fact follows easily from the Cauchy integral formula, e.g., similarly as in the analysis detailed in~\cite[Proposition~3.12]{AHP:Large_deviations_of_DBM_and_multiradial_SLE}.
\end{proof}

The next result gives explicit bounds on the time-change from the common parameterization to the independent parameterization. 
It is interesting also on its own right: the time-change $\indeptime_\bgamma^j$ is uniformly comparable to $nt$
for any $j$ and any curves, with a correction for the lower bound that only depends on the number of curves. 

\begin{prop}\label{thm:time-change-properties-better}
The map $\bgamma \mapsto \bindeptime_\bgamma$ is continuous from $\commonpaths[T]$ to $L^\infty([0,T], [0,\infty)^n)$, and
\begin{align}\label{eqn:time-change-comparison}
nt - \tfrac{\log(n)}{2} \le \indeptime_\bgamma^j (t) < nt , \qquad n \geq 2, \; j \in \{1,\ldots,n\} , \; t \in [0,T] .
\end{align}
\end{prop}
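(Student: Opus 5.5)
The plan is to obtain the two inequalities in~\eqref{eqn:time-change-comparison} from the factorization of \Cref{thm:independent-time-change} combined with \Cref{thm:h-derivative-bound-better}, and to obtain continuity from the continuity of capacity under Carathéodory kernel convergence. Throughout I take $t>0$ for the strict inequality; at $t=0$ both sides vanish, so only the non-strict bound holds there.

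\emph{Upper bound.} By \Cref{thm:independent-time-change}, $g_t = g_{t,j}\circ g^j_t$. Differentiating at the origin gives
\[
g_t'(0) = g_{t,j}'(0)\, (g^j_t)'(0),
\qquad\text{that is,}\qquad
nt = \rcap(\hat K^j_t) + \indeptime^j_\bgamma(t).
\]
For $t>0$ and $n\ge 2$, the hull $\hat K^j_t = g^j_t\big(\bigcup_{i\ne j}\gamma^i[0,t]\big)$ is nonempty and meets $\bD$. Hence $g_{t,j}$ is not a rotation, and the strict Schwarz lemma gives $\rcap(\hat K^j_t)>0$. This yields $\indeptime^j_\bgamma(t)<nt$.

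\emph{Lower bound.} I apply \Cref{thm:h-derivative-bound-better} to the hull $\hat K^j_t$ at the point $x=\xi^j_t$. This is legitimate because $e^{\ii\xi^j_t}$ stays at positive distance from $\hat K^j_t$, the curves being disjoint away from the origin. The lemma gives $|h'_{t,j}(\xi^j_t)| \le e^{-\rcap(\hat K^j_t)}$. By \eqref{eqn:indep-time-change},
\[
\partial_t \indeptime^j_\bgamma(t) \;\ge\; e^{2\rcap(\hat K^j_t)} \;=\; e^{2(nt-\indeptime^j_\bgamma(t))}.
\]
Now set $u(t):=nt-\indeptime^j_\bgamma(t)$. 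Then $u(0)=0$ and $u'(t)\le n-e^{2u(t)}$. Whenever $u>\tfrac{\log n}{2}$ we have $u'<0$. A standard comparison argument then shows that $u$ can never exceed the equilibrium $\tfrac{\log n}{2}$: look at the first time $u$ exceeds it and derive a contradiction. This gives $\indeptime^j_\bgamma(t)\ge nt-\tfrac{\log n}{2}$. The argument only uses that $t\mapsto\indeptime^j$ is absolutely continuous, which follows from \eqref{eqn:indep-time-change}.

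\emph{Continuity.} Let $\bgamma_k\to\bgamma$ in $\dcommonpaths[T]$. For each fixed $t$, the simple curves $\gamma^j_k[0,t]$ converge uniformly to $\gamma^j[0,t]$. I then want to show that $\bD\smallsetminus\gamma^j_k[0,t]$ converges to $\bD\smallsetminus\gamma^j[0,t]$ in the sense of Carathéodory kernel convergence with respect to $0$. Granting this, the Carathéodory kernel theorem gives $(g^j_{t,k})'(0)\to (g^j_t)'(0)$, so $\indeptime^j_{\bgamma_k}(t)\to\indeptime^j_\bgamma(t)$ pointwise. The functions $t\mapsto\indeptime^j_{\bgamma_k}(t)$ are nondecreasing and the limit is continuous, so pointwise convergence upgrades to uniform convergence on $[0,T]$, by the Pólya/Dini-type argument for monotone functions.

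The step I expect to be the main obstacle is exactly this kernel convergence. Uniform convergence of curves controls Hausdorff distance, but kernel convergence needs the complements not to pinch off. For slits this works: every point of $\bD\smallsetminus\gamma^j[0,t]$ has a neighbourhood, connected to $0$, that avoids $\gamma^j_k[0,t]$ for large $k$. Conversely, any limit of the kernels contains no interior point of the limit slit, since the slit itself has empty interior. I would verify these two directions directly, using that $\gamma^j[0,t]$ is a compact set with connected complement.
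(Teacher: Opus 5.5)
Your proof is correct and follows the paper's route: the factorization $g_t=g_{t,j}\circ g^j_t$ (i.e.\ monotonicity of capacity) for $\indeptime^j_\bgamma(t)\le nt$, then \Cref{thm:h-derivative-bound-better} with \eqref{eqn:indep-time-change} to get $\partial_t\indeptime^j_\bgamma(t)\ge e^{2(nt-\indeptime^j_\bgamma(t))}$, and Carath\'eodory convergence of the single-curve complements for continuity. (Instead of your comparison argument, the paper integrates the inequality exactly via $e^{2\indeptime^j_\bgamma}$, obtaining the slightly sharper $2\indeptime^j_\bgamma(t)\ge\log(e^{2nt}+n-1)-\log n$.) Your two additions fill real gaps in the paper's proof, which only yields $\indeptime^j_\bgamma(t)\le nt$ (with equality at $t=0$) and only checks convergence pointwise in $t$: your strict Schwarz step and your P\'olya-type monotone upgrade to uniform convergence complete it; the one slip is in the kernel step, where the limit domain avoids $\gamma^j[0,t]$ because each $\gamma^j(s)$ is the limit of $\gamma^j_k(s)\in\gamma^j_k[0,t]$, so no disk around it eventually avoids $\gamma^j_k[0,t]$, and not because the slit has empty interior.
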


The upper bound $\indeptime_\bgamma^j(t) = nt$ in~\eqref{eqn:time-change-comparison} would be achieved if all $n$ curves exactly coincided. 
The lower bound in~\eqref{eqn:time-change-comparison} is a consequence of the upper bound in \Cref{thm:h-derivative-bound-better}.

\begin{proof}
Consider a sequence $(\bgamma_k)_{k \in \bN}$ in $\commonpaths[T]$ converging to $\bgamma $ in $\commonpaths[T]$
Since these are simple disjoint curves, 
(recalling the definition~\eqref{def:toplogy-with-param} of the parameterized topology on $\commonpaths[T]$)
the corresponding inverse mapping-out functions 
also converge uniformly on compact subsets of $\bD$; thus their derivatives at the origin also converge, and hence the capacities converge:
\begin{align*}
\lim_{k \to \infty}\indeptime_{\bgamma_k}^j(t)
= \lim_{k \to \infty} \rcap( \gamma_k^j[0,t])
=  \rcap( \gamma^j[0,t])
= \indeptime_{\bgamma}^j(t) , \qquad j \in \{1,\ldots,n\} , \; t \in [0,T] .
\end{align*}

This shows the continuity of $\bgamma \mapsto \bindeptime_\bgamma$. 
The lower bound in~\eqref{eqn:time-change-comparison} is immediate from the monotonicity of capacity: 
$nt = \rcap( \bgamma[0,t]) \geq \rcap (\gamma^j [0,t]) = \indeptime^j(t) \geq 0$.
For the upper bound, we invoke \Cref{thm:h-derivative-bound-better} together with \Cref{thm:independent-time-change}, to obtain
\begin{align*}
\partial_t \indeptime_\bgamma^j(t) = \big( h'_{t, j}(\xi^j_t) \big)^{-2} 
\ge ( g'_{t,j}(0))^{2} = \Big(\tfrac{g_t'(0)}{(g^j_t)'(0)}\Big)^{2} = \exp\big(2nt - 2\indeptime_\bgamma^j(t)\big).
\end{align*}
Integrating this differential inequality with initial condition $\indeptime_\bgamma^j(0) = 0$ yields
\begin{align}\label{eqn:time-change-comparison-help}
2\indeptime_\bgamma^j(t) \ge \log(e^{2nt}+n \, -1 ) - \log(n) \ge 2nt - \log(n),
\end{align}
which gives the sought upper bound in~\eqref{eqn:time-change-comparison}.
\end{proof}

Using \Cref{thm:time-change-properties-better}, in the next result (\Cref{thm:uniform-lipschitz-time-change})
we will control the time-change function 
$\bindeptime_\bgamma = (\indeptime_{\bgamma}^j)_{j=1}^n \colon [0, T] \to [0, \infty)^n$
defined in~\eqref{eqn:def-of-time-change-sigma}. 
Throughout, we denote
\begin{align} \label{eq:Cdelta}
\commonpaths[T](\delta) := \Big\{ \bgamma \in \commonpaths[T] \cond \dist(\gamma^j,\gamma^i) > \delta \textnormal{ for all } 0 \le i < j \le n \Big\} \; \subset \; \commonpaths[T] , \qquad \delta > 0 .
\end{align}
We endow $\commonpaths[T](\delta)$ with the relative topology induced by $(\commonpaths[T],\dcommonpaths[T])$
with the metric~\eqref{def:toplogy-with-param}. 

\begin{lem}\label{thm:uniform-lipschitz-time-change}
Fix a starting point $\btheta_0 \in \chamber$. 
For each $\delta, \varepsilon > 0$, there exists a constant $L=L(\btheta_0,n,\delta, \varepsilon) \in [1,\infty)$ 
such that for any $\bgamma \in \commonpaths[T](\delta)$, $T \ge 0$, and $j \in \{1,\ldots, n\}$, the time-change $\indeptime_\bgamma^j$ is $L$-biLipschitz on $[0, T-\varepsilon]$.
\end{lem}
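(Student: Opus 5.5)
The plan is to work directly with the formula of \Cref{thm:independent-time-change}, $\partial_t \indeptime_\bgamma^j(t) = (h'_{t,j}(\xi^j_t))^{-2}$. The strategy is to show that $h'_{t,j}(\xi^j_t)$ is bounded above and below by positive constants depending only on $(\btheta_0,n,\delta,\varepsilon)$, uniformly in $t \in [0,T-\varepsilon]$, $T$ and $\bgamma \in \commonpaths[T](\delta)$. Integrating then gives the biLipschitz property with $L$ equal to the reciprocal square of the lower bound. Throughout, $\indeptime^j_\bgamma$ is absolutely continuous and increasing, so bounds on the derivative suffice.

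\emph{Lower Lipschitz bound.} This is immediate from the upper bound in \Cref{thm:h-derivative-bound-better} applied to the hull $\hat K^j_t$:
\begin{align*}
|h'_{t,j}(\xi^j_t)| \le e^{-\rcap(\hat K^j_t)} \le 1 .
\end{align*}
Hence $\partial_t \indeptime^j_\bgamma \ge 1$. Here $\dist(e^{\ii \xi^j_t},\hat K^j_t)>0$ holds because the curves are disjoint away from the origin. (In fact the proof of \Cref{thm:time-change-properties-better} even gives $\partial_t\indeptime^j_\bgamma \ge e^{2nt-2\indeptime^j_\bgamma(t)} \ge 1$.)

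\emph{Upper Lipschitz bound.} By the lower bound in \Cref{thm:h-derivative-bound-better}, it suffices to find $c=c(\btheta_0,n,\delta,\varepsilon)>0$ with
\begin{align*}
\hmeas{\bD\smallsetminus \hat K^j_t}(I^\pm_{\xi^j_t};0) \ge c .
\end{align*}
By conformal invariance under $(g^j_t)^{-1}$, this harmonic measure equals the probability that a Brownian motion started from $0$ in $\bD\smallsetminus \bgamma[0,t]$ exits through the corresponding side of $\gamma^j[0,t]$, including the arc of $\partial\bD$ up to the neighbouring curve's base point. Here ``side'' means the boundary piece between the tip $\gamma^j(t)$ and the nearest other curve $\gamma^{j\pm1}$, on each of the two sides.

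I plan to lower-bound this exit probability using the extension $\gamma^j[t,t+\varepsilon]$, which exists since $t\le T-\varepsilon$. In the image domain, $g_t(\gamma^j[t,t+\varepsilon])$ is a curve from $e^{\ii\theta^j_t}$ whose capacity increment is of order $\varepsilon$, and it is still $\delta$-separated from the other curves in the original domain. Two consequences should follow:
\begin{itemize}
\item the capacity increment forces the harmonic measure (from $0$) seen near the tip to be nondegenerate;
\item the $\delta$-separation, via a Beurling-type estimate in a $\delta/2$-tube around $\gamma^j$, prevents the other curves from pinching off either side of the tip.
\end{itemize}
Concretely, I would argue by contradiction and compactness. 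Suppose the harmonic measure of one side tends to zero along a sequence. Renormalize by the maps $g_t$, so that the configuration is always the unit disk with $n$ growing curves of capacity $n\varepsilon$ started from $e^{\ii\btheta_t}$. The $\delta$-separation in the original domain should transfer to uniform non-degeneracy of the gaps $\theta^{j\pm1}_t-\theta^j_t$ measured in harmonic measure. A vanishing gap would force a pinch, contradicting separation, while uniformity in $T$ and $t$ comes from working in the renormalized frame.

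The main obstacle is exactly this uniform harmonic-measure lower bound. In particular, the $\delta$-separation lives in the original domain near the origin, where conformal distortion is large for big $t$, and it must be converted into a scale-invariant statement after mapping by $g_t$. I would first try to do this via extremal length: the $\delta$-separated strip between $\gamma^j$ and $\gamma^{j\pm1}$ must be crossed by any curve separating the tip side from $0$. Combined with the future piece $\gamma^j[t,t+\varepsilon]$, this should give a conformally invariant lower bound depending only on $\delta$, $\varepsilon$, $n$ and $\btheta_0$. The dependence on $\btheta_0$ enters for small $t$, through the initial gaps.
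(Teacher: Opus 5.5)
Your lower Lipschitz bound is correct, and cleaner than the paper's $T_0$-dependent one: $\partial_t\indeptime^j_\bgamma(t)\ge e^{2nt-2\indeptime^j_\bgamma(t)}\ge 1$. The gap is in the upper bound, which is the whole content of the lemma. You only say that the uniform lower bound on $\hmeas{\bD\smallsetminus\hat K^j_t}(I^\pm_{\xi^j_t};0)$ ``should follow'', and the route you sketch targets a difficulty that does not exist.

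You missed that $\commonpaths[T](\delta)=\emptyset$ for $T>T_0(\delta)$. By \Cref{thm:time-change-properties-better}, $\indeptime^j_\bgamma(T)\ge nT-\frac{\log n}{2}$, so by Koebe each $\gamma^j[0,T]$ comes within $4\sqrt{n}\,e^{-nT}$ of the origin. Any two curves are then closer than $\delta$. So only bounded times matter, and there is no large-$t$ distortion to control. The renormalization by $g_t$ is therefore unnecessary, and it could not carry an absolute $\delta$-separation into a scale-invariant statement anyway. Moreover, the gaps $\theta^{j\pm1}_t-\theta^j_t$ only bound the relevant harmonic measures from above, because in the $g_t$-frame they also count the facing side of the neighbouring curve. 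Their non-degeneracy would not suffice.

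Your two bullets also miss the real degeneration. The $\delta/2$-tube around $\gamma^j$ rules out pinches between $\gamma^j$ and another curve. But $I^\pm_{\xi^j_t}$ can have tiny harmonic measure when a neighbouring curve, staying $\delta$ away from $\gamma^j$, nearly closes a pocket around $\gamma^j$ against $\partial\bD$ (or against itself). A Beurling estimate near $\gamma^j$ says nothing about whether Brownian motion from $0$ reaches the tube at all.

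What excludes the pocket is the future piece of the enclosed curve. For $T\le T_0$, the paper takes a maximizing sequence $\bgamma_k\in\commonpaths[T_k](\delta)$ and extracts Hausdorff and Carath\'eodory limits $K^i$ of $\gamma^i_k[0,T_k-\varepsilon]$. It then shows $K^i\not\subset K^j$ for $i\neq j$. Otherwise the piece $\gamma^i_k[T_k-\varepsilon,T_k]$, whose capacity increment in the mapped-out domain is bounded below by a multiple of $\varepsilon$ (by the lower Lipschitz bound), must leave the limiting hull. It can only do so through a gap of size $r_k\to 0$ next to $\gamma^j_k$, so it comes within $2r_k$ of $\gamma^j_k$, contradicting $\delta$-separation. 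Hence the $K^i$ are pairwise $\delta$-separated, and the arcs of $\partial\bD\smallsetminus\bigcup_i K^i$ adjacent to $K^j$ have positive harmonic measure from $0$. This yields the constant.

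Your first bullet points in this direction, but the escape-through-the-pinch mechanism, which is the crux, is missing from your proposal.
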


\begin{proof}
Without loss of generality, let $n \geq 2$. 
By \Cref{thm:time-change-properties-better}, for each $\bgamma \in \commonpaths[T]$ we have 
\begin{align*}
\indeptime_\bgamma^j (T) \ge nT - \tfrac{\log(n)}{2} \quad \xrightarrow{T \to \infty} \quad +\infty . 
\end{align*}
Hence, by Koebe distortion theorem (e.g.~\cite[Theorem~2.10.8]{AIM:Elliptic_partial_differential_equations_and_quasiconformal_mappings_in_the_plane}) and the definition~\eqref{eqn:def-of-time-change-sigma}, we have 
$\dist(0, \gamma^j[0, T]) \leq 4 \exp(-\rcap(\gamma^j[0,T])) = 4 \exp(-\indeptime_\bgamma^j(T)) \leq 4 e^{-T} \to 0$ as $T \to \infty$, 
so there exists $T_0 = T_0(\delta) > 0$ such that $\dist(\gamma^j[0, T], \gamma^i[0, T]) < \delta$ for every $0 \le i < j \le n$,
whence $\underset{T  > T_0}{\bigcup} \, \commonpaths[T](\delta) = \emptyset$.  
It thus suffices to show the claim for every $\bgamma \in \underset{T \in [0, T_0]}{\bigcup} \, \commonpaths[T](\delta)$.

Given $\bgamma \in \commonpaths[T](\delta)$ for $T \le T_0$, combining 
\Cref{thm:independent-time-change}, \Cref{thm:h-derivative-bound-better}~\&~\Cref{thm:time-change-properties-better}, 
\begin{align*}
e^{2 (1 - n) T_0}
\le \; & \exp \big(2\indeptime_\bgamma^j(t) - 2nt \big)
= \Big(\tfrac{(g^j_t)'(0)}{g_t'(0)}\Big)^{2} = \big( g_{t, j}'(0) \big)^{-2}
\le \big( h'_{t, j}(\xi^j_t) \big)^{-2} 
\\
= \partial_t\indeptime_\bgamma^j (t) 
\; \le \; &  \bigg( \tfrac{1}{4} \, \sin \Big(\pi\min\Big\{\hmeas{\bD\smallsetminus \hat{K}^j_t}(I^+_{\xi^j_t}; 0),\hmeas{\bD\smallsetminus \hat{K}^j_t}(I^-_{\xi^j_t}; 0)\Big\}\Big) \bigg)^{-2} 
=: H_\bgamma^j (t) 
, \qquad t \leq T .
\end{align*}
This already proves the Lipschitz lower bound $\indeptime_\bgamma^j(t)-\indeptime_\bgamma^j(s) \le e^{-2nT_0}(t-s)$ for $0 \le s < t \le T$. For each $k \in \bN$, take $\bgamma_k \in \commonpaths[T_k](\delta)$ such that $L^j_k := L^j_{\bgamma_k} \xrightarrow{k \to \infty} L^j$, where
\begin{align*}
L^j  = L^j(n,\delta, \varepsilon) := \sup_{T \in [0, T_0]}\sup_{\bgamma \in \commonpaths[T](\delta)} L_\bgamma^j , 
\qquad \textnormal{and} \qquad
L_\bgamma^j := \sup_{t \in [0, T-\varepsilon]} \partial_t \indeptime_\bgamma^j(t) .
\end{align*}
After passing to a subsequence, we may assume that each $(\gamma_k^i[0, T_k-\varepsilon])_{k \in \bN}$ converges to 
a compact set $\tilde K^i$ with respect to the Hausdorff metric, 
and to a hull $K^i$ with respect to the Carath\'eodory topology, and that $(\bgamma_k[0, T_k-\varepsilon])_{k \in \bN}$ converges to a hull $K$ in the Carath\'eodory topology. 
By~\cite[Lemma 2.1]{Abuzaid-Peltola:Large_deviations_of_radial_SLE0}, 
the domain $\bD\setminus K^i$ (resp.~$\bD\setminus K$) is the connected component of $\bD\setminus\tilde K^i$ (resp.~$\bD\setminus \bigcup_{j=1}^n \tilde K^j$) 
containing $0$, we have $K^i \subset K$ for every $1 \le i \le n$. 
Assuming that $K^i \not\subset K^j$ for any distinct $i, j \in \{1, \ldots, n\}$, we find that 
\begin{align*}
\dist(K^{i_1}, K^{i_2}) = \dist(\tilde K^{i_1}, \tilde K^{i_2}) = \lim_{k \to \infty}\dist(\gamma_k^{i_1}, \gamma_k^{i_2}) \ge \delta, \qquad 1 \le i_1 < i_2 \le n 
\end{align*}
(by continuity of $\dist(\cdot)$ with respect to the Hausdorff metric). 
In particular, $K^{i_1}$ and $K^{i_2}$ are disjoint, and consequently $K = \bigcup_{i=1}^n K^i$.
Let $J^\pm_{K^j}$ be the two components of $\partial\bD \smallsetminus K$ adjacent to $K^j$. 
By conformal invariance of the harmonic measure, for any $k \in \bN$ and $t \in [0, T]$ we have
\begin{align*}
H_{\bgamma_k}^j (t) 
\leq \bigg( \tfrac{1}{4} \, \sin \Big(\pi\min\Big\{\hmeas{\bD\smallsetminus K}(J^+_{K^j}; 0),\hmeas{\bD\smallsetminus K}(J^-_{K^j}; 0)\Big\}\Big) \bigg)^{-2}
=: H^j , 
\end{align*}
which shows that $\smash{L_k^j \xrightarrow{k \to \infty} L^j \le H^j = H^j(\btheta_0,\delta, \varepsilon)}$.
It thus remains to show that $K^i \not\subset K^j$ for any distinct $i, j \in \{1, \ldots, n\}$. 

Towards a contradiction, assume that $K^i \subset K^j$ for some $i \ne j$, so that in particular, the set $\tilde K^j$ disconnects $e^{\ii \theta_0^i}$ from the origin in $\bD$. 
Then, there exists points $z_k \in \bD$ and radii $r_k \searrow 0$ such that $\gamma^j_k[0, T_k-\varepsilon] \cup B_{z_k}(r_k)$ disconnects $e^{\ii\theta_0^i}$ from the origin in $\bD$; note that this implies that $\dist(z_k, \gamma_k^j[0, T_k-\varepsilon]) < r_k$. Denote by $K^j_k$ the hull whose complement is the component of $\bD\setminus(\gamma^j_k[0, T_k-\varepsilon] \cup B_{z_k}(r_k))$ containing the origin. 
Since $r_k \searrow 0$, we deduce the Carath\'eodory convergence $K^j_k \xrightarrow{k \to \infty} K^j$ of the hulls.

Passing to a further subsequence, we may assume that $(\gamma_k^i[0, T_k])_{k \in \bN}$ converges to a compact set $\tilde F^i \supset \tilde K^i$ in the Hausdorff metric, and that $(\bgamma^k[0, T_k-\varepsilon] \cup \gamma_k^i[0, T_k])_{k \in \bN}$ converges to a hull $F \supset K$ in the Carath\'eodory topology. Denote by $\boldsymbol\eta \in \commonpaths[\varepsilon]$ the mapped out curves $\boldsymbol\eta(t) := g_{\bgamma_k[0, T_k-\varepsilon]}(\bgamma(T_k-\varepsilon+t))$. By the Lipschitz lower bound, we have 
\begin{align*}
\rcap\big(\bgamma_k[0, T_k-\varepsilon] \cup \gamma_k^i[0, T_k]\big) - \rcap\big(\bgamma_k[0, T_k-\varepsilon]\big) 
= \rcap\big(\eta_k^i[0, \varepsilon]\big) 
= \indeptime_{\boldsymbol{\eta}_k}^i(\varepsilon) \ge e^{-2nT_0}\varepsilon > 0 ,
\end{align*}
uniformly on the shapes of the curves. Taking the limit $k \to \infty$ shows that $\rcap(F) > \rcap(K)$, so $F \ne K$.
Since $\tilde K^j \subset K^j \subset K$ for every $1 \le j \le n$, and by~\cite[Lemma 2.1]{Abuzaid-Peltola:Large_deviations_of_radial_SLE0} $\bD\setminus F$ is the component of $\bD\setminus(\bigcup_{j=1}^n \tilde K^j \cup \tilde F^i)$ containing origin, we must have $\tilde F^i \not\subset K$. Fix a point $z \in \tilde F^i\setminus K$, and denote $u := \dist(z, K) > 0$. By Carath\'eodory convergence $K^j_k \xrightarrow{k \to \infty} K^j \not\ni z$, for sufficiently large $k \in \bN$ we have $\dist(z, K^j_k) > \frac{u}{2}$, while by Hausdorff convergence we have $\dist(z, \gamma_k^i[0, T_k]) < \frac{u}{4}$ for sufficiently large $k \in \bN$. For large $k \in \bN$ for which we furthermore have $r_k < \frac{u}{4}$, 
the curve $\gamma_k^j[0, T_k]$ connects $e^{\ii \theta_0^i} \in K^j_k$ to $B_z(\frac{u}{4})$ disjoint from $K^j_k$. We conclude that $\gamma_k^i[0, T_k]$ must intersect the boundary of $K_k^j$, so
\begin{align*}
\gamma_k^i[0, T_k] \cap \big(\gamma_k^j[0, T_k-\varepsilon] \cup \overline{B_z(r_k)}\big) \supset \gamma_k^i[0, T_k] \cap \partial K^j_k \ne \emptyset.
\end{align*}
As $\gamma_k^i[0, T_k]$ and $\gamma_k^j[0, T_k -\varepsilon]$ are disjoint, there exists $w_k \in \gamma_k^i[0, T_k] \cap \overline{B_{z_k}(r_k)}\ne \emptyset$, and 
\begin{align*}
\dist(w_k, \gamma_k^j[0, T_k-\varepsilon]) \le \dist(w_k, z_k) + \dist(z_k, \gamma_k^j[0, T_k-\varepsilon]) \le 2r_k \xrightarrow{k \to \infty} 0.
\end{align*}
Since $w_k \in \gamma_k^i[0, T_k]$, this contradicts the assumption $\bgamma_k \in \commonpaths[T](\delta)$. 
By contradiction, we conclude that $K^i \not \subset K^j$ for any distinct $i, j \in \{1, \ldots, n\}$, which finishes the proof.
\end{proof}

\subsection{Topological lemmas for radial curves}
\label{subsec:topo}

Consider the collection of configurations of independently parametrized curves obtained by extending common-capacity-parameterized curves in $\commonpaths[T]$ to times $\bindepT = (\indepT^1,\ldots,\indepT^n)$ thus:
\begin{align*}
\Dextpath{T}{\bindepT} := \Big\{ \tilde\bgamma \in \Dindpaths[\bindepT] \cond 
\textnormal{$\tilde\bgamma|_{\bindeptime_{\bgamma}[0,T]} \circ \bindeptime_{\bgamma} = \bgamma$ for some $\bgamma \in \commonpaths[T]$ and $T \in [0,\infty)$}
\Big\} \; \subset \; \Dindpaths[\bindepT] .
\end{align*}
We define the projection $\prCommonTime{T} \colon \Dextpath{T}{\bindepT} \to \commonpaths[T]$ as $\prCommonTime{T}(\tilde\bgamma) := \bgamma$.
Recalling~\eqref{eq:Cdelta}, we denote
\begin{align*}
\Dextpath{T}{\bindepT}(\delta) := \Big\{ \tilde\bgamma \in \Dextpath{T}{\bindepT} \cond \prCommonTime{T}(\tilde\bgamma) \in \commonpaths[T](\delta) \Big\} \; \subset \; \Dindpaths[\bindepT]  , \qquad \delta > 0 .
\end{align*}
We endow these sets with the relative topology induced by $(\Dindpaths[\bindepT],\dindpaths[\bindepT])$
with the metric~\eqref{def:toplogy-inped-param}.

\begin{lem}\label{thm:convergence-of-common}
Fix $\bindepT \in [0,\infty)^n$ and $T, \delta > 0$, and let $(\tilde\bgamma_k)_{k \in \bN}$ be a sequence in $\Dextpath{T}{\bindepT}(\delta)$ converging to $\tilde\bgamma \in \Dindpaths[\bindepT]$. 
Then, $\tilde\bgamma \in \Dextpath{T}{\bindepT}$, and $\prCommonTime{T}(\tilde\bgamma_k)$ converges to $\prCommonTime{T}(\tilde\bgamma)$ in $\overline{\commonpaths[T](\delta)}$.
\end{lem}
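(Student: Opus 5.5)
The plan is to write $\bgamma_k := \prCommonTime{T}(\tilde\bgamma_k) \in \commonpaths[T](\delta)$ and $\bindeptime_k := \bindeptime_{\bgamma_k}$, so that $\bgamma_k^j = \tilde\gamma_k^j \circ \indeptime_k^j$ on $[0,T]$. The argument then runs in three steps: compactness of the time-changes, identification of the limit, and upgrading subsequential convergence to convergence of the full sequence.

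\emph{Compactness.} By \Cref{thm:time-change-properties-better}, each $\indeptime_k^j$ takes values in $[0,nT]$. By \Cref{thm:uniform-lipschitz-time-change}, applied with $\delta$ fixed and any $\varepsilon>0$, the maps $\indeptime_k^j$ are $L$-biLipschitz on $[0,T-\varepsilon]$, uniformly in $k$. Moreover, the lower derivative bound $\partial_t\indeptime^j \ge e^{2(1-n)T_0}$ and the upper bound $\partial_t \indeptime^j \le H^j$ hold on all of $[0,T]$ in the proof of that lemma. Even if this is not quite right up to the endpoint, monotonicity together with $\indeptime_k^j(T) \le nT$ suffices for equicontinuity near $t=T$ via a diagonal argument in $\varepsilon$. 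Arzel\`a--Ascoli then yields a subsequence along which $\indeptime_k^j \to \indeptime^j$ uniformly on $[0,T]$. Because the biLipschitz bounds pass to the limit, $\indeptime^j$ is strictly increasing, so the limit stays nondegenerate on $[0,T-\varepsilon]$.

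\emph{Identification of the limit.} Set $\bgamma^j := \tilde\gamma^j \circ \indeptime^j$. The estimate
\[
|\tilde\gamma_k^j(\indeptime_k^j(t)) - \tilde\gamma^j(\indeptime^j(t))| \le \dindpaths[\bindepT](\tilde\bgamma_k,\tilde\bgamma) + |\tilde\gamma^j(\indeptime_k^j(t)) - \tilde\gamma^j(\indeptime^j(t))|,
\]
combined with uniform continuity of $\tilde\gamma^j$ on the compact interval $[0,\indepT^j]$, gives $\bgamma_k \to \bgamma$ uniformly on $[0,T]$. Next I check that $\bgamma$ lies in $\commonpaths[T]$ with $\bindeptime_\bgamma = (\indeptime^j)_j$. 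Disjointness passes to the limit: $\dist(\gamma^i,\gamma^j) \ge \delta$ because distances between the uniformly converging curves converge. Simplicity of each $\gamma^j$ follows from simplicity of $\tilde\gamma^j$ (a member of $\Dindpaths[\bindepT]$) together with strict monotonicity of $\indeptime^j$. For the Loewner chain, uniform convergence of the disjoint simple curves gives Carath\'eodory convergence of the complements of $\bgamma_k[0,t]$, exactly as in the continuity part of \Cref{thm:time-change-properties-better}. Hence $\rcap(\bgamma[0,t]) = \lim nt = nt$, so $\bgamma$ is common-capacity parameterized. Similarly $\rcap(\gamma^j[0,t]) = \lim \indeptime^j_k(t) = \indeptime^j(t)$, so $\indeptime^j = \indeptime^j_\bgamma$. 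Consequently $\tilde\bgamma|_{\bindeptime_\bgamma[0,T]}\circ\bindeptime_\bgamma = \bgamma$, which shows $\tilde\bgamma \in \Dextpath{T}{\bindepT}$ with $\prCommonTime{T}(\tilde\bgamma)=\bgamma \in \overline{\commonpaths[T](\delta)}$.

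\emph{Full sequence.} The limit $\bgamma$ is determined by $\tilde\bgamma$ alone: $\indeptime^j_\bgamma$ is fixed by requiring $\tilde\gamma^j\circ\indeptime^j$ to be common-capacity parameterized, that is, by solving the capacity equations, which have a unique solution since capacity is strictly increasing along simple curves. Every subsequence therefore has a further subsequence converging to the same $\bgamma$, and the whole sequence converges.

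I expect the main obstacle to be the endpoint $t$ near $T$, where \Cref{thm:uniform-lipschitz-time-change} gives control only on $[0,T-\varepsilon]$, together with the rigorous justification of Carath\'eodory convergence. For the former I would use monotonicity and the bound $\indeptime \le nT$ to handle the last $\varepsilon$-window; if strict monotonicity there is needed, I would rely on the lower Lipschitz bound, which holds on the whole interval.
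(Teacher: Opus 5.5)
Your outline follows the paper's proof: Arzel\`a--Ascoli for the time changes $\bindeptime_k$ via \Cref{thm:uniform-lipschitz-time-change}, uniform convergence of $\bgamma_{k_i}$ to $\tilde\bgamma\circ\bindeptime$, membership of the limit in $\overline{\commonpaths[T](\delta)}$, and uniqueness of the limit. However, the key step, that the limit is common-parameterized, is not established by your argument. You verify $\rcap(\bgamma[0,t])=nt$ and $\rcap(\gamma^j[0,t])=\indeptime^j(t)$, and conclude that $\bgamma$ is common-capacity parameterized. The second identity holds automatically for \emph{any} time change of the capacity-parameterized curves $\tilde\gamma^j$. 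The first is a consequence of the common parameterization, not a characterization of it. For $n\ge 2$, a parameterization of $n$ disjoint slits satisfies a Loewner equation $\partial_t g_t(z)=g_t(z)\sum_j\lambda_j(t)\frac{e^{\ii\theta^j_t}+g_t(z)}{e^{\ii\theta^j_t}-g_t(z)}$ with weights $\lambda_j\ge 0$, and total capacity $nt$ only says $\sum_j\lambda_j\equiv n$. For instance, growing only $\gamma^1$ during some interval $[0,t_1]$ still gives total capacity $nt$ there, with weights $(n,0,\ldots,0)$. The common parameterization \eqref{eqn:multiradial_Loewner_1common} means $\lambda_j\equiv 1$. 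So what must be passed to the limit is the equal-weight Loewner equation itself, and this is exactly what the paper does. Carath\'eodory convergence of $\bgamma_{k_i}[0,t]$ (uniformly in $t$), together with convergence of the driving functions of $\bgamma_{k_i}$, shows that the limiting mapping-out functions solve \eqref{eqn:multiradial_Loewner_1common} with the limiting driving function. Hence each curve carries unit weight.

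The same defect affects your full-sequence step. For fixed $\tilde\bgamma$, the requirement $\rcap\big(\bigcup_j\tilde\gamma^j[0,\indeptime^j(t)]\big)=nt$ is one scalar equation for $n$ unknown increasing functions. It therefore does not determine $\bindeptime$ when $n\ge 2$, and strict monotonicity of capacity along a single curve does not fix this. What pins down $\bindeptime$, and hence the limit of the whole sequence, is uniqueness of the parameterization in which \eqref{eqn:multiradial_Loewner_1common} holds with unit weights; this is what the paper invokes. A minor point: your fallback near $t=T$ is not valid as stated. Increasing functions bounded by $nT$ need not be equicontinuous, so a diagonal argument in $\varepsilon$ only gives locally uniform convergence on $[0,T)$. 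Uniform convergence on $[0,T]$ needs a derivative bound up to $T$, which \Cref{thm:uniform-lipschitz-time-change} as stated does not supply. The paper's own Arzel\`a--Ascoli step is equally brief here, so this is secondary to the main gap above.
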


\begin{proof}
By assumption, we have 
\begin{align*}
\dindpaths[\bindepT](\tilde\bgamma_k, \tilde\bgamma) 
= \sup_{(s^1,\ldots,s^n) \in [\boldsymbol{0},\bindepT]} \; \sum_{j=1}^n \big| \tilde\gamma_k^j (s^j) - \tilde\gamma^j(s^j) \big| \quad \xrightarrow{k \to \infty} \quad 0 ,
\end{align*}
the projections $\bgamma_k := \prCommonTime{T}(\tilde\bgamma_k) \in \commonpaths[T](\delta)$ form a sequence in $\commonpaths[T](\delta)$, 
and the time-changes $\bindeptime_k := \bindeptime_{\bgamma_k}$ form a uniformly bounded sequence in $L^\infty([0,T], [\boldsymbol{0},\bindepT])$. 
Since they are uniformly Lipschitz on every $[0, T-\varepsilon]$ with $\varepsilon > 0$ by \Cref{thm:uniform-lipschitz-time-change}, Arzel\`a-Ascoli theorem implies that
$\smash{\bindeptime_{k_i} \xrightarrow{i \to \infty} \bindeptime}$ along a subsequence, and the function 
$\bindeptime \colon [0,T] \to [\boldsymbol{0},\bindepT]$ is uniformly Lipschitz.
Now, we have
\begin{align*}
\sup_{t \in [0,T]}  \big| \gamma_k^j (t) - (\tilde\gamma^j \circ \indeptime_k^j)(t) \big| 
= \; & \sup_{t \in [0,T]} \big| (\tilde\gamma_k^j \circ \indeptime_k^j) (t) - (\tilde\gamma^j \circ \indeptime_k^j)(t) \big| \\
= \; & \sup_{s \in \indeptime_k^j[0,T]} \big| \tilde\gamma_k^j (s) - \tilde\gamma^j (s) \big| 
\le \sup_{s \in [0,\indepT^j]} \big| \tilde\gamma_k^j (s) - \tilde\gamma^j (s) \big| 
\quad \xrightarrow{k \to \infty} \quad 0 .
\end{align*}
Let $\boldsymbol\eta := \tilde\bgamma|_{\bindeptime[0,T]} \circ \bindeptime$ and
$\boldsymbol\eta_{k_i} := \tilde\bgamma|_{\bindeptime_{k_i}[0,T]} \circ \bindeptime_{k_i}$. Then, we see that 
\begin{align*}
\dcommonpaths[T](\bgamma_{k_i}, \boldsymbol\eta) 
\leq \sup_{t \in [0,T]} \; \sum_{j=1}^n \Big(
\big| \gamma_{k_i}^j (t) - \eta_{k_i}^j(t) \big| + \big| \eta_{k_i}^j(t) - \eta^j(t) \big| \Big)
\quad \xrightarrow{i \to \infty} \quad 0 ,
\end{align*}
so the projections $\bgamma_{k_i} := \prCommonTime{T}(\tilde\bgamma_{k_i})$ converge in the metric $\dcommonpaths[T]$ to $\boldsymbol\eta$.
It now remains to show that $\boldsymbol\eta \in \commonpaths[T](\delta)$
(indeed, by uniqueness of common parametrization, we then have $\boldsymbol\eta = \prCommonTime{T}(\tilde\bgamma)$, showing the uniqueness of the limit of the whole sequence $(\bgamma_k)_{k \in \bN}$).

Note that $\boldsymbol\eta$ (extended by, say, hyperbolic line segments to the origin) is a simple radial multichord 
(\Cref{def:radial_multichord}). 
We still need to show that the curves $\boldsymbol\eta$ have the common parameterization. 
To this end, note that the convergence $\smash{\bgamma_{k_i} \xrightarrow{i \to \infty} \boldsymbol\eta}$ in the metric $\dcommonpaths[T]$ implies that for each $t \in [0,T]$, the segments $\smash{\bgamma_{k_i}[0,t] \xrightarrow{i \to \infty} \boldsymbol\eta[0,t]}$ converge in the Carath\'eodory sense 
(e.g.,~by a disk version of~\cite[Proposition~6.3]{Kemppainen:SLE_book}), 
meaning that the inverse mapping-out functions $g_{\bgamma_{k_i}[0,t]}^{-1}(z)$ converge to $g_{\boldsymbol\eta[0,t]}^{-1}(z)$ uniformly on compact subsets of $[0,T] \times \bD \ni (t,z)$, and thus, the capacities converge uniformly in time: 
\begin{align*}
\rcap(\bgamma_{k_i}[0,t]) = - \log (g_{\bgamma_{k_i}[0,t]}^{-1})'(0) \quad \xrightarrow{i \to \infty}  \quad - \log (g_{\boldsymbol\eta[0,t]})'(0) = \rcap(\boldsymbol\eta[0,t]) ,
\end{align*}
and moreover, the maps $g_{\boldsymbol\eta[0,t]}$ solve the multi-slit Loewner equation~\eqref{eqn:multiradial_Loewner_1common} (cf.~\cite[Proposition~6.6]{Miller-Sheffield:QLE}),
where the driving function is the limit of the driving functions of $\bgamma_{k_i}$ and in particular, the curves in $\boldsymbol\eta$ have the common parameterization, as claimed. 
\end{proof}

\begin{lem}\label{thm:topological-lemma}
Fix $T > 0$ and $n \geq 2$. 
For any $\bindepT \in [0,\infty)^n$, the map $\prCommonTime{T} \colon \Dextpath{T}{\bindepT} \to \commonpaths[T]$ is continuous. 
If furthermore $\min\bindepT \ge nT$, then $\Dextpath{T}{\bindepT}$ is an open subset of $\Dindpaths[\bindepT]$. 
\end{lem}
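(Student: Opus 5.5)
For the continuity of $\prCommonTime{T}$, I would reduce to \Cref{thm:convergence-of-common}. Take $\tilde\bgamma_k \to \tilde\bgamma$ in $\Dextpath{T}{\bindepT}$. The limit $\bgamma := \prCommonTime{T}(\tilde\bgamma)$ lies in $\commonpaths[T]$, so its curves are simple and pairwise disjoint on $[0,T]$ (they can only meet at the origin after time $T$). Hence $\delta_0 := \min_{i \neq j} \dist(\gamma^i[0,T],\gamma^j[0,T]) > 0$. The constraint is that \Cref{thm:convergence-of-common} requires the approximating sequence to lie uniformly in some $\Dextpath{T}{\bindepT}(\delta)$.

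To get this, note that $\prCommonTime{T}(\tilde\bgamma_k)[0,T] \subset \tilde\bgamma_k[\boldsymbol 0, \bindeptime_k(T)]$. Since $\bindeptime_k(T) \le nT \le \bindepT$ componentwise, these image sets are contained in $\tilde\bgamma_k[\boldsymbol 0,\bindepT]$. I would argue by subsequences.

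Suppose the separation degenerates along a subsequence. Then there are $s_k, u_k$ with $|\tilde\gamma_k^i(s_k) - \tilde\gamma_k^j(u_k)| \to 0$, where $s_k \le \indeptime^i_k(T)$ and $u_k \le \indeptime^j_k(T)$. Passing to limits gives $\tilde\gamma^i(s) = \tilde\gamma^j(u)$.

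To reach a contradiction I need $s \le \indeptime^i(T)$ for the limit time-change. For this I would run the argument in the proof of \Cref{thm:convergence-of-common} directly. The time-changes are uniformly bounded by $nT$ and nondecreasing. If the curves stay $\delta$-separated on $[0,T-\varepsilon]$, \Cref{thm:uniform-lipschitz-time-change} makes them equi-Lipschitz there. Then the first time of near-collision $t_k \le T$ converges along a subsequence. This is the \textbf{main obstacle}: a collision could occur at the final time $T$, where the Lipschitz control degenerates. I would handle it by applying the separated-case argument on $[0,T-\varepsilon]$, obtaining convergence of $\prCommonTime{T-\varepsilon}$ there, and then using continuity of capacity in the Carathéodory sense. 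That yields $\bindeptime_k \to \bindeptime_\bgamma$ pointwise, and the limit is identified by uniqueness of the common parameterization. A collision in the limit would then contradict the disjointness of $\bgamma$.

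Once uniform separation holds for large $k$, \Cref{thm:convergence-of-common} gives $\prCommonTime{T}(\tilde\bgamma_k) \to \prCommonTime{T}(\tilde\bgamma)$. Because every subsequence has a further subsequence with the same limit, the whole sequence converges, which proves continuity.

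For openness when $\min \bindepT \ge nT$, I would fix $\tilde\bgamma \in \Dextpath{T}{\bindepT}$. By \eqref{eqn:time-change-comparison}, the common-time portion uses only times $\bindeptime(T) < nT \le \bindepT$, so membership is a condition on the initial segments, and the full curves $\tilde\gamma^j[0,\indepT^j]$ are available. The key observation is this: for any $\tilde\bgamma'$ with simple, pairwise disjoint components on $[\boldsymbol 0,\bindepT]$, the common-parameterized Loewner evolution exists up to time $T$. Indeed, whenever the common evolution has reached time $t$, we have $\bindeptime(t) < nt$ by \Cref{thm:time-change-properties-better}. So the evolution never exhausts a curve before time $T$, as $nt \le nT \le \min\bindepT$. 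It therefore runs until time $T$, and $\tilde\bgamma' \in \Dextpath{T}{\bindepT}$.

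It remains to show that disjointness and simplicity of the relevant pieces is an open condition near $\tilde\bgamma$. I expect this to be delicate only near the endpoints, where the curves reach toward the origin. I would take $\delta = \min_{i\ne j}\dist(\tilde\gamma^i[0,\indepT^i], \tilde\gamma^j[0,\indepT^j])$ if this is positive; then any $\tilde\bgamma'$ within distance $\delta/2$ keeps disjoint components. If the full curves of $\tilde\bgamma$ touch, I would instead restrict to the strictly smaller times $\bindeptime(T) < nT$ and use the slack supplied by the strict inequality.
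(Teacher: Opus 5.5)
There is a genuine gap in the openness part. Your key observation gives membership in $\Dextpath{T}{\bindepT}$ only for configurations whose \emph{entire} curves $\tilde\gamma^j[0,\indepT^j]$ are pairwise disjoint. Likewise, your $\delta/2$-neighbourhood works only when $\tilde\bgamma$ itself has this property. But $\tilde\bgamma\in\Dextpath{T}{\bindepT}$ constrains only the initial pieces $\tilde\gamma^j[0,\indeptime^j(T)]$. The curves may well meet afterwards, and that is the case you dispose of in one sentence.

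In that case you can pick $s^j\in(\indeptime^j(T),\indepT^j]$ with the pieces $\tilde\gamma^j[0,s^j]$ pairwise disjoint. Then nearby $\tilde\bgamma_k$ also have disjoint pieces on $[\boldsymbol 0,\boldsymbol s]$. What you must then prove is that the common evolution of $\tilde\bgamma_k$ reaches time $T$ before some $\indeptime^j_k$ reaches $s^j$.
\begin{itemize}
\item The bound $\indeptime_k^j(t)<nt$ says nothing about this, since $s^j$ may be much smaller than $nT$.
\item Your continuity result cannot be invoked either: it presupposes $\tilde\bgamma_k\in\Dextpath{T}{\bindepT}$, which is exactly what is to be shown.
\end{itemize}

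The missing ingredient is a continuation argument for the time change. Let $T_k^*$ be the maximal common time up to which the evolution of $\tilde\bgamma_k|_{[\boldsymbol 0,\boldsymbol s]}$ exists, and assume $T_k^*\le T$ along a subsequence. Use the convergence $\bindeptime_k\to\bindeptime$ on $[0,t']$ for every $t'<\liminf_k T_k^*$, together with control of $\bindeptime_k$ up to $T_k^*$. This shows that $\bindeptime_k(T_k^*)$ stays near $\bindeptime(T_k^*)\le\bindeptime(T)$, which is bounded away from $\boldsymbol s$, contradicting exhaustion. This is what the paper's times $T_k(u)$ and $T(u)=\liminf_k T_k(u)$ accomplish, together with the slack $S=\min\bindepT-\max_j\indeptime^j(T(\delta))>0$ and the Lipschitz bounds of \Cref{thm:uniform-lipschitz-time-change}. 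The strict inequality $\bindeptime(T)<nT\le\bindepT$ only gives room to choose $\boldsymbol s$; it does not give the needed stability.

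Your continuity argument is sound in outline. You are right that \Cref{thm:convergence-of-common} requires the approximants to lie in a fixed $\Dextpath{T}{\bindepT}(\delta)$, which the paper's one-line reduction leaves implicit. However, the endpoint step (at the first near-collision times $t_k\to t_\infty$) should not appeal to Carath\'eodory convergence of $\tilde\gamma_k^j[0,\indeptime_k^j(t_k)]$. That presupposes the convergence of $\indeptime_k^j(t_k)$ you are after. Instead, argue as follows:
\begin{itemize}
\item Monotonicity gives $\liminf_k\indeptime_k^j(t_k)\ge\indeptime^j(t_\infty)$.
\item Suppose $\indeptime_k^i(t_k)\to s^i>\indeptime^i(t_\infty)$ along a subsequence. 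The projected configuration at time $t_k$ has total capacity $nt_k$. Monotonicity and lower semicontinuity of capacity under Hausdorff convergence then yield $nt_\infty\ge\rcap\big(\tilde\gamma^i[0,s^i]\cup\bigcup_{j\ne i}\tilde\gamma^j[0,\indeptime^j(t_\infty)]\big)>\rcap(\bgamma[0,t_\infty])=nt_\infty$, a contradiction.
\item The strict inequality holds because the added piece starts at the exposed tip $\gamma^i(t_\infty)$.
\end{itemize}
Also, in this part $\min\bindepT\ge nT$ is not assumed; you only need $\bindeptime_k(T)\le\bindepT$, which holds by definition.
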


\begin{proof}
The continuity of $\prCommonTime{T}$ on $\Dextpath{T}{\bindepT}$ is a direct consequence of \Cref{thm:convergence-of-common}. 
For the second claim, 
it suffices to show that each $\Dextpath{T}{\bindepT}(\delta) \subset \Dindpaths[\bindepT]$ is open.
To this end, take a sequence $(\tilde\bgamma_k)_{k \in \bN}$ in the complement $\Dindpaths[\bindepT] \smallsetminus \Dextpath{T}{\bindepT}(\delta)$ converging to $\tilde\bgamma \in \Dindpaths[\bindepT]$.
Define
\begin{align*}
T_k(u) := \sup \big\{t \ge 0 \cond \prCommonTime{t}(\tilde\bgamma_k) \in \commonpaths[t](u) \textnormal{ exists} \big\} , 
\qquad T(u) := \liminf_{k \to \infty}T_k(u) , \qquad u \ge 0.
\end{align*}
Note that $T(\delta) \le T$ by assumption, while for every $t \in [0, T(\delta))$, 
we have $\tilde\bgamma_k \in \Dextpath{t}{\bindepT}(\delta)$ for sufficiently large $k$, 
so that $\bgamma := \prCommonTime{T}(\tilde\bgamma) = \lim_{k \to \infty} \prCommonTime{T}(\tilde\bgamma_k) =: \lim_{k \to \infty} \bgamma_k$ by continuity of $\prCommonTime{T}$. 
The continuity of the time-change (\Cref{thm:time-change-properties-better}) then implies the uniform convergence $\smash{\bindeptime_k := \bindeptime_{\bgamma_k} \xrightarrow{k \to \infty} \bindeptime_{\bgamma} =: \bindeptime}$. 
Write $\bindeptime(T(\delta)) := \lim_{t \uparrow T(\delta)}\bindeptime(t)$.
Assuming that $\min\bindepT \ge nT$, by \Cref{thm:time-change-properties-better} we have $S := \min\bindepT - \underset{1 \le j \le n}{\max} \,\indeptime_{\bgamma}^j(T(\delta)) > 0$. 

We aim to show that for each $u > 0$ such that $\bgamma \in \commonpaths[T(\delta)](u)$, we have $u \ne \delta$.  
Set 
\begin{align*}
d := \underset{1 \le i < j \le n}{\min} \, \dist \big(\gamma^j[0, T(\delta)], \gamma^i[0, T(\delta)] \big) > u
\end{align*} 
and $\varepsilon := d-u > 0$. 
Fix $t \in [0, T(\delta))$ and $s \in (0, S)$ independent of $t$ such that
\begin{enumerate}[label=(\roman*)]
\item \label{aux_1}
$\diam \big(\tilde\gamma^j[\indeptime^j(t), \indeptime^j(t)+s] \big) < \frac{\varepsilon}{4}$ for all $1 \le j \le n$;

\item \label{aux_2}
we have $T_k(\delta) > t$, for every $k \in \bN$ large enough;

\item \label{aux_3}
we have $\indeptime_k^j(t) \le \indeptime^j(t) + s/2$, for every $k \in \bN$ large enough;

\item \label{aux_4}
$\dindpaths[\bindepT] \big(\tilde\gamma_k^j(\indeptime^j(t) + s), \tilde\gamma^j(\indeptime^j(t) + s) \big) < \frac{\varepsilon}{4}$ for all $1 \le j \le n$, for every $k \in \bN$ large enough.
\end{enumerate}
For such $k \in \bN$, we may thus estimate
\begin{align*}
\; &\dist \big(\tilde\gamma_k^j[0, \indeptime_k^j(t)+s/2], \tilde\gamma_k^i[0, \indeptime_k^i(t)+s/2] \big) &&\textnormal{[well-defined by~\ref{aux_2} and $s \in [0, S)$]}\\
\ge \; & \dist\big(\tilde\gamma_k^j[0, \indeptime^j(t)+s], (\tilde\gamma_k^i[0, \indeptime^i(t)+s]) \big) && \textnormal{[by~\ref{aux_3}]}\\
\ge \; & \dist\big(\tilde\gamma^j[0, \indeptime^j(t)+s], \tilde\gamma^i[0, \indeptime^i(t)+s] \big) - \tfrac{2\varepsilon}{4} && \textnormal{[by~\ref{aux_4}]}\\
\ge \; & \dist\big(\tilde\gamma^j[0, \indeptime^j(t)], \tilde\gamma^i[0, \indeptime^i(t)] \big) - \varepsilon && \textnormal{[by~\ref{aux_1}]}\\
\ge \; & \dist\big(\gamma^j[0, T(\delta)], \gamma^i[0, T(\delta)] \big) - \varepsilon &&\big[\textnormal{since } t < T(\delta) \implies \tilde \bgamma(\bindeptime[0, t]) \subset \bgamma[0, T(\delta)]\big]\\
> \; & d - \varepsilon \;=\; u.
\end{align*}
We thus have 
\begin{align*}
\frac{s}{2}
\leq \; & \underset{1 \le j \le n}{\max} \, \big| \indeptime_k^j(T_k(u)) - \indeptime_k^j(t) \big| 
\leq L(\btheta_0,n,u) \, |T_k(u) - t| \\
\qquad \Longrightarrow \qquad  
\underset{t \uparrow T(\delta)}{\lim} \; & \underset{k \to \infty}{\liminf} \, |T_k(u) - t| = \underset{t \uparrow T(\delta)}{\lim} \, |T(u) - t| = |T(u) - T(\delta)| > 0 ,
\end{align*}
which implies in particular that $u \ne \delta$ and thus, 
$\tilde\bgamma \notin \Dextpath{T(\delta)}{\bindepT}(\delta) \supset \Dextpath{T}{\bindepT}(\delta)$. 
This shows that $\Dindpaths[\bindepT] \smallsetminus \Dextpath{T}{\bindepT}(\delta)$ is indeed a closed subset, proving the second claim.
\end{proof}

\begin{remark}
The proof that $\Dextpath{T}{\bindepT} \subset \Dindpaths[\bindepT]$ is open in \Cref{thm:topological-lemma} uses the assumption $\min \bindepT \ge nT$ only to guarantee 
$S :=  \min \bindepT - \max_j\bindeptime_\bgamma^j(T(\delta)) > 0$, 
so that we can choose an element $s \in (0, S)$. 
The proof could be modified to hold for every $\bindepT > 0$, by extending each $\tilde\bgamma_k \in \Dextpath{T}{\bindepT}$ to, e.g, a curve $\hat\bgamma_k \in \Dindpaths[\bindepT+\boldsymbol{1}]$ such that $\lim_{k \to \infty} \hat\bgamma_k \in \Dindpaths[\bindepT+\boldsymbol{1}]$ exists, which ensures that $\hat S := \min \bindepT + 1 - \max_j\bindeptime_\bgamma^j(T(\delta)) \ge 1$, and then choosing $s \in (0, \hat S)$ instead. 
\end{remark}

\section{Finite-time LDP}
\label{sec:finite-time-LDP}

In this section, we will prove a finite-time LDP for multiradial $\SLE_{0+}$ in the topology of common-capacity-parameterized curves (\Cref{thm:n-radial-LDP-finite}).  
This result improves the finite-time multiradial LDP from~\cite{AHP:Large_deviations_of_DBM_and_multiradial_SLE} in two ways: first, the topology is stronger, and second, the role of the loop term is made explicit. 
Our aim is to use a generalized version of Varadhan's lemma (\Cref{thm:Varadhan-general} from Appendix~\ref{app:Varadhan}) 
to transfer the independent LDP to the $n$-radial case. 
However, there are two pitfalls we need to be careful with: differing time-parametrizations, and the restriction of the measures to non-colliding curves. 

\subsection{Brownian loop measure}

The Brownian loop measure~\cite{LSW:Conformal_restriction_the_chordal_case, Lawler-Werner:The_Brownian_loop_soup, Lawler:Conformally_invariant_processes_in_the_plane} is a sigma-finite measure on planar unrooted Brownian loops.
Fix $z \in \bD$ and $t > 0$. Consider the (sub-probability) measure $\mathbb{W}_z^t[\cdot]$ on Brownian paths on $\bD$, started from $z$ on the time interval $[0,t]$ and killed upon hitting the boundary $\partial\bD$. 
The disintegration of this measure with respect to the endpoint $w$ gives (sub-probability) measures $\mathbb{W}_{z \to w}^t[\cdot]$ on Brownian paths from $z$ to $w$ such that 
\begin{align*}
\mathbb{W}_z^t[\cdot] = \int_\bD \mathbb{W}^t_{z \to w}[\cdot] \ud w.
\end{align*}
The \emph{Brownian loop measure} on $\bD$ is defined as
\begin{align*}
\BLoop_{\bD}[\cdot] : = \int_0^{\infty} \frac{\ud t}{t} \int_\bD \mathbb{W}_{z \to z}^t[\cdot] \ud z .
\end{align*}
Upon forgetting the root $z$, this yields a measure on the set of unrooted and unparametrized loops in $\bD$, satisfying the following properties:
\begin{itemize}
\item \emph{Restriction property}: 
If $U \subset \bD$, then $\ud \BLoop_{U} [\ell]= \one\{\ell \subset U\} \, \ud \BLoop_\bD[\ell]$.
\item \emph{Conformal invariance}: 
If $\varphi \colon \bD \to D$ is a conformal bijection, then $\BLoop_D = \varphi_* (\BLoop_\bD)$.
\end{itemize}
The total mass of $\BLoop_\bD$ is infinite (e.g., because of small loops) but when considering only loops that intersect macroscopic disjoint closed sets, the measure is finite.
For disjoint closed subsets $\emptyset \neq K^j \subset \overline \bD$, with $j \in \{1,\ldots, n\}$, writing $\boldsymbol{K} = (K^1, \ldots,K^n)$, we set
\begin{align}\label{eq:BLM_term}
\cL(\boldsymbol{K}) :=
\sum_{j=2}^n \BLoop_{\bD} \big[ \ell \colon K^j \cap \ell \ne \emptyset \textnormal{ for at least two distinct } j \big] .
\end{align}
$\cL$ is positive, finite, conformally invariant, and continuous when viewed as a functional on the curve space $\commonpaths[T]$ with metric~\eqref{def:toplogy-with-param}. 
(\Cref{thm:BLM-cont} below is essentially~\cite[Lemma~3.2]{Peltola-Wang:LDP_of_multichordal_SLE_real_rational_functions_and_determinants_of_Laplacians}.)

\begin{lem}\label{thm:BLM-cont}
For each $T \in (0,\infty)$, the map $\cL$ is continuous from $\commonpaths[T]$ to $[0,\infty)$.
\end{lem}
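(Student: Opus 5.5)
We reduce continuity of $\cL$ on $\commonpaths[T]$ to a statement about Brownian loops hitting nearby compact sets. Fix $\bgamma \in \commonpaths[T]$ and a sequence $\bgamma_k \to \bgamma$ in the metric $\dcommonpaths[T]$. Since $\bgamma[0,T]$ is simple, the closed sets $K^j := \gamma^j[0,T]$ are pairwise disjoint, so $\delta := \min_{i\neq j}\dist(K^i,K^j) > 0$. Uniform convergence of the parameterized curves gives Hausdorff convergence $K_k^j := \gamma_k^j[0,T] \to K^j$. In particular $\dist(K_k^i,K_k^j) > \delta/2$ for large $k$, and for every $\varepsilon>0$ we eventually have $K_k^j \subset (K^j)_\varepsilon$ and $K^j \subset (K_k^j)_\varepsilon$, where $(\cdot)_\varepsilon$ denotes the closed $\varepsilon$-neighborhood intersected with $\overline\bD$.

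By inclusion--exclusion, it suffices to show continuity of each term
\[ m(A,B) := \BLoop_{\bD}[\ell \colon \ell\cap A\neq\emptyset,\ \ell\cap B\neq\emptyset] \]
for disjoint pairs, and more generally of the finitely many analogous masses of loops hitting a prescribed subcollection of the $K^j$, which combine into $\cL$. Each such mass is finite because the sets are at distance at least $\delta/2$ from one another: a loop hitting two of them has diameter at least $\delta/2$, and the $\BLoop_\bD$-mass of loops in $\bD$ of diameter at least $\delta/2$ is finite. Write $E_\varepsilon$ for the event that $\ell$ hits both $(A)_\varepsilon$ and $(B)_\varepsilon$. Monotonicity then gives the sandwich
\[ m(A^{-}_\varepsilon, B^{-}_\varepsilon) \le m(A_k,B_k) \le \BLoop_\bD[E_\varepsilon] , \]
where the lower bound is obtained by the symmetric argument: $A$ is contained in the $\varepsilon$-neighborhood of $A_k$, so any loop hitting $A$ comes within $\varepsilon$ of $A_k$. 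It therefore suffices to show
\[ \BLoop_\bD[E_\varepsilon] \to m(A,B) \quad \textnormal{as } \varepsilon\to0 , \]
together with the analogous statement for loops that come within $\varepsilon$ of $A$ but do not hit it.

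The events $E_\varepsilon$ decrease, are contained in the finite-mass set of loops of diameter at least $\delta/4$, and their intersection is the event that $\ell$ hits both $A$ and $B$, because $A$, $B$ and loops are compact. Dominated (monotone) convergence therefore gives $\limsup_k m(A_k,B_k)\le m(A,B)$. For the matching lower bound we need
\[ \BLoop_\bD[\ell \colon \dist(\ell,A)<\varepsilon,\ \ell\cap A=\emptyset,\ \ell\cap B\ne\emptyset] \to 0 , \]
that is, that loops which come arbitrarily close to $A$ yet miss it carry vanishing mass. Again by monotone convergence the limiting event is $\{\dist(\ell,A)=0,\ \ell\cap A=\emptyset\}$, which is empty since $\ell$ and $A$ are compact. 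So this also follows, provided $A_k \to A$ is used through the two-sided inclusions above and not merely through $A \subset (A_k)_\varepsilon$.

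The main subtlety, which I expect to be the real obstacle, is that the curves touch $\partial\bD$ at their starting points, where the loop measure in $\bD$ is not locally finite near the boundary. One must check that no boundary concentration spoils finiteness. This is settled by the following observation: every loop counted hits two sets that are $\delta/2$ apart, so it has diameter at least $\delta/2$. Brownian loops in $\bD$ of diameter at least $r$ have finite total mass, which can be bounded via the standard estimate in~\cite{Lawler-Werner:The_Brownian_loop_soup} or by comparison with the loop measure in $\bC$ restricted to loops of diameter in $[r,2]$ meeting the unit disk. This supplies the integrable dominating function needed in both monotone-convergence steps. Note that the dominating bound must be uniform in $k$, which holds because $\delta/2$ is a uniform separation. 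This is exactly the argument of~\cite[Lemma~3.2]{Peltola-Wang:LDP_of_multichordal_SLE_real_rational_functions_and_determinants_of_Laplacians}, which we would cite for the details.
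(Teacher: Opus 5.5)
\paragraph{Verdict: the upper bound is right, the lower bound has a genuine gap.}
Your upper bound is correct. Eventually $K_k^j\subset (K^j)_\varepsilon$, and the events $E_\varepsilon$ decrease, inside the finite-mass set of macroscopic loops, to the event of hitting both limit sets. This gives $\limsup_{k}\cL(\bgamma_k)\le\cL(\bgamma)$, and it is exactly the ``loop hits $\gamma^j_k$ but misses $\gamma^j$'' half of the paper's bound $|\cL(\bgamma)-\cL(\bgamma_k)|\le\sum_p\BLoop_{\bD}[A^p\,\Delta\,A^p_k]$.

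The lower bound does not work as written:
\begin{itemize}
\item The sandwich inequality $m(A^-_\varepsilon,B^-_\varepsilon)\le m(A_k,B_k)$ does not follow from monotonicity. $A^-_\varepsilon$ is never defined, and Hausdorff closeness gives no nontrivial set contained in $A_k$; indeed $A_k$ may be disjoint from $A$.
\item What must actually vanish is the mass of macroscopic loops hitting the \emph{limit} but missing the \emph{approximant}:
\[ \BLoop_{\bD}\big[\ell \colon \ell\cap A\neq\emptyset,\ \ell\cap A_k=\emptyset,\ \diam \ell\ge\delta/2\big] \quad \textnormal{as } k\to\infty . \]
\item These loops come within $\varepsilon$ of the moving set $A_k$ while missing it. They do not lie in the event $\{\dist(\ell,A)<\varepsilon,\ \ell\cap A=\emptyset\}$ you reduce to, since they hit $A$. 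That event only re-proves the upper bound.
\item Because $A_k$ changes with $k$, there is no decreasing family of events to which monotone convergence could apply.
\end{itemize}

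\paragraph{Why Hausdorff convergence alone cannot suffice, and what is missing.}
No argument using only the two-sided inclusions, i.e.\ Hausdorff convergence, can give the lower bound. Take $A$ a segment and $A_k$ a finite $\frac1k$-net of $A$. Then $A_k\to A$ in the Hausdorff metric, but $m(A_k,B)=0<m(A,B)$, because Brownian loops almost never pass through a fixed point.

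The missing idea is to use two facts together:
\begin{itemize}
\item The $\gamma^j_k$ are \emph{connected} sets with diameter bounded below uniformly in $k$, since $\diam\gamma^j_k\to\diam\gamma^j>0$.
\item A uniform hitting estimate such as Beurling's: if $z\in\gamma^j$, $\dist(z,\gamma^j_k)\le\varepsilon$ and $\gamma^j_k\not\subset B(z,r)$, then a Brownian motion started at $z$ leaves $B(z,r)$ before hitting $\gamma^j_k$ with probability at most $C(\varepsilon/r)^{1/2}$.
\end{itemize}
Apply this at a point where a macroscopic loop meets $\gamma^j$, either by re-rooting the loop there or by the strong Markov property and time reversal of the rooted bridge. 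This shows the displayed mass is $O(\varepsilon^{1/2})$ uniformly for large $k$. That is what the ``narrow tube'' step in the paper's proof, and in the Peltola--Wang lemma you cite, encodes; your monotone-convergence argument does not supply it.
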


\begin{proof}
Take a sequence $(\bgamma_k)_{k \in \bN}$ in $\commonpaths[T]$ converging to $\bgamma \in \commonpaths[T]$ as $k \to \infty$ in the metric~\eqref{def:toplogy-with-param}.
As in the proof of~\cite[Lemma~3.2]{Peltola-Wang:LDP_of_multichordal_SLE_real_rational_functions_and_determinants_of_Laplacians}, 
it suffices to show that for each $p \in \{ 2,\ldots,n \}$, 
\begin{align} \label{eqn: claim}
\; & |\cL(\bgamma) - \cL(\bgamma_k)|  
\leq 
\BLoop_{\bD} \big[ A^p \, \Delta \, A^p_k \big] 
\quad \overset{k \to \infty}{\longrightarrow} \quad 0 ,
\\
\qquad \textnormal{where} \qquad 
\nonumber
\; & A^p_k := \big\{ \ell \; \big| \; \ell \cap \gamma^i_k \neq \emptyset  \textnormal{ for at least $p$ of the } i \in \{1,\ldots,n\} \big\} , \\
\nonumber
\; & A^p := \big\{ \ell \; \big| \; \ell \cap \gamma^i \neq \emptyset  \textnormal{ for at least $p$ of the } i \in \{1,\ldots,n\} \big\} ,
\end{align}
and $A^p \, \Delta \, A^p_k := (A^p \smallsetminus A^p_k) \cup (A^p_k \smallsetminus A^p)$ is the symmetric difference.
To this end, consider a Brownian loop $\ell \in A^p \, \Delta \, A^p_k$. 
Then, either $\ell$ intersects less than $p$ of the curves $\bgamma_k$ and at least $p$ of the curves in $\bgamma$, or vice versa.
Without loss of generality, let us consider the former case. 
Then, since $p \geq 2$, the loop $\ell$ intersects at least two the curves in $\bgamma$, so in particular, $\ell$ is a macroscopic loop.
Furthermore, there exists an index $j \in \{1,\ldots,\np\}$ such that $\ell$ intersects $\gamma^j$ but not $\gamma^j_k$.
On the other hand, as $\gamma^j_k$ converges to $\gamma^j$, we see that when $k$ is large enough,  both $\gamma^j_k$ and $\gamma^j$ belong to a narrow tube.
But then, the total mass of $\ell$ intersecting $\gamma^j$ but avoiding $\gamma^j_k$ tends to zero when $k \to \infty$.  This proves~\eqref{eqn: claim}.
\end{proof}

\subsection{The $n$-radial $\SLE$ interaction term}

Let us now recall from~\cite{Healey-Lawler:N_sided_radial_SLE} how the $n$-radial $\SLE_\kappa$ measure (\Cref{def:n-radial_SLE_driving_functions})
is obtained from the product measure on $n$ independent $\SLE_\kappa$ curves by tilting by a Brownian loop measure term.
We denote by $\prob^\kappa$ the probability measure of $n$ independent radial $\SLE_\kappa$ curves in independent time,
and by $\Pr^\kappa$ the $n$-radial $\SLE_\kappa$ probability measure for the curves. 
For an almost surely finite stopping time $\tau$, we denote by $\Pr^\kappa_\tau$ the law of the $n$-radial $\SLE_\kappa$ curves restricted to the time-window $[0,\tau]$, 
(more precisely, the process restricted to its completed right-continuous filtration up to time $\tau$).
Using the time-change $\bindeptime$ from~\eqref{eqn:def-of-time-change-sigma}, 
we denote by $\probSLEindep$ the probability measure of $n$ independent radial $\SLE_\kappa$ curves in the common parameterization in the time-window $[0,T]$.
By~\cite[Propositions~3.4~\&~3.6]{Healey-Lawler:N_sided_radial_SLE}, 
\begin{align}\label{eqn:RNloop}
\frac{\ud \Pr^\kappa_T}{\ud \probSLEindep}(\bgamma) 
= \frac{\PartF(\btheta_T)}{\PartF(\btheta_0)} 
\exp \bigg( \hat\beta_n(\kappa) n T + \frac{\charge(\kappa) }{2} \cL(\bgamma[0,T]) \bigg) 
=: \exp\bigg(\frac{1}{\kappa}\Psi^\kappa_T(\bgamma)\bigg) ,
\end{align}
where $\btheta$ is the Loewner driving function of $\bgamma$, and $\PartF$ the partition function 
\begin{align}
\label{eq:PartF}
\PartF(\boldsymbol{\alpha}) := \; & \bigg( \underset{1\leq i<j\leq n}{\prod} \, \sin\bigg(\frac{\alpha^j-\alpha^i}{2}\bigg) \bigg)^{2/\kappa} ,
\end{align}
and $\cL$ is the Brownian loop term~\eqref{eq:BLM_term}, and the parameters are
\begin{align}\label{eqn:c-kappa-definition}
\hat\beta_n(\kappa) = \frac{(n-1) ((\kappa-4)^2+4 n)}{8 \kappa }
\qquad \textnormal{and} \qquad
\charge(\kappa) := \frac{(6-\kappa)(3\kappa-8)}{2\kappa} .
\end{align} 
Note that $\charge(\kappa)$ is the central charge (conformal anomaly number) of the associated conformal field theory,
and $\hat \beta_n(\kappa)$ is the $n$-interior scaling exponent.
For convenience, we also write
\begin{align} \label{eqn:psi-kappa-definition}
\Psi^\kappa_T(\bgamma) := \kappa \log\bigg(\frac{\PartF(\btheta_T)}{\PartF(\btheta_0)}\bigg) + \kappa\hat\beta_n(\kappa) n T + \frac{\kappa \charge(\kappa) }{2} \cL(\bgamma[0,T]) .
\end{align}

Let us now check that $(\Psi^\kappa_T)_{\kappa > 0}$ 
is a steadily converging family of maps, as required for the generalization of Varadhan's lemma 
(\Cref{thm:Varadhan-general} and \Cref{def:steady-convergence} in Appendix~\ref{app:Varadhan}). 
Its limit involves the semiclassical partition function
\begin{align*}
\LogPartF(\boldsymbol{\alpha}) 
:= - 2 \underset{1\leq i<j\leq n}{\sum} \log \sin\bigg(\frac{\alpha^j-\alpha^i}{2}\bigg)
= - \lim_{\kappa \to 0} \kappa \log \PartF(\boldsymbol{\alpha}) .
\end{align*}

\begin{lem}\label{thm:Psi-steady-convergence}
The maps $(\Psi^\kappa_T)_{\kappa > 0}$ converge steadily as $\kappa \to 0$ to the continuous function 
\begin{align}\label{eq:Psi-0}
\Psi^0_T \colon \commonpaths[T] \to \bR , \qquad 
\Psi^0_T(\bgamma) = \LogPartF(\btheta_0) - \LogPartF(\btheta_T) + \tfrac{1}{2} (n + 4) (n - 1) n \, T - 12 \cL(\bgamma[0,T]).
\end{align}
Furthermore, $\Psi^0_T$ is bounded from above.
\end{lem}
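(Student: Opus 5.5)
The proof is a direct computation with the three terms of $\Psi^\kappa_T$ in~\eqref{eqn:psi-kappa-definition}, each being an explicit function of $\kappa$ multiplied by a $\kappa$-independent functional of $\bgamma$. Steady convergence (\Cref{def:steady-convergence}) asks, roughly, that $\Psi^\kappa_T(\bgamma_\kappa) \to \Psi^0_T(\bgamma)$ whenever $\bgamma_\kappa \to \bgamma$ in $\commonpaths[T]$. It therefore suffices to show two things: each $\kappa$-prefactor converges, and each functional is continuous on $\commonpaths[T]$ with values in $\bR$.

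\textbf{The three terms.}
\begin{enumerate}
\item From~\eqref{eq:PartF}, $\kappa\log \PartF(\boldsymbol{\alpha}) = -\LogPartF(\boldsymbol{\alpha})$ exactly, for every $\kappa$. So the first term equals $\LogPartF(\btheta_0) - \LogPartF(\btheta_T)$ identically in $\kappa$.
\item From~\eqref{eqn:c-kappa-definition}, $\kappa\hat\beta_n(\kappa) = \tfrac18 (n-1)((\kappa-4)^2+4n) \to \tfrac18(n-1)(16+4n) = \tfrac12 (n-1)(n+4)$. Multiplying by $nT$ gives the stated linear term. Moreover, the convergence is uniform in $\bgamma$, since this term does not depend on $\bgamma$.
\item $\kappa\charge(\kappa)/2 = (6-\kappa)(3\kappa-8)/4 \to -12$. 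By \Cref{thm:BLM-cont}, $\cL(\bgamma[0,T])$ is continuous and finite on $\commonpaths[T]$.
\end{enumerate}
Combining these, $|\Psi^\kappa_T(\bgamma_\kappa) - \Psi^0_T(\bgamma)|$ is bounded by
\[
|\LogPartF(\btheta^\kappa_T) - \LogPartF(\btheta_T)| + O(\kappa)\big(T + \cL(\bgamma_\kappa[0,T])\big) + 12\,|\cL(\bgamma_\kappa[0,T]) - \cL(\bgamma[0,T])| .
\]
Every piece tends to zero, provided $\bgamma \mapsto \LogPartF(\btheta_T)$ is continuous.

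\textbf{Continuity of $\bgamma \mapsto \LogPartF(\btheta_T)$.} This is the main point, and I expect it to be the main obstacle. The driving function at time $T$ must depend continuously on $\bgamma \in \commonpaths[T]$. I would argue as in the proof of \Cref{thm:convergence-of-common}. Convergence in $\dcommonpaths[T]$ implies Carath\'eodory convergence of the hulls $\bgamma_k[0,t]$. The driver $e^{\ii\theta^j_T}$ is the image of the tip $\gamma^j(T)$ under $g_{\bgamma[0,T]}$. Continuity of this prime-end image under uniform convergence of the curves should follow from a disk version of \cite[Proposition~6.3]{Kemppainen:SLE_book}, or from \cite[Proposition~6.6]{Miller-Sheffield:QLE} (limits of drivers are drivers of limits).

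Since $\bgamma \in \commonpaths[T]$ is simple, its tips are distinct. Hence $\btheta_T$ lies in the open chamber $\chamber$, where $\LogPartF$ is finite and continuous. So $\LogPartF(\btheta^k_T) \to \LogPartF(\btheta_T)$, and $\Psi^0_T$ is continuous.

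\textbf{Upper bound.} Since $\cL \ge 0$, we have $-12\cL \le 0$. Also $\LogPartF(\btheta_T) \ge \inf_{\chamber}\LogPartF > -\infty$, because $\LogPartF \ge 0$: each $\sin$ factor is at most $1$. Hence
\[
\Psi^0_T \le \LogPartF(\btheta_0) + \tfrac12(n+4)(n-1)nT ,
\]
a finite constant depending only on $\btheta_0$, $n$ and $T$. The same argument gives the uniform-in-$\kappa$ upper bound for small $\kappa$, which Varadhan-type arguments typically also require: the loop term has a negative prefactor once $\kappa < 8/3$.
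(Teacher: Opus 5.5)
The reduction in your first paragraph is where the argument fails. Steady convergence in the sense of \Cref{def:steady-convergence} is not continuous convergence ($\Psi^\kappa_T(\bgamma_\kappa)\to\Psi^0_T(\bgamma)$ along $\bgamma_\kappa\to\bgamma$). It demands two things: (i) convergence \emph{uniformly} on the level sets $(\Psi^0_T)^{-1}[-M,M]$, and (ii) the tail bound $\Psi^0_T\le -M\Rightarrow \Psi^\kappa_T\le-\omega_M$ with $\omega_M\to\infty$. Neither follows from ``the prefactors converge and the functionals are continuous and finite''. The space $\commonpaths[T]$ is not compact, and both $\cL(\bgamma[0,T])$ and $\LogPartF(\btheta_T)$ are unbounded on it (e.g.\ when two curves nearly touch). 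For example, $f^\kappa=\kappa F$ with $F\ge 0$ continuous and unbounded converges continuously to $f^0=0$, but it violates (i) on $(f^0)^{-1}[-M,M]=X$. Your own error term $O(\kappa)\,(T+\cL)$ is exactly of this type. You never show it is uniformly small where required, and you do not address (ii) at all. Conversely, the continuity of $\bgamma\mapsto\LogPartF(\btheta_T)$, which you single out as the main obstacle, plays no role in steady convergence: that term is $\kappa$-independent and cancels exactly. It is only needed for the separate claim that $\Psi^0_T$ is continuous. There your extra care is reasonable, since the paper cites only \Cref{thm:BLM-cont}.

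The missing idea is the sign structure of $\Psi^0_T$. One has exactly
\[
\Psi^\kappa_T(\bgamma)-\Psi^0_T(\bgamma)=\tfrac{(n-1)nT}{8}\,(\kappa^2-8\kappa)+\tfrac{26\kappa-3\kappa^2}{4}\,\cL(\bgamma[0,T]) .
\]
On $\{\Psi^0_T\ge -M\}$, the facts $\LogPartF\ge0$ and $\cL\ge 0$ give $12\,\cL(\bgamma[0,T])\le M+\LogPartF(\btheta_0)+\tfrac12(n+4)(n-1)nT$, which yields (i). For (ii), the half with $\Psi^0_T\ge M$ is vacuous for large $M$ by the upper bound. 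For the other half, take $\kappa$ small enough that $\kappa\charge(\kappa)/2\le -6$. Using $\LogPartF(\btheta_T)\ge 0$ and $\cL\ge0$, you get $\Psi^\kappa_T\le \tfrac12\Psi^0_T+\tfrac12\LogPartF(\btheta_0)+\tfrac12(n+4)(n-1)nT$, so $\omega_M=M/2-O(1)$ works. The essential input is that the limiting loop prefactor $-12$ is strictly negative. Your closing remark about the negative prefactor for $\kappa<8/3$ is the seed of (ii), but it must be made quantitative and tied to the definition. The paper packages the same reasoning differently. It splits $\Psi^\kappa_T$ into its three terms ($\kappa$-independent, $\bgamma$-independent, and $\tfrac{\kappa\charge(\kappa)}{2}\cL$), each of which trivially converges steadily to a limit bounded from above, and then invokes \Cref{thm:steady-limit-addition}.
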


\begin{proof}
Because $\cL \geq 0$, $\Psi^0_T$ is bounded from above by $\Psi^0_T \le \LogPartF(\btheta_0) + \tfrac{1}{2} (n + 4) (n - 1) n T < \infty$. 
The continuity of $\Psi^0_T$ follows from \Cref{thm:BLM-cont}.
The steady convergence
$\smash{\Psi^\kappa_T \xrightarrow{\kappa \to 0} \Psi^0_T}$ follows easily from the 
definition~\eqref{eqn:psi-kappa-definition} of $\Psi^\kappa_T$ and \Cref{thm:steady-limit-addition} from Appendix~\ref{app:Varadhan}.
\end{proof}

\subsection{Finite-time LDP}

An LDP for the measures $\probSLEindep$ together with the above \Cref{thm:Psi-steady-convergence} 
and the generalized Varadhan's lemma (\Cref{thm:Varadhan-general}) would imply an LDP for the $n$-radial SLE-measures $\Pr^\kappa_T$. 
So, let us investigate how one could derive an LDP for $\probSLEindep$.
An LDP for the single-radial $\SLE_\kappa$-measures immediately yields an LDP for their independent product measure in any time window. 
Since by \Cref{thm:time-change-properties-better} the common time $T$ always occurs before the independent time $nT$, 
the set $\Dextpath{T}{\boldsymbol{nT}} \subset \Dindpaths[\boldsymbol{nT}]$ studied in \Cref{subsec:topo}
contains every curve in $\commonpaths[T]$ as sub-paths.
Hence, the measure $\probSLEindep$ of independent SLEs up to common time $T$ can be constructed by 
first, restricting the measure $\prob^\kappa_{\boldsymbol{nT}}$ to $\Dextpath{T}{\boldsymbol{nT}}$ 
--- let us denote the restricted measure by $\prob_{\bindeptime(T),\boldsymbol{nT}}^\kappa$ --- 
and then pushforwarding it through the projection $\prCommonTime{T} \colon \Dextpath{T}{\boldsymbol{nT}} \to \commonpaths[T]$ (also studied in \Cref{subsec:topo}).
One may hope to use the contraction principle in the last step; however, establishing an LDP for the measures 
$\prob_{\bindeptime(T),\boldsymbol{nT}}^\kappa$ turns out to be problematic. 
To see why, we need to investigate what happens to the LDP when restricting the measures.  
As a starting point, let us recall the case where an LDP can be trivially carried over through restrictions.

\begin{proposition}[{Restricted LDP; see, e.g.,~\cite[Lemma~4.1.5(b)]{Dembo-Zeitouni:Large_deviations_techniques_and_applications}}]\label{thm:restricted-LDP}
Let $X$ be a Hausdorff topological space, and $(P^\kappa)_{\kappa > 0}$ a family of probability measures on $X$ satisfying an LDP with good rate function $I \colon X \to [0, +\infty]$.  
Suppose $A \subset X$ is a measurable subset such that $I^{-1}[0, \infty) \subset A$ and $P^\kappa[A] = 1$ for every $\kappa > 0$. Then, the family $(P|_A^\kappa)_{\kappa > 0}$ of restricted measures satisfies an LDP with good rate function $I|_A \colon A \to [0, +\infty]$.
\end{proposition}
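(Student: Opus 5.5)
We check the two LDP bounds for $(P|_A^\kappa)_{\kappa>0}$ on $A$, with the relative topology, directly from those for $(P^\kappa)$ on $X$. The guiding point is that $P^\kappa|_A$ is simply $P^\kappa$ seen on $A$: since $P^\kappa[A]=1$, we have $P^\kappa|_A[B] = P^\kappa[B]$ for every measurable $B \subset A$, so no normalization appears. Since $A$ carries the relative topology, open sets of $A$ have the form $G \cap A$ with $G \subset X$ open, and closed sets of $A$ have the form $F \cap A$ with $F \subset X$ closed. Note also that $A$ is Hausdorff as a subspace of $X$.

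\emph{Lower bound.} Take $G \subset X$ open. Then
\begin{align*}
\liminf_{\kappa\to0+}\kappa\log P^\kappa|_A[G\cap A] = \liminf_{\kappa\to0+}\kappa\log P^\kappa[G] \ge -\inf_{G} I = -\inf_{G\cap A} I|_A .
\end{align*}
The last equality holds because points of $G\smallsetminus A$ have $I=\infty$, by the hypothesis $I^{-1}[0,\infty)\subset A$. If $G\cap A = \emptyset$, both sides equal $-\infty$, consistently.

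\emph{Upper bound.} This is the step needing care, since $F\cap A$ need not be closed in $X$. We bound it by its closure in $X$:
\begin{align*}
P^\kappa|_A[F\cap A] \le P^\kappa[\overline{F\cap A}] \le P^\kappa[F],
\end{align*}
which gives $\limsup_{\kappa\to0+}\kappa\log P^\kappa|_A[F\cap A] \le -\inf_F I$. Again, $\inf_F I = \inf_{F\cap A} I$ because $I=\infty$ off $A$. Hence the upper bound holds with $-\inf_{F\cap A} I|_A$.

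\emph{Goodness.} For $\alpha<\infty$, the sublevel set $\{I|_A\le\alpha\} = \{I\le\alpha\}$ lies inside $A$ and is compact in $X$. A set contained in $A$ is compact in the subspace topology exactly when it is compact in $X$, because open covers in the two topologies correspond through intersection with $A$. So these sublevel sets are compact in $A$. Lower semicontinuity of $I|_A$ then follows: its sublevel sets are compact in the Hausdorff space $A$, hence closed.

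The only real subtlety is the upper bound, where one must not assume that $F\cap A$ is closed in $X$. The inclusion $F\cap A\subset F$ together with the fact that $I$ is infinite off $A$ settles it. Measurability of $A$ is used only so that $P^\kappa|_A$ is well defined.
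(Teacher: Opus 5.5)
Your proof is correct and is the standard argument for Dembo--Zeitouni's Lemma~4.1.5(b), which the paper cites rather than proves. It uses the same ingredients: open and closed sets of $A$ are traces of those of $X$, $P^\kappa[\Gamma]=P^\kappa[\Gamma\cap A]$, the infima agree because $I\equiv\infty$ off $A$, and compactness of sublevel sets is intrinsic. The detour through $\overline{F\cap A}$ in the upper bound is harmless but unnecessary, since $P^\kappa[F\cap A]=P^\kappa[F]$ directly.
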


If we drop the assumption $I^{-1}[0, \infty) \subset A$, the above statement can be generalized to the case where $A$ is a closed continuity set for every $P^\kappa$ (i.e., $P^\kappa(\partial A) = 0$) such that $\liminf_{\kappa \to 0}\kappa\log P^\kappa(A) = 0$. 
The closedness of $A$ in this case is essential, because otherwise there is no guarantee for the goodness of the restricted rate function $I|_A$.

In our case, we are restricting to the open subset $\Dextpath{T}{\boldsymbol{nT}} \subset \Dindpaths[\boldsymbol{nT}]$ (\Cref{thm:topological-lemma}) 
which, although not straightforward to prove, should be a continuity set for every $\prob^\kappa_{\boldsymbol{nT}}$.  
To apply the generalized version of \Cref{thm:restricted-LDP}, one thus needs to instead restrict the measures $\prob^\kappa_{\boldsymbol{nT}}$ to the closure $(\DextpathCl{T}{\boldsymbol{nT}},\Dindpaths[\boldsymbol{nT}])$.  
To project the resulting measures to $\commonpaths[T]$ in a way compatible with the contraction principle, 
one would need to extend the projection $\prCommonTime{T}$ continuously to a map from $\DextpathCl{T}{\boldsymbol{nT}}$ to a completion of $\commonpaths[T]$.
Finally, one would need to extend $\Psi^\kappa_T$ continuously to a completion of $\commonpaths[T]$ in order for \Cref{thm:Varadhan-general} to be applicable. 

Although each step above may be doable, we can circumvent almost all of these technicalities by 
exchanging the order of the projection onto the common time and the change of measure from independent to $n$-radial $\SLE_\kappa$ measures. The only step we need to still carry out is a suitable continuous extension of $\Psi^0_T$, provided by the following result.

\begin{lem}\label{thm:tilde-Psi-continuous}
The mappings $\tilde \Psi^\kappa_T \colon \Dindpaths[\boldsymbol{nT}] \to [-\infty, \infty)$ defined as 
\begin{align*}
\tilde\Psi^\kappa_T(\tilde \bgamma) := 
\begin{cases}
( \Psi^\kappa_T\circ\prCommonTime{T} )(\tilde\bgamma) , & \tilde \bgamma \in \Dextpath{T}{\boldsymbol{nT}}, \\
-\infty, & \textnormal{otherwise},
\end{cases}
\end{align*}
converge steadily to $\tilde \Psi^0_T$ as $\kappa \to 0$. 
Furthermore, $\tilde \Psi^0_T$ is continuous and bounded from above.
\end{lem}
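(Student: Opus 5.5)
The plan is to take $\tilde\Psi^0_T$ to be the analogous extension of $\Psi^0_T$ from \eqref{eq:Psi-0}: set $\tilde\Psi^0_T := \Psi^0_T\circ\prCommonTime{T}$ on $\Dextpath{T}{\boldsymbol{nT}}$ and $\tilde\Psi^0_T := -\infty$ on the complement. I would then check, in order: the upper bound, continuity (in the extended sense, with values in $[-\infty,\infty)$), and steady convergence, reducing each step to \Cref{thm:Psi-steady-convergence} and to the topological lemmas of \Cref{subsec:topo}.

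\textbf{Upper bound.} On $\Dextpath{T}{\boldsymbol{nT}}$ we have $\cL\ge 0$ and $\LogPartF\ge \inf\LogPartF>-\infty$ by \eqref{eq:LogPartFMin}. Hence
\[
\tilde\Psi^0_T \le \LogPartF(\btheta_0)-\inf\LogPartF+\tfrac12(n+4)(n-1)nT .
\]
Off $\Dextpath{T}{\boldsymbol{nT}}$ the value is $-\infty$, so the bound holds everywhere. The same argument, applied to $\kappa\log\PartF(\btheta_T)\le 0$ and the sign of $\charge(\kappa)$ for small $\kappa$, gives a uniform upper bound on $\tilde\Psi^\kappa_T$.

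\textbf{Continuity.} By \Cref{thm:topological-lemma}, $\Dextpath{T}{\boldsymbol{nT}}$ is open (here $\min\boldsymbol{nT}=nT$) and $\prCommonTime{T}$ is continuous on it. Together with the continuity of $\Psi^0_T$ from \Cref{thm:Psi-steady-convergence}, this gives continuity of $\tilde\Psi^0_T$ at points of $\Dextpath{T}{\boldsymbol{nT}}$.

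The main obstacle is continuity at boundary points. Take $\tilde\bgamma\notin\Dextpath{T}{\boldsymbol{nT}}$ and a sequence $\tilde\bgamma_k\in\Dextpath{T}{\boldsymbol{nT}}$ with $\tilde\bgamma_k\to\tilde\bgamma$; we must show $\tilde\Psi^0_T(\tilde\bgamma_k)\to-\infty$. I would argue by contradiction. Suppose $\Psi^0_T(\prCommonTime{T}(\tilde\bgamma_k))$ stays bounded below along a subsequence. Then both $\LogPartF(\btheta^k_T)$ and $\cL(\bgamma_k[0,T])$ stay bounded.

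The key step is to show that $\cL(\bgamma_k[0,T])\le C$ forces $\bgamma_k\in\commonpaths[T](\delta)$ for some $\delta>0$ uniform in $k$. The reason is that two disjoint curve pieces that come within distance $\delta$ of each other, while each having macroscopic diameter, are hit by loops of Brownian-loop mass at least of order $\log(1/\delta)$. Their diameters are macroscopic because they start from the fixed, distinct points $e^{\ii\theta_0^j}$ and have capacity bounded below via \Cref{thm:time-change-properties-better}. The standard estimate counts loops in dyadic annuli around a near-contact point, each annulus crossed by both curves, and each such scale contributes a constant amount of mass.

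Given this uniform $\delta$, \Cref{thm:convergence-of-common} yields $\tilde\bgamma\in\Dextpath{T}{\boldsymbol{nT}}$, which is a contradiction. Thus $\tilde\Psi^0_T$ is continuous as a map into $[-\infty,\infty)$.

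\textbf{Steady convergence.} I would use \Cref{def:steady-convergence}, i.e.\ $\limsup\tilde\Psi^{\kappa}_T(\tilde\bgamma_\kappa)\le\tilde\Psi^0_T(\tilde\bgamma)$ whenever $\tilde\bgamma_\kappa\to\tilde\bgamma$, possibly with equality on the relevant part. There are two cases.

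\begin{itemize}
\item If $\tilde\bgamma\in\Dextpath{T}{\boldsymbol{nT}}$, then by openness $\tilde\bgamma_\kappa$ eventually lies in it. The statement follows from the steady convergence of $\Psi^\kappa_T$ in \Cref{thm:Psi-steady-convergence}, composed with the continuous map $\prCommonTime{T}$, since steady convergence is preserved under precomposition with continuous maps.
\item If $\tilde\bgamma$ lies outside, write $\Psi^\kappa_T=\Psi^0_T+o(1)$ in terms of $\kappa\log\PartF$, $\kappa\hat\beta_n$ and $\kappa\charge/2\to -12$, with errors of order $\kappa\cdot(\LogPartF+\cL+T)$. The coefficient of $\cL$ stays at most $-11$ for small $\kappa$. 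The boundary argument above then drives $\tilde\Psi^\kappa_T(\tilde\bgamma_\kappa)\to-\infty$, as required.
\end{itemize}
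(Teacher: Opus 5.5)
Your skeleton matches the paper's: continuity at points of $\Dextpath{T}{\boldsymbol{nT}}$ from openness and continuity of $\prCommonTime{T}$, and at other points showing that if $\tilde\Psi^0_T$ stays bounded below along $\tilde\bgamma_k\to\tilde\bgamma$, then $\tilde\bgamma\in\Dextpath{T}{\boldsymbol{nT}}$ by \Cref{thm:convergence-of-common}. However, the key step fails as you justify it.

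The loop term uses $\BLoop_{\bD}$, so in your dyadic count only annuli contained in $\bD$ contribute. Near a point $z$ where two curves come within distance $\delta$, the loop mass is therefore only $\gtrsim \log(\min\{r,\dist(z,\partial\bD)\}/\delta)$, not $\log(1/\delta)$. Near-contacts at the boundary can be cheap. Let $\gamma^1,\gamma^2$ run inside the $\delta$-neighbourhood of the boundary arc between their starting points, make U-turns at mutual distance $\delta$ next to a point $p$ of that arc, run back, and only then enter the bulk far apart. A loop in $\bD$ hitting both near $p$ at scale $R\in[\delta,1]$ must come within $O(\delta)$ of $\partial\bD$, which costs a factor $O(\delta/R)$. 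Summing over dyadic $R$ gives $\cL=O(1)$ uniformly in $\delta$, and $\LogPartF(\btheta_T)$ stays bounded because the tips end far apart in harmonic measure. So bounded $\cL$, even together with bounded $\LogPartF(\btheta_T)$, does not force $\bgamma_k\in\commonpaths[T](\delta)$ for a uniform $\delta>0$. Fixed distinct starting points and capacity lower bounds do not exclude this scenario.

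The missing idea is to use the hypothesis $\tilde\bgamma_k\to\tilde\bgamma$ in $\Dindpaths[\boldsymbol{nT}]$ inside the key step. Since $\tilde\gamma^j(0,nT]\subset\bD$ and the starting points are distinct, any accumulation point of near-contact points of $\gamma^i_k,\gamma^j_k$ lies in $\tilde\gamma^i[0,nT]\cap\tilde\gamma^j[0,nT]\subset\bD$. The example above has no limit in $\Dindpaths[\boldsymbol{nT}]$: its boundary-hugging part has vanishing capacity but macroscopic length. Hence near-contacts stay at distance $\ge d_0>0$ from $\partial\bD$, and only then does your annulus count give $\gtrsim\log(d_0/\delta)$. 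With this repair, your quantitative estimate is a valid alternative.

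The paper encodes the same point by compactness. It passes to Hausdorff limits $K^j\subset\tilde\gamma^j[0,nT]$ of $\gamma^j_{k_i}[0,T]$, so that $\cL(\boldsymbol K)<\infty$ forces the $K^j$ to be pairwise disjoint. Then $\bgamma_{k_i}\in\commonpaths[T](\delta/2)$ with $\delta=\min_{l<m}\dist(K^l,K^m)$.

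Separately, \Cref{def:steady-convergence} is a pointwise-uniform condition, not the sequential $\limsup$ statement you give, and it needs no topology. Both functions are $-\infty$ off $\Dextpath{T}{\boldsymbol{nT}}$. On it, $\kappa\log\PartF=-\LogPartF$ exactly, so $\tilde\Psi^\kappa_T-\tilde\Psi^0_T=O(\kappa)(T+\cL)$, and $\cL$ is bounded on $\{|\tilde\Psi^0_T|\le M\}$. The tail condition follows from $\tfrac{\kappa\charge(\kappa)}{2}\le-11$ for small $\kappa$ and $\LogPartF\ge 0$. Your second case, which leans on the flawed boundary argument, is therefore unnecessary.
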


\begin{proof}
The steady convergence $\smash{\tilde\Psi^\kappa_T \xrightarrow{\kappa \to 0} \tilde\Psi^0_T}$ to a function bounded from above is a direct consequence of \Cref{thm:Psi-steady-convergence}. 
The continuity of $\tilde \Psi^0_T$ follows by showing that for every $x \in \bR$, 
the sets $\closed_{\le x} := (\tilde\Psi^0_T)^{-1}[-\infty, x]$ and $\closed_{\ge x} := (\tilde\Psi^0_T)^{-1}[x, +\infty)$ are closed subsets of $\Dindpaths[\boldsymbol{nT}]$.

To verify that $\closed_{\le x} \subset \Dindpaths[\boldsymbol{nT}]$ are closed, let $(\tilde \bgamma_k)_{k \in \bN}$ be a sequence in $F_{\le x}$ converging to $\tilde \bgamma \in \Dindpaths[\boldsymbol{nT}]$. 
Without loss of generality, assume $\tilde\Psi^0_T(\tilde\bgamma) > -\infty$, so that $\tilde \bgamma \in \Dextpath{T}{\boldsymbol{nT}}$. 
By \Cref{thm:topological-lemma}, as $\Dextpath{T}{\boldsymbol{nT}}$ is an open subset of $\Dindpaths[\boldsymbol{nT}]$, 
we have $\tilde\bgamma_k \in \Dextpath{T}{\boldsymbol{nT}}$ for sufficiently large $k \in \bN$. 
By the continuity of the projection $\prCommonTime{T}$ from \Cref{thm:convergence-of-common}, 
we conclude that 
\begin{align*}
\bgamma := \prCommonTime{T}(\tilde\bgamma) = \lim_{k \to \infty} \prCommonTime{T}(\tilde\bgamma_k) =: \lim_{k \to \infty} \bgamma_k. 
\end{align*} 
Since $\Psi^0_T$ is continuous, we thus obtain $\Psi^0_T(\bgamma) = \underset{k \to \infty}{\lim} \, \Psi^0_T(\bgamma_k) \le x$, so $\tilde \bgamma \in F_{\le x}$.

To verify that $\closed_{\ge x} \subset \Dindpaths[\boldsymbol{nT}]$ are closed, take a sequence $(\tilde \bgamma_k)_{k \in \bN}$ in $F_{\ge x} \subset\Dextpath{T}{\boldsymbol{nT}}$  converging to $\tilde \bgamma \in \Dindpaths[\boldsymbol{nT}]$, and write $\bgamma_k := \prCommonTime{T}(\tilde \bgamma_k)$. 
By compactness of the Hausdorff metric, we may pass to a subsequence $(\bgamma_{k_i})_{i \in \bN}$ such that $\gamma_{k_i}^j$ converges to some compact set $K^j \subset \overline \bD$ for each $j \in \{1,\ldots, n\}$.
Writing $\boldsymbol{K} = (K^1, \ldots,K^n)$, since the Brownian loop measure is continuous with respect to the Hausdorff metric by \Cref{thm:BLM-cont}, from~\eqref{eq:Psi-0} we get
\begin{align*}
-\infty < x \le \; & \Psi^0_T(\bgamma_{k_i}) 
\leq \LogPartF(\btheta_0) + \tfrac{1}{2} (n + 4) (n - 1) n \, T - 12 \cL(\bgamma_{k_i}[0,T])
\\
\quad \xrightarrow{i \to \infty} \quad
\; & \LogPartF(\btheta_0) + \tfrac{1}{2} (n + 4) (n - 1) n \, T - 12 \cL(\boldsymbol{K}) ,
\end{align*}
where $\cL(\boldsymbol{K}) < \infty$; 
so for large enough $i$, $\bgamma_{k_i} \in \commonpaths[T](\frac{\delta}{2})$, 
where $\delta := \min_{l < m} \dist(K^l, K^m) > 0$.
We thus find that 
$\smash{\bgamma_{k_i} \xrightarrow{i \to \infty} \prCommonTime{T}(\tilde\bgamma) =: \bgamma}$ and 
$\tilde\bgamma \in \Dextpath{T}{\boldsymbol{nT}}$ by \Cref{thm:convergence-of-common}.
Since $\Psi^0_T$ is continuous, we thus obtain $\Psi^0_T(\bgamma) = \underset{i \to \infty}{\lim}\,\Psi^0_T(\bgamma_{k_i}) \ge x$, so $\tilde \bgamma \in F_{\ge x}$.
\end{proof}

\begin{restatable}{thm}{LDPfinite}
\label{thm:n-radial-LDP-finite}
Fix $T \in (0,\infty)$. 
The family $(\Pr^\kappa_T)_{\kappa > 0}$ of laws of the $n$-radial $\SLE_\kappa$ curves 
satisfy an LDP on $\commonpaths[T]$ with good rate function $\nBessel_T$\textnormal{:}
for every open $G \subset \commonpaths[T]$ and closed $F \subset \commonpaths[T]$, we have
\begin{align}\label{eq:LDP_bounds_finite-time}
\begin{split}
\liminf_{\kappa \to 0+}\kappa\log\Pr^\kappa_T[G] \ge\;& 
-\inf_{\bgamma \in G}\nBessel_T(\bgamma) , \\
\limsup_{\kappa \to 0+}\kappa\log\Pr^\kappa_T[F] \le\;& 
-\inf_{\bgamma \in F}\nBessel_T(\bgamma).
\end{split}
\end{align}
The rate function $\nBessel_T \colon \commonpaths[T] \to [0,+\infty]$ can be written either in terms of the interaction~\eqref{eq:Psi-0}, 
\begin{align}\label{eqn:BM-loop-energy}
\nBessel_{T}(\bgamma)  := \nDindepenergy_{\bindeptime(T)}(\bgamma) - \Psi^0_T(\bgamma) ,
\qquad 
\textnormal{where}
\qquad 
\nDindepenergy_{\bindeptime(T)}(\bgamma) := \; & \sum_{j=1}^n \Denergy_{\indeptime_{\bgamma}^j(T)} \big(\gamma^j \circ (\indeptime_{\bgamma}^j)^{-1}\big) 
\end{align}
and $\Denergy_{\indeptime_{\bgamma}^j(T)}$ are the independent single-radial Loewner energies from Equation~\eqref{eqn:nDirichlet_energy_def}, or 
in terms of  the Dyson-Dirichlet energy of the multiradial Loewner driving function $\btheta$ of $\bgamma$,
\begin{align}\label{eq:Dyson-Dirichlet_energy}
\hspace*{-4mm}
\nBessel_T(\bgamma) 
:= \; &
\begin{dcases} 
\tfrac{1}{2} \int_0^T \sum_{j=1}^n \bigg| \tfrac{\ud}{\ud s} \theta^j_s - 2 \sum_{\substack{1 \leq k \leq n \\[.1em] k\neq j}} \cot \Big( \tfrac{\theta_s^j - \theta_s^k}{2} \Big) \bigg|^2 \ud s ,
\quad & \textnormal{if } \btheta 
\textnormal{ is abs.~cont.~on $[0,T]$,} \\
\infty , & \textnormal{otherwise.}
\end{dcases}
\end{align}
\end{restatable}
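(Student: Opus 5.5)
The plan is to tilt the product measure $\prob^\kappa_{\boldsymbol{nT}}$ on $\Dindpaths[\boldsymbol{nT}]$ by $\exp(\tilde\Psi^\kappa_T/\kappa)$, project afterwards, and then identify the two forms of the rate function.

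\textbf{Step 1 (tilted measure upstairs).} By~\cite[Theorem~1.2]{Abuzaid-Peltola:Large_deviations_of_radial_SLE0}, each single radial $\SLE_\kappa$ in capacity parameterization on $[0,nT]$ satisfies an LDP with good rate function $\Denergy_{nT}$. Hence the product family $\prob^\kappa_{\boldsymbol{nT}}$ satisfies an LDP on $(\Dindpaths[\boldsymbol{nT}],\dindpaths[\boldsymbol{nT}])$ with good rate function $\tilde I(\tilde\bgamma)=\sum_j \Denergy_{nT}(\tilde\gamma^j)$.

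\textbf{Step 2 (Varadhan).} Define $\tilde Q^\kappa$ by $\ud\tilde Q^\kappa/\ud\prob^\kappa_{\boldsymbol{nT}} = \exp(\tilde\Psi^\kappa_T/\kappa)$. By Proposition~\ref{thm:time-change-properties-better}, $\indeptime^j(T)<nT$, so $\Dextpath{T}{\boldsymbol{nT}}$ has full measure for independent curves up to the first collision. By~\eqref{eqn:RNloop}, the pushforward of $\tilde Q^\kappa$ under $\prCommonTime{T}$ is $\Pr^\kappa_T$. Because the multiradial drivers never collide, this total mass equals $1$.
\begin{itemize}
\item[] Here is the subtle point: \eqref{eqn:RNloop} is a martingale change of measure up to common time $T$. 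Its density depends only on $\bgamma[0,T]$, and the independent curves beyond $\bindeptime(T)$ are integrated out by the Markov property.
\end{itemize}
Lemma~\ref{thm:tilde-Psi-continuous} gives steady convergence of $\tilde\Psi^\kappa_T$ to a continuous, upper-bounded $\tilde\Psi^0_T$. The generalized Varadhan lemma (Theorem~\ref{thm:Varadhan-general}) then yields an LDP for $\tilde Q^\kappa$ with good rate function
\[
\tilde J(\tilde\bgamma) := \tilde I(\tilde\bgamma) - \tilde\Psi^0_T(\tilde\bgamma) + \sup(\tilde\Psi^0_T-\tilde I).
\]
The normalizing constant is $\lim\kappa\log \tilde Q^\kappa[\Dindpaths]=0$, so the supremum term vanishes. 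The upper bound on $\tilde\Psi^0_T$ supplies the required exponential tightness and moment conditions.

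\textbf{Step 3 (contraction).} The map $\prCommonTime{T}$ is continuous only on the open set $\Dextpath{T}{\boldsymbol{nT}}$ (Lemma~\ref{thm:topological-lemma}). Off this set $\tilde J=\infty$, since $\tilde\Psi^0_T=-\infty$ there. I would apply the contraction principle in the form allowing a map continuous on a set containing $\{\tilde J<\infty\}$; the measures $\tilde Q^\kappa$ are also concentrated on that set. This gives the LDP for $\Pr^\kappa_T$ with good rate function
\[
\nBessel_T(\bgamma)=\inf\{\tilde J(\tilde\bgamma):\prCommonTime{T}(\tilde\bgamma)=\bgamma\}.
\]
The infimum over extensions beyond $\bindeptime(T)$ is attained by extending each curve with zero-energy driver continuation, which gives zero additional energy. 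It therefore equals $\sum_j\Denergy_{\indeptime^j(T)}(\gamma^j\circ(\indeptime^j)^{-1}) - \Psi^0_T(\bgamma)$, which is~\eqref{eqn:BM-loop-energy}. The remaining concern is that the extension stays simple and in $\Dextpath{T}{\boldsymbol{nT}}$; only finitely many conditions are involved, and I expect this to be fine.

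\textbf{Step 4 (Dyson--Dirichlet form).} I would show~\eqref{eqn:BM-loop-energy} equals~\eqref{eq:Dyson-Dirichlet_energy} by a deterministic Girsanov-type identity. For finite-energy $\bgamma$, the independent drivers $\tilde\vartheta^j$ and the common drivers $\theta^j$ are related through $h_{t,j}$ and the time change~\eqref{eqn:indep-time-change}. One then expands $\frac12\int|\dot\theta-2\sum\cot|^2$, integrates the cross term via the chain rule for $\LogPartF(\btheta_t)$, and matches the remaining terms with the $\kappa\to0$ limit of the Itô computation in~\cite{Healey-Lawler:N_sided_radial_SLE}. That computation produces exactly $\tfrac12(n+4)(n-1)nT-12\cL$.

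A cleaner alternative is to apply the finite-dimensional Freidlin--Wentzell LDP to the SDE~\eqref{eq:SDEs}, up to the collision time, which is infinite. This gives the rate function~\eqref{eq:Dyson-Dirichlet_energy} for drivers, as in~\cite{AHP:Large_deviations_of_DBM_and_multiradial_SLE}. Uniqueness of rate functions then forces the two formulas to agree, provided the driver map and the curve map are compatible; on finite-energy sets the Loewner transform is continuous. I expect this identification, and in particular justifying the chain-rule computation for merely absolutely continuous drivers, to be the main obstacle. I would first try the uniqueness route, approximating by smooth drivers where needed.
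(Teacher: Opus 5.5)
Your proposal is correct and follows the paper's proof essentially step by step. You tilt $\prob^\kappa_{\boldsymbol{nT}}$ by $\exp(\tilde\Psi^\kappa_T/\kappa)$ and apply the generalized Varadhan lemma with total mass one. You then restrict to the open set $\Dextpath{T}{\boldsymbol{nT}}$, off which $\tilde\Psi^0_T=-\infty$, and contract through $\prCommonTime{T}$, with the zero-energy (geodesic) extension attaining the infimum. Finally, you identify the Dyson--Dirichlet form by uniqueness of rate functions against the driver LDP of~\cite{AHP:Large_deviations_of_DBM_and_multiradial_SLE}. For that last step, the paper contracts the curve LDP through the inverse Loewner map $\bgamma\mapsto\btheta$, which is continuous on all of $\commonpaths[T]$. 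So you need neither continuity of the forward Loewner transform on finite-energy sets nor the direct Girsanov-type computation.
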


Note that $\tilde\bgamma \in \Dindpaths[\bindeptime(T)]$ comprising $\tilde\gamma^j := \gamma^j \circ (\indeptime_{\bgamma}^j)^{-1}$ are the curves corresponding to $\bgamma$
in the independent parameterization, and $\Denergy_{\indeptime_{\bgamma}^j(T)}(\tilde\gamma^j)$ are their single-radial energies. 

\begin{proof}
Consider the measure $\tilde\Pr^\kappa_T$ constructed by first tilting $\prob^\kappa_{\boldsymbol{nT}}$ by $\exp\big(\tfrac{1}{\kappa}\tilde\Psi^\kappa_T\big)$ from \Cref{thm:tilde-Psi-continuous} to obtain a measure $\tilde\Pr^\kappa_{\bindeptime(T),\boldsymbol{nT}}$, 
then restricting it to $\Dextpath{T}{\boldsymbol{nT}}$, 
and finally taking the pushforward through the projection $\prCommonTime{T}$. 
From this construction, we get\footnote{We denote by $\bE^\kappa_{\boldsymbol{t}}$ the expected value with respect to the measure $\prob^\kappa_{\boldsymbol{t}}$.} 
\begin{align*}
\tilde\Pr^\kappa_T[H] 
= \frac{\tilde\Pr^\kappa_{\bindeptime(T),\boldsymbol{nT}}\big[\prCommonTimeInv{T}H \big]}{\tilde\Pr^\kappa_{\bindeptime(T), \boldsymbol{nT}}\big[\Dextpath{T}{\boldsymbol{nT}}\big]} 
=\;& \frac{\bE^\kappa_{\boldsymbol{nT}}\bigg[\exp\Big(\frac{1}{\kappa}\tilde\Psi^\kappa_T(\tilde\bgamma)\Big)\one\{\tilde\bgamma \in \prCommonTimeInv{T}H\}\bigg]}{\bE^\kappa_{\boldsymbol{nT}}\bigg[\exp\Big(\frac{1}{\kappa}\tilde\Psi^\kappa_T(\tilde\bgamma)\Big)\one\{\tilde\bgamma \in \prCommonTimeInv{T}\commonpaths[T]\}\bigg]}\\
=\;& \frac{\bE^\kappa_{\bindeptime(T)}\bigg[\exp\Big(\frac{1}{\kappa}\Psi^\kappa_T(\bgamma)\Big)\one\{\bgamma \in H\}\bigg]}{\bE^\kappa_{\bindeptime(T)}\bigg[\exp\Big(\frac{1}{\kappa}\Psi^\kappa_T(\bgamma)\Big)\one\{\bgamma \in \commonpaths[T]\}\bigg]} = \Pr^\kappa_T[H],
\end{align*}
for every Borel set $H \subset \commonpaths[T]$; 
the measure $\tilde\Pr^\kappa_T$ thus coincides with the $n$-radial $\SLE_\kappa$ measure $\Pr^\kappa_T$.
\Cref{thm:tilde-Psi-continuous} gives the steady convergence $\smash{\tilde\Psi^\kappa_T \xrightarrow{\kappa \to 0} \tilde\Psi^0_T}$, and the limit $\tilde\Psi^0_T$ is a continuous function bounded from above. 
We can thus apply \Cref{thm:Varadhan-general} from Appendix~\ref{app:Varadhan} to transfer the LDP from the independent measures 
$\prob^\kappa_{\boldsymbol{nT}}$ to the measures $\tilde\Pr^\kappa_{\bindeptime(T), \boldsymbol{nT}}$, 
which have good rate function $\tilde \nBessel_T := \nDindepenergy_{\boldsymbol{nT}} - \tilde\Psi^0_T$. 
The assumptions of the restricted LDP (\Cref{thm:restricted-LDP} stated above) for the measures $\tilde\Pr_{\bindeptime(T),\boldsymbol{nT}}^\kappa$ 
are clearly satisfied, so the LDP is preserved upon restriction to $\Dextpath{T}{\boldsymbol{nT}}$. 
Finally, the LDP for $\Pr^{\kappa}_T = \tilde\Pr^\kappa_T = \tilde \Pr^{\kappa}_{\bindeptime(T), \boldsymbol{nT}} \circ \prCommonTimeInv{T}$ with good rate function $\nBessel_{T} := \nDindepenergy_{\bindeptime(T)} - \Psi^0_T$
follows from the contraction principle and the fact that 
$\nDindepenergy_{\boldsymbol{nT}}(\tilde\bgamma) - \tilde \Psi^0_T(\tilde\bgamma) \ge \nDindepenergy_{\bindeptime(T)}(\bgamma) - \Psi^0_T(\bgamma)$ holds for every $\tilde \bgamma \in \prCommonTimeInv{T}\{\bgamma\}$, 
with equality attained when each $\tilde\gamma^j$ is chosen to be the geodesic extension of $\gamma^j$ in the domain $\bD\smallsetminus\gamma^j$.

It remains to show that $\nBessel_{T}(\bgamma)$ also equals the Dyson-Dirichlet energy $\nBessel_{T}'(\btheta)$ given by the righthand side of Equation~\eqref{eq:Dyson-Dirichlet_energy}.
On the one hand, the contraction principle applied to the inverse Loewner transform $\mathscr{L}_{T}^{-1}$ sending curves $\bgamma$ to their multiradial driving functions $\btheta$ 
(which is continuous by the Loewner-Kufarev theorem, see, e.g.,~\cite[Theorem~8.5]{Berestycki-Norris:Lecture_notes_on_SLE})
shows that the multiradial Loewner driving process of the $n$-radial $\SLE_\kappa$ also satisfies an LDP with rate function $\nBessel_{T} \circ \mathscr{L}_{T}$.
On the other hand, from~\cite[Theorem~1.3]{AHP:Large_deviations_of_DBM_and_multiradial_SLE} 
we know that the multiradial Loewner driving process (namely, the $n$-radial Bessel process~\cite{Healey-Lawler:N_sided_radial_SLE}) satisfies an LDP with rate function $\nBessel_{T}'$. 
Hence, the uniqueness of the LDP rate function~\cite[Lemma~4.1.4)]{Dembo-Zeitouni:Large_deviations_techniques_and_applications} implies that $\nBessel_{T} \circ \mathscr{L}_{T} = \nBessel_{T}'$.
This shows that $\nBessel_{T}(\bgamma)$ also equals the righthand side of Equation~\eqref{eq:Dyson-Dirichlet_energy},
and finishes the proof.
\end{proof}

\section{Infinite time LDP and transience}
\label{sec:LDP}

In this section, we prove our main result: \Cref{thm:n-radial-LDP-infinite-time}. 
In order to
extend the finite-time LDP (\Cref{thm:n-radial-LDP-finite}) to infinite time, the key ingredients will be the escape probability estimates given in \Cref{thm:conditional-escape-probability} and \Cref{thm:escape-estimates-small-kappa}.
Alternative escape estimates (relating to the marginals of the measure) were very recently derived in~\cite{CHW:Multi-time_Loewner_energy_rate_function_for_large_deviation}, where an LDP was independently proven (though also relying on the results developed in~\cite{AHP:Large_deviations_of_DBM_and_multiradial_SLE, Abuzaid-Peltola:Large_deviations_of_radial_SLE0}).

\subsection{Escape estimates}

Denote $\bD_m := e^{-m}\bD$ for $m \in \bN$. 
For two integers $0<\u<\vee$ and for $j \in \{1,\ldots,n\}$, consider the escape events\footnote{Recall that we are working with the completed right-continuous filtration, so that the escape event $\xescapeevent{j}{\u}{\vee} \subset \commonpaths[\infty]$ is an open set for the common-capacity-parameterized curves.} 
$\xescapeevent{j}{\u}{\vee}$ that the curve $\gamma^j$ exits $\overline\bD_\u$ after entering $\bD_\vee$:
\begin{align}\label{eqn:escape-event}
\xescapeevent{j}{\u}{\vee} 
:= \; & \{\bgamma \in \commonpaths[\infty] \cond \gamma[\rhittingtime[\bgamma]^j(\vee), \infty) \not\subset \overline{\bD}_\u\} 
\; \subset \; \commonpaths[\infty] ,
\\
\label{eqn:escape-event-time}
\rhittingtime[\bgamma]^j(\vee)
:= \; & \inf\{t \ge 0 \colon |\gamma^j(t)| < e^{-\vee} \}.
\end{align}

The next result gives explicit deterministic bounds on the entry times $\rhittingtime[\bgamma]^j(\vee)$.
These bounds will be used in proving the escape estimates in \Cref{thm:escape-estimates-small-kappa}.

\begin{lem}\label{thm:rhittingtime-bounds-better}
The stopping times $\rhittingtime[\bgamma]^j(\vee)$ in~\eqref{eqn:escape-event-time} satisfy the deterministic bounds
\begin{align}\label{eqn:rhittingtime-bounds-better}
\vee - \log(4) \le n\rhittingtime[\bgamma]^j(\vee) \le \vee + \tfrac{\log(n)}{2} , \qquad \vee \ge 0 ,
\end{align}
uniformly for all curves $\bgamma \in \commonpaths[\infty]$, starting points $\btheta_0 \in \chamber$, and $j \in \{1,\ldots,n\}$.
\end{lem}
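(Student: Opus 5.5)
Both bounds come from comparing the capacity of the individual curve $\gamma^j[0,t]$ with distances to the origin, via the Koebe theorems, and then converting individual capacity into common time using \Cref{thm:time-change-properties-better}. Write $\tau := \rhittingtime[\bgamma]^j(\vee)$. By continuity of $\gamma^j$ and the definition~\eqref{eqn:escape-event-time}, we have $|\gamma^j(\tau)| = e^{-\vee}$ whenever $\vee > 0$; the case $\vee = 0$ is trivial since then $\tau = 0$. Moreover, for $t < \tau$ we have $\gamma^j[0,t] \subset \overline\bD \smallsetminus \bD_\vee$.

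\emph{Lower bound.} At time $\tau$ the point $\gamma^j(\tau)$, at distance $e^{-\vee}$ from $0$, lies on the hull $\bgamma[0,\tau]$. The inverse map $g_\tau^{-1}\colon \bD \to \bD\smallsetminus\bgamma[0,\tau]$ fixes $0$ and has derivative $e^{-n\tau}$ there. By the Koebe $1/4$-theorem its image contains the disk of radius $\tfrac14 e^{-n\tau}$ around $0$, so
\begin{align*}
e^{-\vee} = |\gamma^j(\tau)| \ge \dist\big(0,\bgamma[0,\tau]\big) \ge \tfrac14 e^{-n\tau} .
\end{align*}
This gives $n\tau \ge \vee - \log 4$. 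This is the same estimate as the one used in the proof of \Cref{thm:uniform-lipschitz-time-change}, but applied to the full hull in common capacity $n\tau$ rather than to a single curve.

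\emph{Upper bound.} For $t < \tau$ the single curve $\gamma^j[0,t]$ avoids the disk $\bD_\vee$, so $\bD_\vee \subset \bD\smallsetminus\gamma^j[0,t]$. Applying the Schwarz lemma to $g^j_t$ restricted to $\bD_\vee$ (equivalently, using monotonicity of capacity) gives $(g^j_t)'(0) \le e^{\vee}$, that is, $\indeptime^j_\bgamma(t) = \rcap(\gamma^j[0,t]) \le \vee$. Combining this with the lower bound of~\eqref{eqn:time-change-comparison}, $\indeptime^j_\bgamma(t) \ge nt - \tfrac{\log n}{2}$, yields $nt \le \vee + \tfrac{\log n}{2}$ for all $t<\tau$. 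Letting $t \uparrow \tau$ gives the claim, and in particular shows that $\tau < \infty$.

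Both estimates are deterministic and involve no dependence on $\btheta_0$, $j$ or $\bgamma$, which gives the claimed uniformity.

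The main point needing care is the upper bound. The naive comparison through the full hull $\bgamma[0,t]$ gives no information, because the other curves may enter $\bD_\vee$ earlier than $\gamma^j$ does. It is essential to pass through the individual capacity $\indeptime^j_\bgamma$ and then use the nontrivial lower comparison $\indeptime^j_\bgamma(t) \ge nt - \tfrac{\log n}{2}$ from \Cref{thm:time-change-properties-better}, which ultimately comes from the derivative bound in \Cref{thm:h-derivative-bound-better}. For $n=1$ that inequality reads $\indeptime^1 = t$, and the bound still holds.
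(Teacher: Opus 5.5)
Your lower bound is correct. Applying Koebe's $1/4$-theorem directly to the full hull $\bgamma[0,\tau]$, $\tau=\rhittingtime[\bgamma]^j(\vee)$, whose capacity is $n\tau$, is a slightly more direct version of the paper's argument, which applies Koebe to $\gamma^j$ alone and then uses $\indeptime^j_\bgamma(t)\le nt$.

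Your upper bound, however, is exactly the paper's argument and inherits a genuine gap: the comparison $\indeptime^j_\bgamma(t)\ge nt-\tfrac{\log n}{2}$ from \Cref{thm:time-change-properties-better} is false. The error comes from the proof of \Cref{thm:h-derivative-bound-better}. Differentiating $\partial_t h_t(x)=\cot\big(\tfrac{h_t(x)-\theta_t}{2}\big)$ in $x$ gives $\partial_t h_t'(x)=-\tfrac12\,h_t'(x)\csc^2\big(\tfrac{h_t(x)-\theta_t}{2}\big)$, so one only obtains $|h_K'(x)|\le e^{-\rcap(K)/2}$. This is sharp: for the slit $K=[r,1]$ one has $g_K(z)=\phi^{-1}\big(\phi(z)/(4\phi(r))\big)$ with $\phi(z)=z/(1+z)^2$, hence $|g_K'(-1)|=\tfrac{2\sqrt r}{1+r}=e^{-\rcap(K)/2}$. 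Accordingly, for $n=2$ and symmetric slits with tips at $\pm r$, one has $\indeptime^1_\bgamma(t)=\log\tfrac{(1+r)^2}{4r}$ and $2t=\log\tfrac{1+r^2}{2r}$. This violates $\indeptime^1_\bgamma(t)\ge 2t-\tfrac{\log 2}{2}$ already at $r=0.1$. The corrected derivative bound gives $\partial_t\indeptime^j_\bgamma\ge e^{nt-\indeptime^j_\bgamma}$, hence $\indeptime^j_\bgamma(t)\ge\log\tfrac{e^{nt}+n-1}{n}\ge nt-\log n$. With this input your argument proves $n\tau\le\vee+\log n$, which suffices for the lemma's later uses.

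A sharper argument cannot rescue the constant $\tfrac{\log n}{2}$, because it is wrong for large $n$. Let $\theta^1_t\equiv0$, and let $\theta^2_t,\ldots,\theta^n_t$ be constant, distinct, and placed symmetrically about $\pi$ in a small window. By reflection symmetry $\gamma^1$ runs along $(0,1]$, and following real points under~\eqref{eqn:multiradial_Loewner_1common} gives
\begin{align*}
n\rhittingtime[\bgamma]^1(\vee)=n\int_{e^{-\vee}}^1\frac{\ud x}{F(x)},\qquad F(x)=x\sum_{k=1}^n\frac{1-x^2}{1-2x\cos\theta^k+x^2}.
\end{align*}
As the window shrinks to $\{\pi\}$, $F(x)\to xQ(x)/(1-x^2)$ with $Q(x)=n-2(n-2)x+nx^2$. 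Since $\tfrac{n(1-x^2)}{xQ(x)}=\tfrac1x-\tfrac{Q'(x)}{Q(x)}$ and $Q(1)=4$, the right-hand side tends to $\vee+\log\tfrac{Q(e^{-\vee})}{4}$, and this tends to $\vee+\log\tfrac n4$ as $\vee\to\infty$. For $n>16$ this exceeds $\vee+\tfrac{\log n}{2}$, so the statement, which is claimed uniformly in $\btheta_0$, fails. The correct upper bound is $\vee+\log n$, while your lower bound $\vee-\log 4$ stands.
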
 

\begin{proof}
Koebe $1/4$-theorem (e.g.~\cite[Theorem~2.10.6]{AIM:Elliptic_partial_differential_equations_and_quasiconformal_mappings_in_the_plane}) 
yields 
\begin{align*}
e^{-\vee} \le \big|\big( g^j_{\indeptime_\bgamma^j(\rhittingtime[\bgamma]^j(\vee))} \big)'(0)\big|^{-1} \le 4\, e^{-\vee} , \qquad e^{-\vee} = \dist \big(0, \gamma^j[0, \rhittingtime[\bgamma]^j(\vee)] \big) ,
\end{align*}
for the \emph{independently} parametrized Loewner chain $(g^j_s)_{s \ge 0}$ 
at time $s=\indeptime_\bgamma^j(\rhittingtime[\bgamma]^j(\vee))$.
By definition of the independent capacity-parametrization, we have 
$\big(g^j_s\big)'(0) = e^{s}$, so
\begin{align}\label{eqn:bounds-indep-hitting-time}
\vee - \log 4 \le \indeptime_\bgamma^j(\rhittingtime[\bgamma]^j(\vee)) \le \vee.
\end{align}
Recalling the bounds~\eqref{eqn:time-change-comparison} from \Cref{thm:time-change-properties-better} at time $t=\rhittingtime[\bgamma]^j(\vee)$ now yields~\eqref{eqn:rhittingtime-bounds-better}.  
\end{proof}

On the event $\xescapeevent{j}{\u}{\vee}$, at least one of the curves must enter $\overline{\bD}_\vee$ by time $\vee/n$, since
\begin{align}\label{eqn:min-rhittingtime-bound}
n \rhittingtime(\vee) = \rcap (\bgamma(\rhittingtime(\vee))) \leq \rcap(\overline{\bD} \smallsetminus \bD_\vee) =\vee,
\qquad 
\textnormal{where}
\quad
\rhittingtime(\vee):= \inf_{j\in\{1, \ldots, n\} }\rhittingtime[\bgamma]^j(\vee).
\end{align}
Together with the lower bound in~\Cref{thm:rhittingtime-bounds-better}, this implies a strong bound for $\rho(\vee)$:
\begin{align*}
\vee - \log 4 \le n\,\rhittingtime(\vee) \le \vee.
\end{align*}
In particular, $\rhittingtime(\vee)$ is deterministically contained in an interval of size $\frac{1}{n} \log 4$.

To better understand the infinite-time escape events $\xescapeevent{j}{\u}{\vee}$, we first consider $\Pr^\kappa_T (\xescapeevent{j}{\u}{\vee})$, the probability of escape in finite time $T>0$.
Recall that $n$-radial $\SLE_\kappa$ is obtained from independent $\SLE_\kappa$ with the common parameterization via the change of measure~\eqref{eqn:RNloop}:
\begin{align*}
\frac{\ud \Pr^\kappa_T}{\ud \probSLEindep}(\bgamma) 
= \frac{\PartF(\btheta_T)}{\PartF(\btheta_0)} \, 
\exp \bigg( \hat\beta_n(\kappa) n T + \frac{\charge(\kappa) }{2} \cL(\bgamma[0,T]) \bigg) 
=: \frac{\RNloop^\kappa_T}{\RNloop^\kappa_0} .
\end{align*}
From~\eqref{eqn:c-kappa-definition}, we see that $\charge(\kappa) \le 0$ when $\kappa \le 8/3$.
Since $\cL$ is positive, we find that
\begin{align} \label{eq:Mgle_bound}
\frac{\RNloop^\kappa_T}{\RNloop^\kappa_0} \leq \frac{\PartF(\btheta_T)}{\PartF(\btheta_0)}  \, e^{\hat\beta_n(\kappa) n T}, \qquad \kappa  \leq 8/3 .
\end{align} 
In particular, we readily obtain a finite-time escape estimate from
\begin{align} \label{eqn:escape-estimate-finite-time}
\Pr^\kappa_T [ \xescapeevent{j}{\u}{\vee} ] 
\leq \frac{\PartF(\btheta_T)}{\PartF(\btheta_0)} \, e^{\hat\beta_n(\kappa) n T} \, \probSLEindep [ \xescapeevent{j}{\u}{\vee} ] 
\leq \frac{\PartF(\btheta_T)}{\PartF(\btheta_0)} \, e^{\hat\beta_n(\kappa) n T} \, \prob^\kappa [ \xescapeevent{j}{\u}{\vee} ] ,
\end{align}
by applying the single curve result from~\cite[Theorem~4.6]{Abuzaid-Peltola:Large_deviations_of_radial_SLE0} to bound $\prob^\kappa [\xescapeevent{j}{\u}{\vee}]$ (see \Cref{thm:conditional-escape-probability}). 
However, we cannot directly obtain an infinite-time bound from~\eqref{eqn:escape-estimate-finite-time}, 
since the term $\exp\big( \hat\beta_n(\kappa) n T \big)$ tends to infinity as $T\to \infty$. 
Nevertheless, for small $\kappa$, an infinite-time escape estimate suitable for large deviation results can be obtained by decomposing $\xescapeevent{j}{\u}{\vee}$ into finite-time escape events 
(thus expressing $\Pr^\kappa_T [\xescapeevent{j}{\u}{\vee}]$ as a geometric series) and applying a certain conditional single-curve escape estimate,  as we will show next.

\begin{lem}\label{thm:conditional-escape-probability} 
There exist universal constants $C \in (0, \infty)$ and $\xi \in \bR$ such that the following holds. 
Fix $\kappa \in (0, 4]$, $\vee > 0$, a simply connected domain $D \subsetneq \bC$ containing $\bD_\vee$, and $x \in \partial D$.
Denote\footnote{We use the evident definitions for $\gamma \sim \prob^\kappa_{(D;x,0)}$ analogous to (\ref{eqn:escape-event},~\ref{eqn:escape-event-time}).} 
by $\prob^\kappa_{(D;x,0)}$ the law of radial $\SLE_\kappa$ on $D$ from $x$ to $0$. 
Then, we have
\begin{align}\label{eqn:conditional-escape-estimate}
\prob^\kappa_{(D;x,0)} \Big[ \gamma[\rhittingtime[\gamma](\vee), \infty) \not\subset \overline{\bD}_\u \Big] 
\le C \,\exp\Big(\tfrac{\xi - (\vee-\u)(8-\kappa)}{2\kappa}\Big), \qquad 0 < \u < \vee-1 .
\end{align}
\end{lem}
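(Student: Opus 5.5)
The plan is to reduce the statement to the single-curve escape estimate of~\cite[Theorem~4.6]{Abuzaid-Peltola:Large_deviations_of_radial_SLE0} for radial $\SLE_\kappa$ in $\bD$, using the domain Markov property together with conformal invariance. That estimate gives, for $\gamma \sim \prob^\kappa_{(\bD;x,0)}$, a bound of the form $C\exp(\tfrac{\xi-(\vee-\u)(8-\kappa)}{2\kappa})$ on the probability that $\gamma$ returns to $\partial\bD_\u$ after first reaching $\bD_\vee$, uniformly in the starting point. The only new feature here is the general domain $D \supset \bD_\vee$. This affects only the part of the curve before $\rhittingtime[\gamma](\vee)$ and the geometry near $\partial D$. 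The event itself concerns only what happens after the curve is well inside $\bD_{\vee}$.

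\textbf{Step 1: Markov property at the entrance time.} Run the curve until $\rho := \rhittingtime[\gamma](\vee)$. Conditionally on $\gamma[0,\rho]$, the remainder $\gamma[\rho,\infty)$ is radial $\SLE_\kappa$ in $D_\rho := D \smallsetminus \gamma[0,\rho]$ (component containing $0$) from $\gamma(\rho)$ to $0$. So it suffices to bound, uniformly over simply connected $D' \ni 0$ and boundary points $y \in \partial D'$ with $|y| = e^{-\vee}$, the probability that $\gamma'\sim\prob^\kappa_{(D';y,0)}$ exits $\overline\bD_\u$. Here $D'$ need not contain $\bD_\vee$, but it does contain a disk around $0$ of radius comparable to $e^{-\vee}$ with $y$ on its boundary.

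\textbf{Step 2: uniformizing.} Let $\phi\colon D'\to\bD$ with $\phi(0)=0$, $\phi'(0)>0$. By Koebe's $1/4$-theorem, $\phi'(0)$ is comparable to $e^{\vee}$. Exiting $\overline\bD_\u$ forces the image curve to reach a set whose harmonic measure from the origin is controlled. Concretely, I would use the Beurling estimate, or Koebe distortion applied to $\phi^{-1}$, to show that $\phi(\partial\bD_\u\cap D')$ lies outside a disk of radius $1-c\,e^{-(\vee-\u)/2}$ about $0$. Equivalently, the preimage under $\phi$ of a neighbourhood of $\partial \bD$ of logarithmic size contains $\bD \smallsetminus \bD_{\u+O(1)}$ within $D'$. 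The event then becomes: radial $\SLE_\kappa$ in $\bD$ from $\phi(y)$ reaches within distance $\asymp e^{-(\vee-\u)/2}$ of $\partial\bD$, or alternatively returns to radius $e^{-\u'}$ after the start, with $\u'$ differing from $\u$ by a bounded amount.

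\textbf{Step 3: the one-point boundary estimate.} For radial $\SLE_\kappa$ in $\bD$, the probability of coming within $\epsilon$ of a boundary arc, or of a given boundary point, away from the start decays like $\epsilon^{(8-\kappa)/\kappa}$ up to constants. I would derive this from the single-curve estimate of~\cite{Abuzaid-Peltola:Large_deviations_of_radial_SLE0}, or directly from the radial Bessel martingale $\sin(\cdot/2)^{(8-\kappa)/\kappa}$. Setting $\epsilon\asymp e^{-(\vee-\u)/2}$ gives $\exp(-(\vee-\u)(8-\kappa)/(2\kappa))$. The bounded shifts of $\u$ and $\vee$ coming from Koebe's theorem are absorbed into $\xi$, and the multiplicative constants into $C$; the restriction $\u<\vee-1$ keeps these shifts harmless.

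The main obstacle is Step 2: making the distortion uniform over arbitrary $D'$, whose boundary may be irregular near $y$. The route I would try first is the Beurling estimate for the harmonic measure of $\partial\bD_\u$ seen from $0$ in $D'$. Combined with the exponent $(8-\kappa)/\kappa$, with a factor $\tfrac12$ from Beurling, this reproduces exactly the exponent $(\vee-\u)(8-\kappa)/(2\kappa)$ in~\eqref{eqn:conditional-escape-estimate}.
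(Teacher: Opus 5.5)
Your skeleton (domain Markov property at $\rhittingtime[\gamma](\vee)$, a crosscut-hitting bound with exponent $(8-\kappa)/\kappa$, and Beurling producing the factor $e^{-(\vee-\u)/2}$) is in line with the paper's. However, Steps 2--3, which link these ingredients, have a genuine gap.

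\textbf{Step 2 is true but carries no information.} Your Beurling bound is correct: $\hat\bC\smallsetminus D'$ is connected and contains both $y$ and $\infty$, so $1-|\phi(z)|\lesssim e^{-(\vee-\u)/2}$ for $z\in\partial\bD_\u\cap D'$. But the image curve starts at $\phi(y)\in\partial\bD$, so it lies in that collar already at time $0$.

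Your alternative reformulation also fails. You propose that the event becomes a return to radius $e^{-\u'}$ with $\u'=\u+O(1)$, and your opening plan is to reduce to \cite[Theorem~4.6]{Abuzaid-Peltola:Large_deviations_of_radial_SLE0} in $\bD$. Since $\phi'(0)\asymp e^{\vee}$, the image of $\partial\bD_\u\cap D'$ sits in a collar of width $O(e^{-(\vee-\u)/2})$ at $\partial\bD$, not near a circle of radius $e^{-\u'}$. So no escape event of that type arises.

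What actually has to be controlled is where the image crosscuts sit relative to the starting point $\phi(y)$, and radial proximity says nothing about this. The set $\partial\bD_\u\cap D'$ can have many, even infinitely many, components. Their images are arbitrary crosscuts inside the collar, possibly long. For instance, a crosscut in the collar can arch over $\phi(y)$, leaving $\phi(y)$ at the bottom of a thin channel. The curve must then either hit the crosscut or traverse the channel, so it hits the crosscut with probability close to one. Such configurations must be excluded by an argument that uses more than Step 2. In particular, the obstacle you flag, uniform distortion near $y$, is not where the difficulty lies; Beurling handles that.

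\textbf{Step 3 does not supply the missing estimate.} The rate $\epsilon^{(8-\kappa)/\kappa}$ is the one-point boundary estimate for an $\epsilon$-ball around a single boundary point at macroscopic distance from the start. For the $\epsilon$-neighbourhood of a macroscopic arc it is false when $\kappa<4$. Summing the one-point bound over $\epsilon^{-1}$ sub-arcs gives $\epsilon^{(8-2\kappa)/\kappa}$, and the two-point boundary estimate shows this order is sharp. This loses a factor $e^{(\vee-\u)/2}$, which cannot be absorbed into $C e^{\xi/(2\kappa)}$.

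To use the point version instead, you would need to show the following, uniformly in $D'$: the images of the components $\beta$ of $\partial\bD_\u\cap D'$ lie in balls of radii $r_\beta$ about boundary points at distance $\ge c$ from $\phi(y)$, with $\sum_\beta r_\beta\lesssim e^{-(\vee-\u)/2}$. A union bound would then close the argument, since $(8-\kappa)/\kappa\ge 1$ for $\kappa\le 4$ gives $\sum_\beta r_\beta^{(8-\kappa)/\kappa}\le(\sum_\beta r_\beta)^{(8-\kappa)/\kappa}$. That localization is the missing idea.

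The paper bypasses it with the conformally invariant crosscut estimate \cite[Proposition~4.7]{Abuzaid-Peltola:Large_deviations_of_radial_SLE0}. This controls the probability of hitting a crosscut through a Brownian excursion measure, regardless of the crosscut's shape or the number of components. The only geometric input needed is then an excursion-measure bound of order $e^{-(\vee-\u)/2}$, obtained via Beurling as in the radial case of \cite[Theorem~4.6]{Abuzaid-Peltola:Large_deviations_of_radial_SLE0}.
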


\begin{proof}
Combining~\cite[Proposition~4.7]{Abuzaid-Peltola:Large_deviations_of_radial_SLE0} with Brownian excursion measure estimates similar to those in the proof of radial case of~\cite[Theorem~4.6]{Abuzaid-Peltola:Large_deviations_of_radial_SLE0} 
shows that the lefthand side of~\eqref{eqn:conditional-escape-estimate} is upper bounded by a universal constant times $\hat c^{8/\kappa}\exp\big(\tfrac{2\zeta - (\vee-\u)(8-\kappa)}{2\kappa}\big)$, where
$\hat c \in (0, \infty)$ and $\zeta \in \bR$ are universal constants coming from the last equation of the proof of~\cite[Theorem~4.6]{Abuzaid-Peltola:Large_deviations_of_radial_SLE0}. 
Choosing $\xi = 16\log \hat c + 2\zeta$ gives the desired bound.
\end{proof}

\begin{prop}\label{thm:escape-estimates-small-kappa}
There exists a constant $\kappa_0 = \kappa_0(n) > 0$ depending only on $n$ such that the following holds.
For each $\u \ge 1$, $\varepsilon \in (0,1)$, and $\M \in [0,\infty)$, there exists $\vee > \u$ such that
\begin{align}\label{eq:escape-estimates-small-kappa}
\Pr^\kappa [ \xescapeevent{j}{\u}{\vee} ] \leq \varepsilon \, e^{- \frac{1}{\kappa} \M} , \qquad \kappa \in (0,\kappa_0) , \; j \in \{1,\ldots,n\}.
\end{align}
\end{prop}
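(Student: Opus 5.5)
The approach is to decompose the infinite-time escape event according to the scale at which the escape happens. Each piece is then controlled by combining three ingredients: the Radon--Nikodym bound~\eqref{eq:Mgle_bound} on a \emph{finite} time window, valid for $\kappa \le 8/3$; the strong Markov property of $n$-radial $\SLE_\kappa$ in the common parameterization; and the conditional single-curve estimate of \Cref{thm:conditional-escape-probability}. Since $\kappa \mapsto \hat\beta_n(\kappa)\kappa$ stays bounded as $\kappa \to 0$, the growth factor $e^{\hat\beta_n(\kappa) n T}$ over a window of length $T \asymp 1/n$ costs only $e^{O(1)/\kappa}$. Meanwhile \Cref{thm:conditional-escape-probability} gains a factor $e^{-(8-\kappa)/(2\kappa)}$ per unit of radial scale traversed. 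For $\kappa<\kappa_0(n)$ the gain dominates, and the geometric series converges.

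Concretely, for integers $m \ge \vee$ we write $\xescapeevent{j}{\u}{\vee} \subset \bigcup_{m \ge \vee} E_m$. Here $E_m$ is the event that $\gamma^j$ exits $\overline{\bD}_\u$ during the time window $[\rhittingtime(m), \rhittingtime(m+1)]$, after having entered $\bD_\vee$. By \Cref{thm:rhittingtime-bounds-better} and~\eqref{eqn:min-rhittingtime-bound}, these are deterministic windows of length at most $(1+\log 4 + \tfrac{1}{2}\log n)/n$, and every time after $\rhittingtime[\bgamma]^j(\vee)$ lies in some window with $m \ge \vee - O(1)$.

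To bound $\Pr^\kappa[E_m]$, we first condition on $\mathcal F_{\rhittingtime(m)}$. We map out $\bgamma[0,\rhittingtime(m)]$ by $g_{\rhittingtime(m)}$, whose image domain contains a disk of radius $\asymp e^{-m} e^{n\rhittingtime(m)} \asymp 1$, by Koebe. On the next window, which has bounded length, we compare the conditional law with $n$ independent radial $\SLE_\kappa$ via~\eqref{eqn:RNloop}, dropping the loop term since $\charge(\kappa)\le 0$. The partition function ratio $\PartF(\btheta_{t})/\PartF(\btheta_{\rhittingtime(m)}) \le \PartF(\btheta_t)\cdot(\cdots)$ is the delicate factor. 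Since $\PartF \le 1$, we bound the numerator by $1$, and we handle the denominator by working with the \emph{unconditioned} ratio from time $0$. This telescopes into $\PartF(\btheta_0)^{-1} e^{\hat\beta_n(\kappa) n \rhittingtime(m+1)}$, so it is better to apply the comparison over the whole interval $[0,\rhittingtime(m+1)]$ at once.

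The escape itself requires the $j$-th independent curve, viewed in the original domain $\bD\smallsetminus(\text{other curves})$, to return from $\bD_{m}$ out to $\overline\bD_\u$. This event is contained in the escape event of \Cref{thm:conditional-escape-probability} applied to $\gamma^j$ in its own domain. That gives
\begin{align*}
\Pr^\kappa[E_m] \le \PartF(\btheta_0)^{-1}\, e^{\hat\beta_n(\kappa)(m + c_n)}\, C\, e^{(\xi-(m-\u)(8-\kappa))/(2\kappa)} .
\end{align*}
Now $\kappa\hat\beta_n(\kappa) \to (n-1)(16+4n)/8 =: b_n$ and $\kappa\log\PartF(\btheta_0)^{-1} = \LogPartF(\btheta_0)$. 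Hence the exponent is $\tfrac{1}{\kappa}\big(b_n m - 4(m-\u) + O_{n,\btheta_0}(1)\big)$ up to $o(1/\kappa)$ corrections.

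This is exactly the main obstacle: $b_n > 4$ for $n\ge2$, so the naive comparison over $[0,\rhittingtime(m+1)]$ loses. One therefore cannot pay $e^{\hat\beta_n n T}$ from time $0$. Instead we use the Markov property at $\rhittingtime(m)$ together with the \emph{conditional} martingale $\RNloop_t/\RNloop_{\rhittingtime(m)}$ over a window of bounded length. This requires a lower bound on $\PartF(\btheta_{\rhittingtime(m)})$.

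To get that lower bound, we split further on the event $\{\LogPartF(\btheta_{\rhittingtime(m)}) > A\}$. For small $\kappa$, the $n$-radial Bessel driver has this event with probability at most $e^{-cA/\kappa}$ after a short time, by comparison with the LDP of~\cite{AHP:Large_deviations_of_DBM_and_multiradial_SLE} for the Bessel process, or via the supermartingale $\PartF^{-\epsilon}$. On its complement, the per-window cost is $e^{(b_n/n\,\cdot O(1) + A)/\kappa}$, while \Cref{thm:conditional-escape-probability} gains $e^{-4(m-\u)/\kappa}$.

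Choosing $A$ large, then $\kappa_0$ small so that $8-\kappa \ge 7$, and summing the geometric series in $m\ge\vee$ gives $\Pr^\kappa[\xescapeevent{j}{\u}{\vee}] \le C' e^{(K_n + A - 3(\vee-\u))/\kappa}$. Taking $\vee$ large enough that $3(\vee-\u) > K_n + A + \M + \kappa_0\log(C'/\varepsilon)$ yields~\eqref{eq:escape-estimates-small-kappa}.
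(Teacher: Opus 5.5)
Your architecture matches the paper's: windows of bounded common-time length, the Markov property near each window so that the tilt $e^{\hat\beta_n(\kappa)n(\cdot)}$ is paid only over $O(1)$ time, the loop term dropped because $\charge(\kappa)\le 0$, and \Cref{thm:conditional-escape-probability} for the gain. You also correctly isolate the crux, the factor $1/\PartF(\btheta_{\rhittingtime(m)}) = e^{\LogPartF(\btheta_{\rhittingtime(m)})/\kappa}$ at the Markov time.

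\textbf{The gap.} Your treatment of that factor does not work. With a fixed threshold $A$, the bad events $B_m := \{\LogPartF(\btheta_{\rhittingtime(m)}) > A\}$ never enter your final bound $C' e^{(K_n + A - 3(\vee-\u))/\kappa}$, and they cannot be added. The only available estimate, $\Pr^\kappa[E_m \cap B_m] \le \Pr^\kappa[B_m]$, must be summed over infinitely many $m \ge \vee$. For fixed $\kappa$ the driver is positive recurrent with invariant density proportional to $(\PartF)^4$, which charges $\{\LogPartF > A\}$. Hence these events do not become rare as $m\to\infty$, and the series diverges.

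The tail bound itself is also not supplied by the tools you name. The finite-time LDP for the $n$-radial Bessel process only gives $\kappa\to 0$ asymptotics on a fixed window from a fixed starting point. It does not give a bound uniform over infinitely many times and valid for all $\kappa<\kappa_0$. Moreover, $(\PartF)^{-\epsilon}$ is not a supermartingale. It\^o's formula and the identity $\Delta\LogPartF = \tfrac12|\nabla\LogPartF|^2 + \tfrac{n(n^2-1)}{6}$ show that the drift of $(\PartF(\btheta_t))^{-\epsilon}$ is $(\PartF(\btheta_t))^{-\epsilon}\big(\tfrac{1}{\kappa}(\tfrac{\epsilon^2}{2}-2\epsilon+\tfrac{\epsilon\kappa}{4})|\nabla\LogPartF(\btheta_t)|^2 + \tfrac{\epsilon n(n^2-1)}{12}\big)$. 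This is strictly positive near the equally spaced configuration, where $\nabla\LogPartF=0$.

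\textbf{The missing idea.} Do not truncate. Keep $1/\PartF$ inside the expectation and bound its first moment with a growth rate carrying no factor $1/\kappa$. This is \Cref{lem:pf_estimate}: $\Ex^\kappa[1/\PartF(\btheta_t)] \le e^{n(n^2-1)t/12}/\PartF(\btheta_0)$ for $\kappa\le 4$, proved in the paper via Girsanov and Cauchy--Schwarz. Your supermartingale idea, corrected by a time factor, gives the same: by the drift formula with $\epsilon=1$, the process $e^{-n(n^2-1)t/12}/\PartF(\btheta_t)$ is a supermartingale for $\kappa\le 4$.

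This growth $e^{O(m)}$ is beaten by the escape gain $e^{-(8-\kappa)nm/(2\kappa)}$ (common time $m$, scale $nm$) exactly when $\kappa(n^2-1)/12 < (8-\kappa)/2$. That competition is the source of the $n$-dependence of $\kappa_0$; the paper takes $\kappa_0=1/n^2$. Your choice ``$8-\kappa\ge 7$'' does not reflect it.

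\textbf{Secondary points.} The paper conditions at the deterministic time $\tau_m = m - (l_n+\log 4)/n$, which precedes $\rhittingtime[\bgamma]^i(nm-l_n)$ for every $i$. It then applies the lemma to the event $A^j_m$ of an exit after this fresh entrance. If you condition at $\rhittingtime(m)$, the curve $\gamma^j$ may already have entered the relevant disk, and its tip may be near $\partial\bD_\u$. So the lemma does not directly bound an exit during $[\rhittingtime(m),\rhittingtime(m+1)]$. Finally, $\lim_{\kappa\to 0}\kappa\hat\beta_2(\kappa)=3<4$, so the obstruction you identify for comparing from time $0$ is real only for $n\ge 3$.
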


\cite[Theorem~4.6]{Abuzaid-Peltola:Large_deviations_of_radial_SLE0} gives the case $n=1$ with $\kappa_0 = 4$.
The number $\vee = \vee(\btheta_0,\u,\varepsilon,\M,n)$ in~\eqref{eq:escape-estimates-small-kappa} depends on $\u$, $\varepsilon$, $\M$, and $n$, as well as on the starting point $\btheta_0 \in \chamber$ of the evolution. 

\begin{proof}
Write $l_n = \frac{\log(n)}{2}$ and $v_n = l_n + \log(4) + n + 2$.
First, let us fix $0 < \u < \vee - v_n + 1$. 
For each $T \ge 0$, define the event
\begin{align}\label{eq:xescapeevent_T}
\xescapeevent{j}{\u}{\vee}(T)
:= \; & \{\bgamma \in \commonpaths[\infty] \cond \gamma^j[\rhittingtime[\bgamma]^j(\vee), T] \not\subset \overline{\bD}_\u\}.
\end{align}
Note that the events $\xescapeevent{j}{\u}{\vee}(T)$ are $\Pr^\kappa_T$-measurable and satisfy $\xescapeevent{j}{\u}{\vee} = \bigcup_{T \ge 0}\xescapeevent{j}{\u}{\vee}(T)$. By monotone convergence, we may thus decompose
\begin{align*}
\Pr^\kappa [ \xescapeevent{j}{\u}{\vee} ] 
= \sum_{\mindex = 0}^\infty \Pr^\kappa_{\mindex+1} \big[ \xescapeevent{j}{\u}{\vee}(\mindex+1) \smallsetminus \xescapeevent{j}{\u}{\vee}(\mindex) \big] .
\end{align*}
By~\Cref{thm:rhittingtime-bounds-better}, the time when the $j^{th}$ curve enters level $\vee$ satisfies
$\rhittingtime[\bgamma]^j(\vee) \ge \lfloor\frac{\vee - \log(4)}{n}\rfloor =: \mindexMin$, so $\xescapeevent{j}{\u}{\vee}(\mindex+1) = \emptyset$ for all $\mindex < \mindexMin$, and the corresponding summands vanish.  
For $\mindex \ge \mindexMin$, 
on the complement of the event $\xescapeevent{j}{\u}{\vee}(\mindex)$, we have $\gamma^j[\rhittingtime[\bgamma]^j(\vee), \mindex] \subset \overline\bD_\u$, while by~\Cref{thm:rhittingtime-bounds-better} we have 
$\mindex \ge \rhittingtime[\bgamma]^j(n\mindex - l_n)$. 
On the event $\xescapeevent{j}{\u}{\vee}(\mindex+1)$, this implies that
\begin{align*}
A_\mindex^j := \big\{ \gamma^j\big[\rhittingtime[\bgamma]^j(n\mindex - l_n), \mindex+1 \big] \not\subset \overline\bD_\u \big\}
\end{align*} 
occurs, so we obtain
\begin{align}\label{eqn:escape-probability-decomposition-2}
\Pr^\kappa [ \xescapeevent{j}{\u}{\vee} ] 
\le\;& \sum_{\mindex=\mindexMin}^\infty \Pr^\kappa_{\mindex+1} [ A_\mindex^j ] .
\end{align}
By \Cref{thm:rhittingtime-bounds-better}, we have 
\begin{align*}
\underset{1 \le j \le n}{\min} \,\rhittingtime[\bgamma]^j(n\mindex - l_n) 
\geq \mindex - \frac{l_n + \log(4)}{n} := \tau_\mindex ,
\end{align*}
so $\bD_{\mindex} \subset \bD\smallsetminus\boldsymbol\eta$ for every $\boldsymbol \eta \in \commonpaths[\tau_\mindex]$. 
Estimating the Radon-Nikodym derivative as in~\eqref{eq:Mgle_bound} with $\kappa  \leq 8/3$ and using the tower property, we obtain the bound
\begin{align*}
\Pr^\kappa_{\mindex+1} [A_\mindex^j] 
= \; & 
\bE^\kappa_{\bindeptime(\mindex+1)} \bigg[ \bE^\kappa_{\bindeptime(\mindex+1)} \bigg[ \frac{\RNloop^\kappa_{\mindex+1}}{\RNloop^\kappa_0} \, \one_{A_\mindex^j} \cond \mathcal{F}_{\tau_\mindex} \bigg] \bigg] \\
= \; & 
\bE^\kappa_{\bindeptime(\mindex+1)} \bigg[ \frac{\RNloop^\kappa_{\tau_\mindex}}{\RNloop^\kappa_0} \, \bE^\kappa_{\bindeptime(\mindex+1)} \bigg[ \frac{\RNloop^\kappa_{\mindex+1}}{\RNloop^\kappa_{\tau_\mindex}} \, \one_{A_\mindex^j} \cond \mathcal{F}_{\tau_\mindex} \bigg] \bigg]
\\
\leq \; & e^{\hat\beta_n(\kappa) n(\mindex+1-\tau_\mindex)} \, \Ex^\kappa \bigg[ \frac{\PartF(\btheta_{\mindex+1})}{\PartF(\btheta_{\tau_\mindex})} \, \bE^\kappa \big[ \one_{A_\mindex^j} \cond \mathcal{F}_{\tau_\mindex} \big] \bigg].
&& \textnormal{[by~\eqref{eq:Mgle_bound}]}
\end{align*}
On the one hand, by the domain Markov property of radial $\SLE_\kappa$ we can estimate the conditional probability on the righthand side using \Cref{thm:conditional-escape-probability}:
as $0 < \u < n\mindex - l_n - 1$ (here, we need $0 < \u < \vee - v_n + 1$), 
\begin{align*}
\bE^\kappa \big[ \one_{A_\mindex^j} \cond \mathcal{F}_{\tau_\mindex} \big]
\leq \bE^\kappa \Big[ \one_{\{ \gamma^j [\rhittingtime[\bgamma]^j(n\mindex - l_n), \infty ) \not\subset \overline\bD_\u \}} \cond \mathcal{F}_{\tau_\mindex} \Big]
\leq \; & C \,\exp\Big(\tfrac{\xi - (n\mindex - l_n -\u)(8-\kappa)}{2\kappa}\Big) . 
\end{align*}
On the other hand, by \Cref{lem:pf_estimate} stated below and because $\PartF \leq 1$, for $\kappa \leq 4$ we have 
\begin{align}\label{eq:pf_estimate_pf}
\Ex^\kappa \bigg[ \frac{\PartF(\btheta_{\mindex+1})}{\PartF(\btheta_{\tau_\mindex})} \bigg]
\leq \Ex^\kappa \bigg[ \frac{1}{\PartF(\btheta_{\tau_\mindex})} \bigg]
\leq \frac{\exp \Big( \frac{n(n^2-1)}{12} \, \tau_\mindex \Big) }{\PartF(\btheta_{0})} . 
\end{align}
Collecting these estimates into~\eqref{eqn:escape-probability-decomposition-2}, we obtain
\begin{align*}
\Pr^\kappa [ \xescapeevent{j}{\u}{\vee} ] 
\le\;& \sum_{\mindex=\mindexMin}^\infty \Pr^\kappa_{\mindex+1} [ A_\mindex^j ] 
\le \frac{C \, e^{ \frac{1}{\kappa} \A(n,\u) }}{\PartF(\btheta_{0})} \,
\sum_{\mindex=\mindexMin}^\infty e^{ - \frac{1}{\kappa} \mindex \B(n,\kappa) } , \qquad \kappa \leq \kappa_0 = \kappa_0(n) := \frac{1}{n^2} ,
\end{align*}
where (recalling the formula~\eqref{eqn:c-kappa-definition} for $\hat\beta_n(\kappa)$)
\begin{align*}
\A(n, \u) = \; &  \frac{\xi}{2} + 4(l_n+\u) + \frac{(n-1) (4 + n)}{2} (n + l_n + \log(4)) ,
\\
\B(n,\kappa) = \; & n \bigg( \frac{8-\kappa}{2} - \frac{\kappa(n^2-1)}{12} \bigg) \geq 0 , \qquad \kappa \leq \kappa_0(n) := \frac{1}{n^2} ,
\end{align*}
and the function $\kappa \mapsto \B(n,\kappa)$ is decreasing and strictly positive in the interval $\kappa \in (0,\kappa_0(n)]$. 
Estimating the geometric series as 
\begin{align}\label{eq:geometric_series}
\sum_{\mindex=\mindexMin}^\infty e^{- \frac{1}{\kappa} \mindex \B(n,\kappa)} 
= \frac{e^{-\frac{1}{\kappa} \mindexMin \B(n,\kappa)} }{1-e^{-\frac{1}{\kappa} \B(n,\kappa)} } 
\le \frac{e^{\frac{1}{\kappa} \B(n,\kappa)}  }{1-e^{-\frac{1}{\kappa} \B(n,\kappa)} } \, e^{- \frac{1}{\kappa}  \big( \frac{\vee - \log(4)}{n} \big) \B(n,\kappa)} , 
\end{align}
noting that $\kappa \mapsto (1-e^{-\frac{1}{\kappa} \B(n,\kappa)})^{-1}$ is increasing in $\kappa$, 
and using the formulas~(\ref{eq:LogPartF},~\ref{eq:PartF}) for the partition functions $\LogPartF = -\kappa \log \PartF$, 
with $C_n = C \, (1-\exp(-\frac{1}{\kappa_0} \B(n,\kappa_0)))^{-1}$, we see that
\begin{align*}
\Pr^\kappa [ \xescapeevent{j}{\u}{\vee} ] 
\le\;& C_n \, \exp \bigg(\frac{1}{\kappa} \Big( \A(n,\u) + \B(n,\kappa) + \LogPartF(\btheta_{0}) 
+ \frac{\log(4)}{n} \, \B(n,\kappa) - \frac{\vee}{n} \, \B(n,\kappa) \Big) \bigg)
\leq \varepsilon \, e^{- \frac{1}{\kappa} \M} 
\end{align*}
if we choose 
$\vee \geq \vee_0 \wedge v_n$ when $C_n \leq \varepsilon$
and 
$\vee \geq (\vee_0  \wedge v_n) + \frac{n\, \log C_n}{\B(n,\kappa_0)} - \frac{n \, \log \varepsilon }{\B(n,\kappa_0)}$ when $C_n > \varepsilon$, where
\begin{align*}
\vee_0 = \vee_0(\btheta_0,\u,\varepsilon,\M,n) := \; & 
n + \log(4)  + \frac{n \, \A(n,\u)}{\B(n,\kappa_0)} + \frac{n \, \LogPartF(\btheta_{0}) }{\B(n,\kappa_0)} 
+ \frac{n \, \M}{\B(n,\kappa_0)} .
\end{align*}
This concludes the proof.
\end{proof}

\begin{lem}\label{lem:pf_estimate}
For any $t \geq 0$, we have
\begin{align} \label{eq:partF_boud}
\Ex^\kappa \bigg[ \frac{1}{\PartF(\btheta_{t})}  \bigg] 
\leq \frac{\exp \Big( \frac{n(n^2-1)}{12} \, t \Big)}{\PartF(\btheta_{0})}  , \qquad \kappa \leq 4 .
\end{align}
\end{lem}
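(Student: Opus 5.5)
The plan is to apply Itô's formula to $f(\btheta_t) := 1/\PartF(\btheta_t)$ along the $n$-radial Bessel dynamics~\eqref{eq:SDEs}, show that the drift of $f$ is bounded by $\tfrac{n(n^2-1)}{12}\, f$, and conclude with a Grönwall argument after localization. The structure that makes this work is that the drift in~\eqref{eq:SDEs} is a gradient of the same function that defines $\PartF$. Indeed, $f = \exp(\LogPartF/\kappa)$ by~\eqref{eq:LogPartF} and~\eqref{eq:PartF}, and
\begin{align*}
\partial_j \LogPartF(\boldsymbol{\alpha}) = -\sum_{i \neq j} \cot\Big(\tfrac{\alpha^j-\alpha^i}{2}\Big) ,
\end{align*}
so the drift in~\eqref{eq:SDEs} is $b_j = -2\,\partial_j \LogPartF$. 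The generator $\mathcal{A} = \sum_j b_j \partial_j + \tfrac{\kappa}{2}\Delta$ then gives
\begin{align*}
\mathcal{A} f = f \Big( -\tfrac{3}{2\kappa} |\nabla \LogPartF|^2 + \tfrac12 \Delta \LogPartF \Big) ,
\qquad
\Delta \LogPartF = \sum_{i<j} \csc^2\Big(\tfrac{\alpha^j-\alpha^i}{2}\Big) = \sum_{i<j}\Big(1 + \cot^2\Big(\tfrac{\alpha^j-\alpha^i}{2}\Big)\Big) .
\end{align*}

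The key algebraic step is to expand $|\nabla\LogPartF|^2 = \sum_j \big(\sum_{i\neq j}\cot(\cdot)\big)^2$ into the diagonal part $2\sum_{i<j}\cot^2(\cdot)$ plus cross terms over triples. I will handle the cross terms with the classical identity $\cot a\cot b + \cot b\cot c + \cot c\cot a = 1$, valid when $a+b+c \in \pi\bZ$. Applied to the half-differences of three angles (with the wrap-around convention $\theta^{n+j}=\theta^j+2\pi$), this should turn the cross terms into an explicit constant multiple of $\binom{n}{3}$, independent of the configuration. Since $\kappa \le 4$ gives $\tfrac{3}{2\kappa} \ge \tfrac38$, the $\cot^2$ contributions combine to a nonpositive quantity ($-\tfrac{3}{\kappa} + \tfrac12 \le -\tfrac14$). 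What remains is a configuration-independent constant built from $\tfrac12\binom{n}{2}$ and the triple-term constant. The aim is to arrange that this constant is at most $\tfrac{n(n^2-1)}{12}$ uniformly in $\kappa \in (0,4]$, yielding $\mathcal{A} f \le \tfrac{n(n^2-1)}{12} f$. I expect this sign and constant bookkeeping to be the main obstacle. The delicate point is that the triple-term constant is multiplied by $\tfrac{1}{\kappa}$, so it must enter with a favorable sign, or be absorbed, for the bound to be uniform as $\kappa \to 0$. I would first verify the sign of the cross-term identity on the case $n=3$, and check the case of equally spaced points, where $\nabla\LogPartF = 0$, against the target constant.

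Given the generator bound, the rest is standard localization. Let $\sigma_m$ be the first time some pair of points satisfies $|e^{\ii\theta^i_t}-e^{\ii\theta^j_t}| \le 1/m$. On $[0,\sigma_m]$ the function $f$ is smooth and bounded, so $e^{-ct} f(\btheta_{t})$, with $c = \tfrac{n(n^2-1)}{12}$, is a supermartingale up to $\sigma_m$. Optional stopping then gives $\Ex^\kappa[e^{-c(t\wedge\sigma_m)} f(\btheta_{t\wedge\sigma_m})] \le f(\btheta_0)$. Since the collision time is almost surely infinite (\cite[Corollary~2.5]{AHP:Large_deviations_of_DBM_and_multiradial_SLE}), $\sigma_m \uparrow \infty$ almost surely. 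Letting $m\to\infty$ and applying Fatou's lemma yields $\Ex^\kappa[f(\btheta_t)] \le e^{ct} f(\btheta_0)$, which is~\eqref{eq:partF_boud}.
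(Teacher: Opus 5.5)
Your generator identity $\mathcal{A}f = f\big(-\tfrac{3}{2\kappa}|\nabla\LogPartF|^2+\tfrac12\Delta\LogPartF\big)$ and the localization/Fatou conclusion are fine. The step you flag as the main obstacle, however, fails as you describe it. The triple identity gives $|\nabla\LogPartF|^2 = 2\sum_{i<j}\cot^2\big(\tfrac{\alpha^j-\alpha^i}{2}\big) - 2\binom{n}{3}$, since each triple contributes $-2$. Hence
\begin{align*}
\frac{\mathcal{A}f}{f} = \Big(\tfrac12-\tfrac{3}{\kappa}\Big)\sum_{i<j}\cot^2\Big(\tfrac{\alpha^j-\alpha^i}{2}\Big) + \tfrac{3}{\kappa}\binom{n}{3} + \tfrac12\binom{n}{2} .
\end{align*}
So the triple constant enters with the unfavorable sign. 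If you discard the nonpositive $\cot^2$ part, the remaining constant $\tfrac{3}{\kappa}\binom{n}{3}+\tfrac12\binom{n}{2}$ blows up as $\kappa\to0$. It also already exceeds $\tfrac{n(n^2-1)}{12} = \tfrac12\binom{n}{2}+\tfrac12\binom{n}{3}$ at $\kappa=4$ for every $n\ge3$. No bookkeeping within that plan gives the lemma. What is missing is the lower bound $\sum_{i<j}\cot^2\big(\tfrac{\alpha^j-\alpha^i}{2}\big)\ge\binom{n}{3}$; that is, you must not throw away the square $|\nabla\LogPartF|^2\ge0$.

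The repair is to use the triple identity only inside the Laplacian term, $\Delta\LogPartF = \binom{n}{2}+\binom{n}{3}+\tfrac12|\nabla\LogPartF|^2$. This yields
\begin{align*}
\mathcal{A}f = f\Big[\Big(\tfrac14-\tfrac{3}{2\kappa}\Big)|\nabla\LogPartF|^2 + \tfrac{n(n^2-1)}{12}\Big] \le \tfrac{n(n^2-1)}{12}\, f , \qquad \kappa\le 6 ,
\end{align*}
with equality at equally spaced configurations. Your supermartingale argument then finishes the proof.

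This is exactly the recombination the paper performs, but the paper applies it to a different object. It uses the exponent of the Girsanov martingale built from $(\PartF)^2$, which tilts Brownian motion into the $n$-radial Bessel process, and obtains $\tfrac{\kappa-4}{2\kappa}|\nabla\LogPartF|^2+\tfrac{n(n^2-1)}{6}$. This gives $\Ex^\kappa\big[(\PartF(\btheta_0)/\PartF(\btheta_t))^2\big]\le e^{n(n^2-1)t/6}$ for $\kappa\le4$, followed by Cauchy--Schwarz. Your repaired route bounds the first moment directly, needs no change of measure, and works for $\kappa\le6$. The paper's route gives the stronger second-moment bound, but only for $\kappa\le4$.
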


\begin{proof}
By~\cite[Corollary~3.4]{AHP:Large_deviations_of_DBM_and_multiradial_SLE}, the driving functions for $n$-radial $\SLE_\kappa$ with the common parameterization (\Cref{def:n-radial_SLE_driving_functions}) 
are obtained from tilting the measure of independent standard Brownian motions
by the martingale from~\cite[Corollary~2.5]{AHP:Large_deviations_of_DBM_and_multiradial_SLE} with $\mathcal{E} = (\PartF)^2$:
\begin{align*}
M_t := \big(\PartF(\btheta_t)\big)^2 \exp \bigg( \! - \frac12 \int_0^t \frac{\Delta \big( \PartF(\btheta_s) \big)^2 }{\big(\PartF(\btheta_s) \big)^2 } \ud s \bigg) .
\end{align*}
Therefore, we have 
\begin{align*}
\Ex^\kappa \bigg[ \bigg( \frac{\PartF(\btheta_{0}) }{\PartF(\btheta_{t})} \bigg)^2 \bigg]
= \mathbb{W}_{t} \bigg[ \frac{M_{t}}{M_0} \, \bigg( \frac{\PartF(\btheta_{0}) }{\PartF(\btheta_{t})} \bigg)^2 \bigg]
= \mathbb{W}_{t} \Bigg[  \exp\bigg( \! - \frac12 \int_0^{t} \frac{\Delta \big( \PartF(\btheta_s) \big)^2 }{\big(\PartF(\btheta_s) \big)^2 } \ud s \bigg) \Bigg].
\end{align*}
Using~\cite[Lemma 5.1]{Healey-Lawler:N_sided_radial_SLE}, we find that 
\begin{align*}
- \frac12\frac{\Delta \big( \PartF(\btheta_s) \big)^2 }{\big(\PartF(\btheta_s) \big)^2 } 
= \; &
- \frac{2}{\kappa} \sum_{j=1}^n \Bigg( \sum_{\substack{1 \leq i \leq n \\[.1em] i\neq j}} \cot \bigg( \frac{\theta^j_t-\theta^i_t}{2}\bigg) \Bigg)^2
+ \frac12 \sum_{j=1}^n \sum_{\substack{1 \leq i \leq n \\[.1em] i \neq j}}  \csc^2 \bigg( \frac{ \theta^j_t - \theta^i_t }{2} \bigg) 
\\
= \; & \frac{\kappa-4}{2\kappa} \sum_{j=1}^n \Bigg( \sum_{\substack{1 \leq i \leq n \\[.1em] i\neq j}} \cot \bigg( \frac{\theta^j_t-\theta^i_t}{2}\bigg) \Bigg)^2 + \frac{n(n^2-1)}{6} 
\\
\leq \; & \frac{n(n^2-1)}{6}  , \qquad \kappa \leq 4 .
\end{align*}
Using the Cauchy-Schwarz inequality, 
we obtain the estimate~\eqref{eq:partF_boud}.
\end{proof}

\subsection{Transience}

As a consequence of the estimate in \Cref{thm:escape-estimates-small-kappa}, we obtain immediately that $n$-radial $\SLE_\kappa$ curves with small values of $\kappa$ are transient.
In fact, if we allow the multiplicative constant in the escape estimate~\eqref{eq:escape-estimates-small-kappa}
to depend on $\kappa$, we obtain transience for all $\kappa \in (0,8/3]$. 

\begin{prop}\label{thm:escape-estimates}
Fix $\kappa \in (0,8/3]$ and $n \ge 2$. 
There exists a constant $C_n(\kappa) \in (0,\infty)$ depending only on $\kappa$ and $n$ such that for 
$\vee > 2n + 2\log(4) +  \frac{\log(n)}{2}$, we have 
\begin{align*}
\Pr^\kappa [ \xescapeevent{j}{\u}{\vee} ]  \le C_n(\kappa) \, \exp\Big(\tfrac{(\u-\vee)(8-\kappa)}{2\kappa}\Big) , \qquad
0 < \u < \vee - \tfrac{\log(n)}{2}  - \log(4) - n - 1 
, \; j \in \{1,\ldots,n\}.
\end{align*}
\end{prop}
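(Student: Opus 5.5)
The plan is to rerun the proof of \Cref{thm:escape-estimates-small-kappa}, now with $\kappa \in (0,8/3]$ fixed. We no longer need to track the $\kappa$-dependence of constants through the factor $e^{\frac1\kappa(\cdot)}$; we only need the geometric series in $\mindex$ to converge with rate $e^{-n\mindex(8-\kappa)/(2\kappa)}$.

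First, decompose as in~\eqref{eqn:escape-probability-decomposition-2}: $\Pr^\kappa[\xescapeevent{j}{\u}{\vee}] \le \sum_{\mindex\ge\mindexMin}\Pr^\kappa_{\mindex+1}[A^j_\mindex]$, where $\mindexMin=\lfloor(\vee-\log 4)/n\rfloor$ and $\tau_\mindex = \mindex-(l_n+\log 4)/n$. Next, apply the tower property at $\tau_\mindex$, the bound~\eqref{eq:Mgle_bound} (valid since $\kappa\le 8/3$) and $\PartF\le 1$, and \Cref{thm:conditional-escape-probability} via the domain Markov property. This gives
\begin{align*}
\Pr^\kappa_{\mindex+1}[A^j_\mindex] \le e^{\hat\beta_n(\kappa) n(1+(l_n+\log 4)/n)}\, C\, e^{\frac{\xi-(n\mindex-l_n-\u)(8-\kappa)}{2\kappa}}\; \Ex^\kappa\big[\PartF(\btheta_{\tau_\mindex})^{-1}\big] .
\end{align*}
The hypothesis $\u<\vee-\tfrac{\log n}{2}-\log 4-n-1$ is exactly what guarantees $\u<n\mindex-l_n-1$ for all $\mindex\ge\mindexMin$, so \Cref{thm:conditional-escape-probability} applies. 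The hypothesis on $\vee$ guarantees $\mindexMin\ge 1$, so that $\tau_\mindex\ge 0$.

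The main obstacle is the factor $\Ex^\kappa[\PartF(\btheta_{\tau_\mindex})^{-1}]$. \Cref{lem:pf_estimate} only gives growth $e^{n(n^2-1)\tau_\mindex/12}$. This beats the decay $e^{-n\mindex(8-\kappa)/(2\kappa)}$ only when $\B(n,\kappa)>0$, which fails for $\kappa$ near $8/3$ and large $n$. So I would replace \Cref{lem:pf_estimate} by a bound uniform in time,
\begin{align*}
\sup_{t\ge0}\Ex^\kappa\big[\PartF(\btheta_t)^{-1}\big] \le C'(\kappa,n)\big(1+\PartF(\btheta_0)^{-1}\big),
\end{align*}
which expresses the positive recurrence/ergodicity of the $n$-radial Bessel process. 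I would try a Lyapunov-function argument first. Set $V:=\PartF^{-1} = \prod_{i<j}\sin(\frac{\theta^j-\theta^i}{2})^{-2/\kappa}$ and compute the generator of~\eqref{eq:SDEs} applied to $V$, using the trigonometric identities of~\cite[Lemma~5.1]{Healey-Lawler:N_sided_radial_SLE} as in the proof of \Cref{lem:pf_estimate}. The drift $2\sum\cot$ pushes the points apart. The goal is to show that the negative term $-c\sum_j(\sum_{i\ne j}\cot(\frac{\theta^j-\theta^i}{2}))^2 V$ dominates near the boundary of $\chamber$, giving $\mathcal{L}V\le a-bV$ with $a,b>0$ depending only on $\kappa,n$. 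If this fails for $V$ itself, I would use $V^p$ for a suitable power and then Jensen's inequality. Dynkin's formula with localization up to exit of compact subsets of $\chamber$ then gives $\Ex[V(\btheta_t)]\le V(\btheta_0)+a/b$. A fallback would be to compare with the stationary density, which is proportional to $\PartF^{4}$ and has finite moment $\int\PartF^{3}$, combined with exponential ergodicity.

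Plugging the uniform bound into the sum yields a convergent geometric series
\begin{align*}
\sum_{\mindex\ge\mindexMin} e^{-n\mindex(8-\kappa)/(2\kappa)} \lesssim e^{-(\vee-\log 4)(8-\kappa)/(2\kappa)} .
\end{align*}
Collecting factors gives $C_n(\kappa)e^{(\u-\vee)(8-\kappa)/(2\kappa)}$ with $C_n(\kappa)$ built from $C,\xi,\hat\beta_n(\kappa),l_n$ and $C'(\kappa,n)$. The dependence on $\PartF(\btheta_0)^{-1}$ that survives from the Lyapunov bound has to be removed, or else absorbed by allowing $C_n(\kappa)$ to depend on $\btheta_0$ as in \Cref{thm:escape-estimates-small-kappa}. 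To remove it, I would bound $\Ex[V(\btheta_t)]$ uniformly over $\btheta_0$ for $t\ge1$, using the fact that the negative drift term $-bV$ forces $V$ to come down from infinity in unit time.
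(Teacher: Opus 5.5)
Your skeleton is the paper's: the same decomposition into the events $A^j_m$, the same tower-property step at $\tau_m$ using \eqref{eq:Mgle_bound} and $\PartF\le 1$, and the same use of Lemma~\ref{thm:conditional-escape-probability}. You also correctly locate the only new ingredient, a bound on $\Ex^\kappa[\PartF(\btheta_{\tau_m})^{-1}]$ that is uniform in $m$. Your observation that Lemma~\ref{lem:pf_estimate} cannot beat the decay once $\kappa(n^2+5)\ge 48$ is right.

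The paper gets this bound in one line from the Healey--Lawler estimate (their Proposition~5.7): for $t>1$ the density of $\btheta_t$ is at most $\hat C(\kappa)(\PartF)^4$, uniformly in $\btheta_0$. Hence $\Ex^\kappa[\PartF(\btheta_{\tau_m})^{-1}]\le \hat C(\kappa)\int_{\chamber}(\PartF)^3$, which is essentially your fallback.

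Your main route, a Lyapunov bound, is genuinely different and works already for $V=1/\PartF$. Write $c_j:=\sum_{i\neq j}\cot(\frac{\theta^j-\theta^i}{2})$ and let $\mathcal{A}$ be the generator of \eqref{eq:SDEs}. The identity used in the proof of Lemma~\ref{lem:pf_estimate} gives
\begin{align*}
\mathcal{A}V = V\Big[\big(\tfrac14-\tfrac{3}{2\kappa}\big)\sum_{j=1}^n c_j^2+\tfrac{n(n^2-1)}{12}\Big] ,
\qquad
\sum_{j=1}^n c_j^2 = 2\sum_{i<j}\csc^2\big(\tfrac{\theta^j-\theta^i}{2}\big)-\tfrac{n(n^2-1)}{3} .
\end{align*}
Lemma~\ref{lem:pf_estimate} discards the negative term; if you keep it, the bracket tends to $-\infty$ at $\partial\chamber$ (its leading coefficient is negative for every $\kappa<6$). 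So $\mathcal{A}V\le a-bV$, and no power $V^p$ is needed. The trade-off is that the paper's route is short but rests on a nontrivial density estimate, while yours uses only It\^o's formula and an identity already in the paper.

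The step that fails as written is removing the $\btheta_0$-dependence. This is required, because the statement asks for $C_n(\kappa)$ depending only on $\kappa$ and $n$. A $\btheta_0$-dependent constant would still give Theorem~\ref{thm:transience}, but not this proposition. A linear drift $-bV$ does not make $V$ come down from infinity. It only yields $\Ex^\kappa[V(\btheta_t)]\le e^{-bt}V(\btheta_0)+a/b$, which is unbounded in $\btheta_0$ for each fixed $t$.

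What you need is superlinearity, and your own computation supplies it. Since every $\csc^2\ge 1$, we have $V\le\big(\max_{i<j}\csc^2(\frac{\theta^j-\theta^i}{2})\big)^{\binom{n}{2}/\kappa}$. Hence $\mathcal{A}V\le a-bV^{1+\delta}$ with $\delta=\kappa/\binom{n}{2}$. Set $y(t):=\Ex^\kappa[V(\btheta_t)]$, which is finite by Lemma~\ref{lem:pf_estimate}. Localization at exit times of compacts, Fatou, the Markov property and Jensen give $y(t)-y(s)\le\int_s^t\big(a-b\,y(r)^{1+\delta}\big)\ud r$. Comparing with $\dot y=a-by^{1+\delta}$, whose solutions come down from $+\infty$ in finite time, yields $\sup_{t\ge1}y(t)\le C(\kappa,n)$ uniformly in $\btheta_0$.

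Finally, the hypothesis on $\vee$ gives more than $\tau_m\ge0$. It forces $m_0\ge 2$, hence $\tau_m\ge 2-\frac{\log(n)/2+\log 4}{n}>1$. That is exactly what an after-unit-time bound needs, whether the paper's or your repaired one.
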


The same argument also gives an estimate for the $\SLE_\kappa$ curves intersecting a crosscut. 

\begin{proof}
In the proof of \Cref{thm:escape-estimates-small-kappa}, instead of using the estimate~\eqref{eq:pf_estimate_pf},
we know that by~\cite[Proposition~5.7]{Healey-Lawler:N_sided_radial_SLE}, there exists a constant $\hat C(\kappa) = \hat C_n(\kappa) > 0$ such that the density for $(\btheta_t)_{t \ge 0}$ is bounded from above by $\hat C(\kappa) \, (\PartF(\cdot))^4$ for every $t > 1$. Hence, we have
\begin{align*}
\Ex^\kappa \bigg[ \frac{1}{\PartF(\btheta_{\tau_\mindex})} \bigg]
\leq \hat C(\kappa) \int_{\chamber} (\PartF(\boldsymbol{\alpha}))^3 \ud \boldsymbol{\alpha} =: \tilde C(\kappa) < \infty , 
\qquad \textnormal{since} \qquad \tau_\mindex > 1 .
\end{align*}
Thus, collecting the estimates in the proof of \Cref{thm:escape-estimates-small-kappa}, we obtain
\begin{align*}
\Pr^\kappa [ \xescapeevent{j}{\u}{\vee} ] 
\lesssim \;& \tilde C(\kappa) \, 
\sum_{\mindex=\mindexMin}^\infty
e^{\hat\beta_n(\kappa) n(\mindex+1-\tau_\mindex)} \,
\exp\Big(\tfrac{\xi - (n\mindex - l_n -\u)(8-\kappa)}{2\kappa}\Big) 
\\
\lesssim \;& C_n'(\kappa) \, \tilde C(\kappa) \, \exp\Big(\tfrac{\xi}{2\kappa}\Big) \, 
 \exp\Big(\tfrac{l_n(8-\kappa)}{2\kappa}\Big) \, \exp\Big(\tfrac{\u(8-\kappa)}{2\kappa}\Big)
\sum_{\mindex=\mindexMin}^\infty \exp\Big(\!- \mindex n \tfrac{(8-\kappa)}{2\kappa}\Big) ,
\end{align*}
where $C_n'(\kappa) = \exp\Big(\hat\beta_n(\kappa) n \big(1+\frac{l_n + \log(4)}{n} \big) \Big)$.
Estimating the geometric series as 
\begin{align*}
\sum_{\mindex=\mindexMin}^\infty \exp\Big(\!- \mindex n \tfrac{(8-\kappa)}{2\kappa}\Big)
\le \frac{\exp\Big(\!- \frac{\vee(8-\kappa)}{2\kappa}\Big) \exp\Big(\frac{(8-\kappa) \log(4)}{2\kappa}\Big) }{1 - \exp\Big(\!- \tfrac{n (8-\kappa)}{2\kappa}\Big)} 
, \qquad \mindexMin \le \frac{\vee - \log(4)}{n} ,
\end{align*}
yields the desired estimate. 
\end{proof}

\Transience*

\begin{proof}
Set $\mindexMin := \frac{\log(n)}{2} + \log(4) + n + 1$. 
\Cref{thm:escape-estimates} with $\u=\mindex$ and $\vee = 2\mindex$ gives
\begin{align*}
\sum_{\mindex = \mindexMin}^\infty \Pr^\kappa [ \xescapeevent{j}{\mindex}{2\mindex} ]
\le C_n(\kappa) \, \sum_{\mindex = \mindexMin}^\infty \exp\Big(\tfrac{-\mindex(8-\kappa)}{2\kappa}\Big) 
< \infty .
\end{align*}
By the convergent Borel-Cantelli lemma, almost surely only finitely many of the events $\xescapeevent{j}{\mindex}{2\mindex}$ occur:
there exists an almost surely finite constant $\Mindex$ such that $\gamma^j[\rhittingtime[\bgamma]^j(2\mindex), \infty) \subset \bD_\mindex$ for $\mindex \ge \Mindex$ and $1 \le j \le n$.
Taking $\mindex \to \infty$, we see that $\underset{t \to \infty} {\lim} \, \gamma^j(t) = 0$ almost surely.
\end{proof}

\subsection{Infinite-time LDP}

We are now ready to begin the proof of the infinite-time LDP.
Similarly to the single-radial case in~\cite{Abuzaid-Peltola:Large_deviations_of_radial_SLE0}, the idea is to first pass to the projective limit
\begin{align*}
\limcommonpaths
= \Big\{ \cev\bgamma := \big( \bgamma|_{[0,T]} \big)_{T \ge 0} \condbig \bgamma|_{[0,s]} = \prDpaths{s,t} (\bgamma|_{[0,t]}) \textnormal{ for all } 0 < s \le t \Big\} 
\; \subset \; \prod_{T \ge 0}\commonpaths[T] ,
\end{align*} 
invoke Dawson-G\"artner theorem, and then apply the escape estimate from \Cref{thm:escape-estimates-small-kappa} together with \Cref{thm:continuity-sets} stated below, 
in order to find closed homeomorphic sets under the inclusion $\limembedding \colon \commonpaths[\infty] \to \limcommonpaths$,
sending $\bgamma \mapsto \cev\bgamma$, 
whose probabilities tend to $1$ with arbitrarily good exponential rates as $\kappa \to 0$. 
A generalized version of the contraction principle can then be used to improve the topology to that for the common-capacity-parameterized curves in $\commonpaths[\infty]$.

The topology on $\limcommonpaths$ is induced from the product topology on $\prod_{T\ge 0}\commonpaths[T]$ 
by the projections $\prDpaths{T} (\cev\bgamma) =: \bgamma|_{[0,T]}$, 
and coincides with the topology of uniform convergence on compacts and thus ``forgets'' about the tail behavior of the curves. 
To make the inclusion $\limembedding$ to become a homeomorphism, we can restrict to subsets of $F \subset \commonpaths[\infty]$ where the tails do not escape:

\begin{lem}\label{thm:continuity-sets}
If for every $\u \in \bN$, there exists $\vee(\u) \in \bN$ such that 
$F \subset \bigcap_{j=1}^n (\commonpaths[\infty]\smallsetminus \xescapeevent{j}{\u}{\vee(\u)})$, 
then the inclusion $\limembedding|_{F}$ restricted to $F \subset \commonpaths[\infty]$ is a homeomorphism onto its image.
\end{lem}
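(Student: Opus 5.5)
The inclusion $\limembedding$ is injective, and it is continuous because each projection $\prDpaths{T}\circ\limembedding$ is a restriction map $\commonpaths[\infty]\to\commonpaths[T]$, which is $1$-Lipschitz since $\dcommonpaths[T]\le\dcommonpaths[]$. So the whole content of the claim is that $(\limembedding|_F)^{-1}$ is continuous. Both spaces are metrizable: the projective limit is a countable product limit, since it suffices to take integer $T$. It therefore suffices to show the following. If $\bgamma_k,\bgamma\in F$ and $\bgamma_k|_{[0,T]}\to\bgamma|_{[0,T]}$ in $\dcommonpaths[T]$ for every $T$, then $\dcommonpaths[](\bgamma_k,\bgamma)\to 0$.

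Fix $\varepsilon>0$ and pick $\u\in\bN$ with $n\,e^{-\u}<\varepsilon/4$; set $\vee=\vee(\u)$ as in the hypothesis. The plan is to split time at a deterministic $T$ beyond which every curve in $F$ is trapped in $\overline\bD_\u$.

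By \Cref{thm:rhittingtime-bounds-better}, $\rhittingtime[\bgamma]^j(\vee)\le(\vee+\tfrac{\log n}{2})/n=:T$ for every $\bgamma\in\commonpaths[\infty]$ and every $j$, uniformly. Since $F$ avoids all the escape events $\xescapeevent{j}{\u}{\vee}$, for every $\bgamma\in F$ we get $\gamma^j[T,\infty)\subset\gamma^j[\rhittingtime[\bgamma]^j(\vee),\infty)\subset\overline\bD_\u$. Hence, for all $t\ge T$,
\begin{align*}
\sum_{j=1}^n|\gamma_k^j(t)-\gamma^j(t)|\le 2n\,e^{-\u}<\varepsilon/2 ,
\end{align*}
uniformly in $k$. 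On $[0,T]$, convergence in the projective limit gives $\dcommonpaths[T](\bgamma_k|_{[0,T]},\bgamma|_{[0,T]})<\varepsilon/2$ for large $k$. Combining the two ranges gives $\dcommonpaths[](\bgamma_k,\bgamma)<\varepsilon$ for large $k$, which proves that the inverse is continuous.

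The only delicate point is that the cutoff time $T$ must not depend on the curve. This is exactly what the deterministic, curve-independent upper bound on $\rhittingtime[\bgamma]^j(\vee)$ in \Cref{thm:rhittingtime-bounds-better} provides. Without it, $F$ would only give control after a curve-dependent entrance time.

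A small technicality is that $\xescapeevent{j}{\u}{\vee}$ is defined with $\gamma$ and not $\gamma^j$ in the display. I read it as referring to $\gamma^j$, consistent with the text, and apply it to each $j$, which the intersection over $j$ in the hypothesis allows.
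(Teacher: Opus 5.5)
Your proof is correct and follows the paper's argument. The curve-independent upper bound from \Cref{thm:rhittingtime-bounds-better} gives a uniform cutoff time after which every curve in $F$ stays in $\overline\bD_\u$. The sup-distance is then controlled by the finite-time distance together with $\diam(\overline\bD_\u)$.

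The only differences are in presentation. The paper shows that $\limembedding|_F$ is an open map, using the neighborhoods $\prDpaths{\vee(\u)}^{-1}(\text{ball of radius } e^{-\u})\cap F$, whereas you argue via sequences after noting that the projective limit is metrizable. Your bound $2ne^{-\u}$, which accounts for the sum over $j$, is also slightly more careful than the paper's $2e^{-\u}$.
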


\begin{proof}
We follow the proof of~\cite[Lemma~4.3]{Abuzaid-Peltola:Large_deviations_of_radial_SLE0} (special case of~\Cref{thm:continuity-sets} with $n=1$).
Since $\limembedding$ is continuous, it suffices to show that $\limembedding|_F \colon F \to \limembedding(F)$ is an open map. 
For curves $\bgamma \in \commonpaths[\infty] \smallsetminus \xescapeevent{j}{\u}{\vee}$, 
we have $\bgamma[\rhittingtime[\bgamma]^j(\vee), \infty) \subset \overline{\bD}_\u$, while by~\Cref{thm:rhittingtime-bounds-better}, we have $\rhittingtime[\bgamma]^j(\vee) < \vee$ for sufficiently large $\vee > 0$.  
Thus, for any $\bgamma \in F$ and $\boldsymbol{\eta} \in \prDpaths{\vee(\u)}^{-1}(B_{\dcommonpaths[\vee(\u)]}(\prDpaths{\vee(\u)}(\cev\bgamma),e^{-\u})) \cap F$, 
\begin{align*}
\dcommonpaths(\bgamma, \boldsymbol{\eta}) \le \max \Big\{ \dcommonpaths[\vee(\u)] \big( \prDpaths{\vee(\u)}(\cev\bgamma) , \prDpaths{\vee(\u)}(\cev{\boldsymbol{\eta}}) \big) , \, \diam(\overline{\bD}_\u) \Big\}
= 2e^{-\u} .
\end{align*}
Since the projection $\prDpaths{\vee(\u)} \circ \limembedding^{-1} \colon \limcommonpaths \to \commonpaths[\vee(\u)]$ is continuous, 
$\limembedding \big(\prDpaths{\vee(\u)}^{-1}(B_{\dcommonpaths[\vee(\u)]}(\prDpaths{\vee(\u)}(\cev\bgamma),e^{-\u})) \cap F\big)$ 
is an open subset of $\limembedding(F)$. As $\u \in \bN$ is arbitrary, we conclude that $\limembedding|_F$ is an open map.
\end{proof}

We obtain our main \Cref{thm:n-radial-LDP-infinite-time} by following the proof strategy of~\cite[Theorem~1.2]{Abuzaid-Peltola:Large_deviations_of_radial_SLE0}.

\LDPinfinite*

\begin{proof} 
By the finite-time result from \Cref{thm:n-radial-LDP-finite} and Dawson-G\"artner theorem 
(see~\cite[Theorem~4.6.1]{Dembo-Zeitouni:Large_deviations_techniques_and_applications} and~\cite[Lemma~F]{Abuzaid-Peltola:Large_deviations_of_radial_SLE0}), 
the $n$-radial SLE measures $\limPr^\kappa := \lim_{T\to\infty}\Pr^\kappa_T\circ\prDpaths{T}$ on the projective limit $\limcommonpaths$ satisfy an LDP with good rate function 
\begin{align*}
\limnBessel := \sup_{T \geq 0}\nBessel_T\circ\prDpaths{T} \colon \limcommonpaths \to [0, +\infty].
\end{align*} 
As in the proof of~\cite[Theorem~1.2]{Abuzaid-Peltola:Large_deviations_of_radial_SLE0}, 
we can apply a generalized contraction principle (see~\cite[Lemma~C.1 in Appendix~C]{Abuzaid-Peltola:Large_deviations_of_radial_SLE0}) 
to the (discontinuous) map $\limembedding^{-1} \colon \limcommonpaths \to \commonpaths[\infty]$ to transfer the LDP to the $n$-radial SLE measures $\Pr^\kappa = \limPr^\kappa\circ\limembedding$ on the space $\commonpaths[\infty]$ of common-capacity-parameterized curves.
To this end, we need to check that the assumptions in~\cite[Lemma~C.1(i) in Appendix~C]{Abuzaid-Peltola:Large_deviations_of_radial_SLE0} are satisfied. Fix $M \in [0,\infty)$. 
By~\Cref{thm:escape-estimates-small-kappa}, there exists $\kappa_0 > 0$ and a sequence $\vec \vee = (\vee(\u))_{\u \in \bN}$ of integers such that
\begin{align*}
\Pr^\kappa [\xescapeevent{j}{\u}{\vee(\u)}] \le 2^{-\u}e^{-M/\kappa}, \qquad  \u \in \bN, \, \kappa \in (0,\kappa_0] , \; j \in \{1,\ldots,n\}.
\end{align*}
By~\Cref{thm:continuity-sets}, the image of the set $F_{\vec \vee} := \bigcap_{u \in \bN}\bigcap_{j=1}^n(\commonpaths[\infty] \smallsetminus \xescapeevent{j}{\u}{\vee(\u)})$ under $\limembedding$ is a closed continuity set of $\limembedding^{-1}$. 
The union bound yields
\begin{align*}
\limPr^\kappa [\limcommonpaths \smallsetminus \limembedding(F_{\vec \vee})] 
= \Pr^\kappa [\commonpaths[\infty] \smallsetminus F_{\vec \vee}] 
\le \sum_{\u \in \bN}\sum_{j=1}^n \Pr^\kappa [\xescapeevent{j}{\u}{\vee(\u)}] \le n \, e^{-M/\kappa},
\end{align*}
so in particular, we have
\begin{align*}
\lim_{\kappa \to 0} \kappa\log \limPr^\kappa [\limcommonpaths \smallsetminus \limembedding(F_{\vec \vee})]  \le -M.
\end{align*}
This is the needed assumption for~\cite[Lemma~C.1]{Abuzaid-Peltola:Large_deviations_of_radial_SLE0} to conclude the proof of the LDP.
Identity~\eqref{eq:Dyson-Dirichlet_energy_infty} follows from the fact that the rate function $\nBessel_T$ is increasing in $T$ and agrees with the Dyson-Dirichlet energy~\eqref{eq:Dyson-Dirichlet_energy} 
for all finite times, by \Cref{thm:n-radial-LDP-finite}.
\end{proof}

\subsection{The multiradial Loewner energy}
\label{subsec:multiradial_Loewner_energy}

To derive the Brownian loop formula~\eqref{eq:BLM_energy} for the $n$-radial Loewner energy, 
we use the behavior of finite-energy systems in terms of the semiclassical partition function~\eqref{eq:LogPartF}:

\minimum*
\begin{proof}
Taking $T \to \infty$ in~\eqref{eq:Dyson-Dirichlet_energy}, we see that $\btheta$ has a finite Dyson-Dirichlet energy~\eqref{eq:Dyson-Dirichlet_energy_infty}. 
By~\cite[Proposition 4.7]{AHP:Large_deviations_of_DBM_and_multiradial_SLE}, the translated driving function $\btheta_T - \theta^1_T$ has the limit 
\begin{align*}
\lim_{T \to \infty} \Big( \btheta_T - (\theta^1_T, \ldots, \theta^1_T) \Big) =  \big(0, \tfrac{2\pi}{n}, \ldots, \tfrac{2(n-1)\pi}{n} \big) .
\end{align*}
Since the function $\LogPartF$ defined in~\eqref{eq:LogPartF} is translation-invariant, we obtain
\begin{align*}
\lim_{T \to \infty}\LogPartF(\btheta_T) =  -2\sum_{1 \le i < j \le n} \log\sin\big(\tfrac{\pi(j-i)}{n}\big).
\end{align*}
It remains to check that this is the minimum of $\LogPartF$. 
By~\cite[Proposition 4.4]{AHP:Large_deviations_of_DBM_and_multiradial_SLE}, the gradient flows $\bvartheta$ of the potential $\LogPartF$ satisfying
\begin{align*}
\tfrac{\ud}{\ud t} \bvartheta_t = -\nabla \LogPartF(\bvartheta_t), \qquad \bvartheta_0 \in \chamber ,
\end{align*}
stabilize to some $\boldsymbol{\zeta} = \boldsymbol{\zeta}(\bvartheta_0) = (\zeta, \zeta+\frac{\pi}{n}, \ldots, \zeta+\frac{(n-1)2\pi}{n})$ in the limit $t \to \infty$. 
Since $t \mapsto \LogPartF(\bvartheta_t)$ is a non-increasing function, we conclude that 
\begin{align*}
\LogPartF(\bvartheta_0) \; \ge \;  \LogPartF(\boldsymbol{\zeta}) = -2\sum_{1 \le i < j \le n} \log\sin\big(\tfrac{\pi(j-i)}{n}\big).
\end{align*}
This finishes the proof. 
\end{proof}

\BLMForm* 

\begin{proof}
From the proof of \Cref{thm:n-radial-LDP-infinite-time}, we have
\begin{align*}
\nBessel(\bgamma) 
\; = \;\; & \sup_{T \geq 0} \nBessel_T(\bgamma)
\; = \; \sup_{T \geq 0} \Big( \nDindepenergy_{\bindeptime(T)}(\bgamma) - \Psi^0_T(\bgamma) \Big) 
&& \textnormal{[by~\eqref{eqn:BM-loop-energy}]}
\\
\; = \;\; & \limsup_{T \to \infty} \Big( \nDindepenergy_{\bindeptime(T)}(\bgamma) - \Psi^0_T(\bgamma) \Big) 
\\
\; = \;\; &  \nDindepenergy(\bgamma) \; - \; \LogPartF(\btheta_0) \; + \; \limsup_{T \to \infty} \Big( \LogPartF(\btheta_T) \; + \; 12 \cL(\bgamma[0,T]) \; - \; \tfrac{1}{2} (n + 4) (n - 1) n \, T \Big) ,
\end{align*}
and using the notation~\eqref{eq:cLrenorm}, we claim that
\begin{align}\label{eq:claim_LE}
\nBessel(\bgamma) = \nDindepenergy(\bgamma) -\LogPartF(\btheta_0) + \inf_{\btheta \in \chamber}\LogPartF(\btheta) + \limsup_{T \to \infty} 12 \cLrenorm(\bgamma[0,T]) 
=: \hat\nBessel(\bgamma) .
\end{align}
To verify the identity~\eqref{eq:claim_LE}, we consider the infinite-time behavior of $\btheta_T$.
If
\begin{align*} 
U_\infty := \liminf_{T \to \infty} \LogPartF(\btheta_T) = \inf_{\btheta \in \chamber}\LogPartF(\btheta) =: u_n ,
\end{align*} 
then~\eqref{eq:claim_LE} clearly holds. Since $u_n$ coincides with the value of $\LogPartF$ on equally-spaced configurations by \Cref{thm:minimum}, 
we see that if $U \neq u_n$, then~\cite[Proposition~4.7]{AHP:Large_deviations_of_DBM_and_multiradial_SLE} implies that $\nBessel(\bgamma) = +\infty$, 
so we see that if $U < \infty$, then also $\hat\nBessel(\bgamma) = +\infty = \nBessel(\bgamma)$, giving~\eqref{eq:claim_LE}.

Thus, let us assume that $U = +\infty$.
If $\nDindepenergy(\bgamma) = +\infty$, then $\hat\nBessel(\bgamma) = +\infty = \nBessel(\bgamma)$, giving~\eqref{eq:claim_LE}.
It remains to consider the case where $\nDindepenergy(\bgamma) < \infty$.
Then, the curves in $\bgamma$ are simple and do not touch the boundary $\partial \bD$ except at their starting points, and the divergence in $\nBessel(\bgamma) = +\infty$ is due to their interaction. 
We need to argue that also $\hat\nBessel(\bgamma) = +\infty$, which follows from arguing that $\cLrenorm(\bgamma[0,T]) \xrightarrow{T \to \infty} \infty$. 
The assumption $U = +\infty$ implies that
\begin{align*} 
\lim_{T \to \infty} \delta(\btheta_T) = 0 , \qquad \textnormal{where} \qquad \delta(\btheta_T) := \min_{1 \le j \le n} \big| \theta_T^{j+1}-\theta_T^j \big| . 
\end{align*} 
By the restriction property and conformal invariance of Brownian loop measure, we have
\begin{align*}
\cL(\bgamma[0,t+1]) - \cL(\bgamma[0,t]) \ge \cL(g_t(\bgamma[t,t+1])) , \qquad t \ge 0.
\end{align*}
Let $M := 1+\tfrac{(n + 4) (n - 1)}{24} \, n$. 
By \Cref{thm:loop-blowup} (proven below) applied to $s=1$ and to the curve $g_t(\bgamma[t,t+1])$, there exists $\varepsilon = \varepsilon(s,\nDindepenergy(\bgamma),M)> 0$
such that $\delta(\btheta_t) \leq \varepsilon$ and $\cL(g_t(\bgamma[t,t+1])) \geq M$ for all $t \ge T_\varepsilon$. Thus, for any $m \in \bN$ we have
\begin{align*} 
\cLrenorm(\bgamma[0,t+m]) 
\ge \; & \cL(\bgamma[0,t]) - \tfrac{(n + 4) (n - 1)}{24} \, n (t+m) + \sum_{j=1}^m \underbrace{\cL(g_{t+j-1}(\bgamma[t+j-1,t+j]))}_{\; \geq M}
\\
\ge \; & m - \tfrac{(n + 4) (n - 1)}{24} \, n t, \qquad t \ge T_\varepsilon.
\end{align*} 
Taking $m \to \infty$ shows that $\cLrenorm(\bgamma[0,T]) \xrightarrow{T \to \infty} \infty$ and concludes the proof. 
\end{proof}

\begin{lem}\label{thm:loop-blowup}
For every $s > 0$ and $M, M' \in [0,\infty)$, there exists $\varepsilon = \varepsilon(s,M,M')> 0$ such that if $\bgamma \in \commonpaths[s]$ satisfies $\nDindepenergy_{\bindeptime(s)}(\bgamma) \le M'$ and $\btheta_0 \in \chamber$ satisfies $\delta(\btheta_0) < \varepsilon$, then $\cL(\bgamma[0,s]) \ge M$.
\end{lem}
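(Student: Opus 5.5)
I would argue by contradiction and compactness. Suppose the claim fails for some $s, M, M'$. Then there are starting points $\btheta_{0,k} \in \chamber$ with $\delta(\btheta_{0,k}) \to 0$ and multichords $\bgamma_k \in \commonpaths[s]$ started from $e^{\ii \btheta_{0,k}}$ with $\nDindepenergy_{\bindeptime(s)}(\bgamma_k) \le M'$ but $\cL(\bgamma_k[0,s]) < M$. After relabelling and rotating, which is harmless by rotation invariance of $\cL$ and of the energies, and passing to a subsequence, we may assume that $\theta^{j+1}_{0,k} - \theta^j_{0,k} \to 0$ for some fixed $j$, and that all starting points converge.

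The first step is to exploit the uniform energy bound. By \Cref{thm:time-change-properties-better}, each individual capacity time satisfies $\indeptime^j_{\bgamma_k}(s) \ge ns - \tfrac{\log n}{2}$. Since the single-curve energy is monotone in time, the curves $\tilde\gamma^j_k$ and $\tilde\gamma^{j+1}_k$, taken in their own capacity parameterization and restricted to a fixed window $[0, s_0]$ with $0 < s_0 \le ns - \tfrac{\log n}{2}$ (shrink $s$ first if needed, since $\cL$ is monotone in $s$), have driving functions with Dirichlet energy at most $M'$. Radial curves of bounded Loewner energy form a compact family: their drivers are equi-$\tfrac12$-Hölder, and the curves are $K$-quasiconformal arcs with $K$ depending only on $M'$, as in \cite{Abuzaid-Peltola:Large_deviations_of_radial_SLE0}. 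The key consequence I would extract is a macroscopic lower bound. There is $r = r(s_0, M') > 0$ such that each $\tilde\gamma^i_k[0,s_0]$ reaches distance at least $r$ from $\partial\bD$, and does so within a region close to its starting point. This holds because capacity $s_0$ cannot be accumulated near the boundary without a large energy cost, or simply by Koebe's theorem, which bounds the diameter from below.

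The second step is the loop estimate. The two curves $\gamma^j_k$ and $\gamma^{j+1}_k$ start at points $x_k, y_k \in \partial\bD$ with $|x_k - y_k| \to 0$, and each has diameter at least $r$. Using conformal invariance together with a scaling argument in a half-disk around $x_k$, consider the annuli $\{\rho \le |z - x_k| \le 2\rho\}$ for dyadic $\rho$ between $|x_k - y_k|$ and $r/2$. Each such annulus is crossed by both curves. Brownian loops contained in $\bD$ that wind around the half-annulus separating the two curves, or simply loops of diameter comparable to $\rho$ in it, hit both crossing curves with $\BLoop_\bD$-mass bounded below by a universal constant $c > 0$, independent of the shape of the curves. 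This is the standard estimate for disjoint annulus crossings, and it is where the half-plane geometry near the boundary enters. The loop families for different dyadic scales are essentially disjoint, being supported in disjoint annuli up to bounded overlap. Summing over the scales gives
\[
\cL(\bgamma_k[0,s]) \ge c \log_2\!\big( r / |x_k - y_k| \big) - C \to \infty,
\]
which contradicts $\cL < M$.

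The main obstacle is the uniform lower bound $c$ on the loop mass per scale near the boundary. Loops must stay inside $\bD$ while touching both curves, and the curves could hug $\partial\bD$ in principle. I would first try to rule out hugging quantitatively using the energy bound: finite-energy curves leave the boundary nontangentially, within a cone whose aperture depends on $M'$, by the quasiconformal-arc property. Given that, in each half-annulus both curves cross a fixed interior sector, and loops in that sector around the midpoint give mass at least $c(M')$ by Brownian scaling.
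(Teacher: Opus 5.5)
Your route is genuinely different from the paper's and it is workable, but its crux needs a different justification from the one you give.

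\textbf{Comparison with the paper.} The paper argues softly. It takes near-minimizers $\bgamma_k$ with $\delta(\btheta_{0,k})\to 0$ and passes to a Hausdorff limit $\boldsymbol K$; goodness of the radial energy makes the geodesic extensions converge, so the components are finite-energy simple curves. It then shows, via harmonic measure, that two components must meet, asserts $\cL(\boldsymbol K)=+\infty$, and concludes by passing to the limit in $\cL$. You instead stay at fixed $k$ and sum a uniform per-scale loop mass over the $\asymp\log(1/|x_k-y_k|)$ scales between the two starting points and a macroscopic scale. This yields an explicit rate $\cL(\bgamma[0,s])\gtrsim c(M')\log(1/\delta(\btheta_0))$, not just divergence. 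The price is uniformity: you need a non-hugging estimate at every scale with constants depending only on $M'$. The paper only needs qualitative information about one limiting pair. However, two of the paper's steps are only asserted. One is that an intersection at a \emph{boundary} point forces $\cL(\boldsymbol K)=+\infty$. The other is that $\cL(\bgamma_k[0,s])\to\cL(\boldsymbol K)$; this is really a lower semicontinuity statement, since $\boldsymbol K\notin\commonpaths[s]$ and the continuity lemma does not apply. Both tacitly use the same geometric input that your argument makes explicit.

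\textbf{Two repairs.} First, $ns-\tfrac{\log n}{2}$ can be non-positive, and shrinking $s$ only makes it worse. Instead, the proof of the time-change bounds gives $\partial_t\indeptime^j\ge e^{2(nt-\indeptime^j(t))}\ge 1$, so $\indeptime^j(s)\ge s$. The Schwarz lemma then gives $\dist(0,\gamma^j[0,s])\le e^{-s}$, so each curve reaches depth $1-e^{-s}$ with no energy input.

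Second, and more importantly, the quasiconformal-arc property of $\gamma^j$ alone does not exclude hugging. An L-shaped arc that leaves $x$ perpendicularly and then runs along $\partial\bD$ at distance $\eta$ is a uniform quasiarc for every $\eta$. What you need is that every $z\in\gamma^j$ with $|z-x|\le\rho_0(M')$ satisfies $1-|z|\ge c(M')|z-x|$. There are two ways to get this.
\begin{itemize}
\item It follows if $\gamma^j$ together with its reflection across $\partial\bD$ is a quasiarc with constant depending only on $M'$, by the three-point condition.
\item Alternatively, use scaling and compactness: zooming in at $x$ preserves the energy bound (after a local radial/chordal comparison), energy sublevel sets are compact, and finite-energy curves touch $\partial\bD$ only at their starting point.
\end{itemize}
With this estimate, at each scale $\rho\ge C|x_k-y_k|$ both curves meet a fixed compact sector $U_\rho$ at depth $\asymp\rho$. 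Every loop in a slightly larger sector $S_\rho\subset\bD$ that disconnects $U_\rho$ from $\partial S_\rho$ hits both curves. This family has $\BLoop_{\bD}$-mass at least $c(M')>0$ by restriction, conformal invariance and scaling. Using every third dyadic scale makes the families disjoint, which completes your sum.
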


\begin{proof}
For each $\epsilon > 0$, let $L(s,M',\epsilon) := \underset{\boldsymbol{\eta} \in \mathfrak C^n_s(M';\epsilon)}{\inf} \,\cL(\boldsymbol{\eta}[0,s])$, where 
\begin{align*}
\mathfrak C^n_s(M';\epsilon) := \big\{\bgamma \in \commonpaths[s] \cond \delta(\btheta_0) < \epsilon \textnormal{ and } \nDindepenergy_{\bindeptime(s)}(\bgamma) \le M'\big\},
\end{align*}
Fix a decreasing sequence $\epsilon_k \downarrow 0$ as $k \uparrow \infty$, 
and a sequence of $\bgamma_k \in \mathfrak C^n_s(M';\epsilon_k)$ such that $\cL(\bgamma_k[0,s]) \le L(s,M',\epsilon_k) + \epsilon_k$.
After passing to a subsequence, we may assume that $\bgamma_k$ converges to a $n$-tuple of compact sets $\boldsymbol{K} = (K^1, \ldots, K^n)$ in the Hausdorff metric. 
By the continuity of the Brownian loop measure (\Cref{thm:BLM-cont}) $\cL(\bgamma_k[0,s]) \to \cL(\boldsymbol{K})$ as $k \to \infty$. 
It suffices to prove that $\cL(\boldsymbol{K}) = +\infty$: indeed, this ensures the existence of an integer $k \in \bN$ 
such that $\cL(\bgamma_k[0, s]) \ge M + \epsilon_k$, so that $L(s,M',\epsilon_k) \ge \cL(\bgamma_k[0, s]) - \epsilon_k \ge M$.

Let $\tilde\bgamma_k \in \Dextpath{s}{ns}$ be the independent geodesic extension of $\bgamma_k$. 
Then, the $n$-dimensional Dirichlet energy~\eqref{eqn:nDirichlet_energy_def} is finite:
\begin{align*}
\nDenergy_{\boldsymbol{ns}}(\tilde\bgamma) 
:= \sum_{j=1}^n \frac{1}{2} \int_0^{ns} \big| \tfrac{\ud}{\ud t} \tilde\vartheta_t^j \big|^2 \ud t
= \sum_{j=1}^n \Denergy_{ns}(\tilde\gamma^j) 
= \nDindepenergy_{\bindeptime(s)}(\bgamma) \leq M' < \infty ,
\end{align*}
(where $\tilde\vartheta^j$ are the independent Loewner driving functions of $\tilde\gamma^j$), 
\begin{align*}
\sum_{j=1}^n \Denergy_{ns}(\tilde\gamma^j) = \nDindepenergy_{\bindeptime(s)}(\bgamma) \leq M' < \infty ,
\end{align*}
and because the radial Loewner energy $\Denergy_{ns}$ is a good rate function, 
thus possessing compact sub-level sets~\cite[Theorem~1.2]{Abuzaid-Peltola:Large_deviations_of_radial_SLE0}, 
after passing to a further subsequence, we may assume that $\tilde\bgamma_k \to \tilde\bgamma \in \Dindpaths[ns]$ as $k \to \infty$. Note that $K^j \subset \tilde \bgamma^j$ for each $j$, so $K^1, \ldots, K^n$ are simple curves having independent capacities bounded uniformly from below as 
\begin{align*}
2\rcap(K^j) = \; & \lim_{k \to \infty} 2\indeptime_{\bgamma_k}^j(s) \ge \log(e^{2ns}+n-1)-\log(n) > 0 .
&& \textnormal{[by~\eqref{eqn:time-change-comparison-help}]}
\end{align*}
Note that $\cL(\boldsymbol{K}) = +\infty$ follows by showing that $K^i \cap K^j \neq \emptyset$ for some $1 \le i < j \le n$.
Towards a contradiction, suppose $K^i \cap K^j = \emptyset$ for every $1 \le i < j \le n$, and write $K := \bigcup_{j=1}^n K_j$. 
Since each $K^j$ is a simple curve, the components of $\partial\bD \setminus \bigcup_{j = 1}^n K$ are exactly the open boundary segments $I^j$ between the starting points of $K^j$ and $K^{j+1}$. 
As each $I^j$ is accessible from the origin, $\hmeas{\bD\setminus K}(I^j;0) > 0$. 
The conformal invariance and domain continuity of the harmonic measure now yields the sought contradiction:
\begin{align*}
0 = \lim_{k \to \infty} \epsilon_k \ge \lim_{k \to \infty} \delta(\btheta_k) \ge \min_{1 \le j \le n} \hmeas{\bD\setminus K}(I^j;0) > 0 .
\end{align*}
This finishes the proof.
\end{proof}

As a consequence, we obtain the explicit asymptotics of the Brownian loop measure interaction term for finite-energy radial multichords, 
which is linear in the capacity-time (and coincides with a perhaps surprising choice of a cocycle for the Virasoro algebra~\cite{Gelfand-Fuchs:Cohomologies_of_the_Lie_algebra_of_vector_fields_on_the_circle}).

\BLM*

\begin{proof}
This is a direct consequence of the formula~\eqref{eq:BLM_energy} in \Cref{thm:BLMForm}.
\end{proof}

\appendix
\section{Transfer of LDPs via absolute continuity}
\label{app:Varadhan}

As in~\cite{Peltola-Wang:LDP_of_multichordal_SLE_real_rational_functions_and_determinants_of_Laplacians, AHP:Large_deviations_of_DBM_and_multiradial_SLE}, 
in the present work we shall transfer the LDP from independent measures to the multicurve measure via absolute continuity using Varadhan's lemma. Since this is a recurring strategy, we find it useful to encapsulate it into a general framework. 

We first recall the classical statement, emphasizing the change of measure perspective.

\begin{lemA}
[Varadhan's lemma; see, e.g.,~\cite{Dembo-Zeitouni:Large_deviations_techniques_and_applications}, Lemmas~4.3.4 and 4.3.6]
\label{lemma:Varadhan}

Suppose that the probability measures $(P^\kappa)_{\kappa>0}$ satisfy an LDP in a topological space $X$
with good rate function $E$. 
Let $\Phi \colon X \to \bR$ be a function bounded from above, and let $Q^\kappa$ be a measure on $X$ absolutely continuous with respect to $P^\kappa$ with Radon-Nikodym derivative 
\begin{align*}
\frac{\ud Q^\kappa}{\ud P^\kappa}(x) = \exp \Big(\frac{1}{\kappa} \Phi(x) \Big) , \qquad x \in X . 
\end{align*}
Then, the following hold.
\begin{enumerate}
\item \label{item1_Var} 
If $\Phi$ is upper semicontinuous, then for any closed subset $F$ of $X$, 
\begin{align*}
\limsup_{\kappa\to 0+} \kappa \log Q^\kappa[F]
\leq - \inf_{x \in F} \big(E(x) - \Phi(x)\big) .
\end{align*}
\item \label{item2_Var} 
If $\Phi$ is lower semicontinuous, then for any open subset $O$ of $X$,
\begin{align*}
\liminf_{\kappa\to 0+} \kappa \log Q^\kappa[O]
\geq - \inf_{x \in O} \big( E(x) - \Phi(x)\big).
\end{align*}
\end{enumerate}
\end{lemA}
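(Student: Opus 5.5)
The statement is Varadhan's lemma in its tilted-measure form (Lemma~\ref{lemma:Varadhan}), with $Q^\kappa$ not necessarily a probability measure. Both bounds reduce to the classical Laplace-principle estimates of Dembo--Zeitouni, applied to $\Phi$ restricted to $F$ or $O$. The approach is to write $Q^\kappa[A] = \int_A e^{\Phi/\kappa}\,\ud P^\kappa$ and bound this integral directly, first on small neighbourhoods where $\Phi$ is nearly constant, then globally using goodness of $E$ and the upper bound on $\Phi$.

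\emph{Lower bound (item~\ref{item2_Var}).} Fix $x\in O$ with $E(x)<\infty$ and $\delta>0$. By lower semicontinuity of $\Phi$ there is an open neighbourhood $U\subset O$ of $x$ on which $\Phi>\Phi(x)-\delta$. Then
\[
Q^\kappa[O]\ge Q^\kappa[U]\ge e^{(\Phi(x)-\delta)/\kappa}P^\kappa[U].
\]
The LDP lower bound gives $\liminf\kappa\log P^\kappa[U]\ge -E(x)$. Letting $\delta\to0$ and optimizing over $x\in O$ yields the claim. This step is routine.

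\emph{Upper bound (item~\ref{item1_Var}).} Fix a closed set $F$ and set $a:=\inf_F(E-\Phi)$. Let $\Phi\le C$, and take a large level $\alpha$ and a small $\delta>0$.
\begin{enumerate}
\item Split $F$ into $F\cap\{E\le\alpha\}$ and the remainder. The first piece is compact, since $E$ is good.
\item For each $y$ in the compact piece, use upper semicontinuity of $\Phi$ and lower semicontinuity of $E$ to choose an open neighbourhood $V_y$ whose closure satisfies $\Phi<\Phi(y)+\delta$ and $\inf_{\overline{V_y}}E\ge E(y)-\delta$. The latter uses the standard fact that for a good (lower semicontinuous) rate function, the infimum over shrinking closed neighbourhoods converges.
\item Cover the compact piece by finitely many $V_{y_i}$. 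On each $F\cap\overline{V_{y_i}}$ the LDP upper bound gives the exponent $-(E(y_i)-\delta)+\Phi(y_i)+\delta\le -a+2\delta$.
\item The complement $F\smallsetminus\bigcup_i V_{y_i}$ is closed and disjoint from $\{E\le\alpha\}\cap F$, so its $E$-infimum is at least $\alpha$. Using $\Phi\le C$, its contribution to $Q^\kappa$ is at most $e^{C/\kappa}P^\kappa[\cdot]$, with exponent $\le C-\alpha$.
\item The principle of the largest term for finitely many summands then gives $\limsup\kappa\log Q^\kappa[F]\le\max(-a+2\delta,\,C-\alpha)$. Let $\alpha\to\infty$, then $\delta\to0$.
\end{enumerate}

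The main obstacle is the covering in the upper bound. One must choose neighbourhoods controlling $\Phi$ from above and $E$ from below simultaneously, and make sure the leftover part of $F$ has high $E$. The edge cases $\Phi(y)$ near $-\infty$ or $E(y)$ finite are handled by truncating the exponents with $\max(\cdot,-\alpha)$.
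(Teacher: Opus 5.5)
Your proposal is correct and is essentially the standard proof of the cited Lemmas~4.3.4 and~4.3.6 in Dembo--Zeitouni; the paper gives no argument of its own beyond that citation. For item~2 you use a neighbourhood on which $\Phi$ is nearly constant together with the LDP lower bound. For item~1 you use the compact set $F\cap\{E\le\alpha\}$, a finite cover by neighbourhoods with controlled closures, and the principle of the largest term.

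One caveat concerns your step~2, which chooses an open $V_y\ni y$ with $\overline{V_y}\subset\{\Phi<\Phi(y)+\delta\}\cap\{E>E(y)-\delta\}$. This needs $X$ to be regular. Regularity holds in the metric spaces where the paper applies the lemma, and, as far as I recall, it is the standing assumption in that section of Dembo--Zeitouni. It does not hold in an arbitrary topological space, which is what the statement literally allows.

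If you want full generality, split $F$ along a grid $c_0<c_1<\dots<c_K$ of mesh $\delta$ with $c_K>\sup\Phi$, and use
\[
Q^\kappa[F]\le e^{c_0/\kappa}+\sum_{k} e^{c_{k+1}/\kappa}P^\kappa[F\cap\{\Phi\ge c_k\}] .
\]
The sets $F\cap\{\Phi\ge c_k\}$ are closed by upper semicontinuity, so the LDP upper bound gives the exponent $\max\{c_0,\,-\inf_F(E-\Phi)+\delta\}$. This route needs neither regularity nor even goodness of $E$.
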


In the above formulation, the Radon-Nikodym derivative between the two measures is expressed in terms of the function $\Phi$ independent of $\kappa$, restricting its applicability to a very rigid class of changes of measures. 
To get a general principle, we may conversely express $\Phi = \Phi^\kappa$ in terms of the Radon-Nikodym derivative as $\Phi^\kappa = \kappa\log \big(\frac{\ud P^\kappa}{\ud Q^\kappa} \big)$, 
and then study what properties the functions $(\Phi^\kappa)_{\kappa > 0}$ need to satisfy in order for the conclusion of Varadhan's lemma to still hold. 
We formulate a sufficient property as ``steady convergence'' of the Radon-Nikodym derivatives,  
$\Phi^\kappa \xrightarrow{\kappa \to 0} \Phi^0$, in the sense of the following definition.

\begin{df}\label{def:steady-convergence}
Let $X$ be a topological space.  
A family of functions $f^\kappa \colon X \to [-\infty, +\infty]$, $\kappa > 0$, is said to converge \emph{steadily} to $f^0 \colon X \to [-\infty, +\infty]$ as $\kappa \to 0$ if the following holds.
\begin{enumerate}[label=(\roman*)]
\item\label{item:uniform-convergence} 
For every $M \in (0,\infty)$, 
the convergence $f^\kappa \xrightarrow{\kappa \to 0} f^0$ is uniform on $(f^0)^{-1}[-M, M]$: 
for every $\varepsilon > 0$, there exists $\kappa_\varepsilon = \kappa_\varepsilon(M) \in (0,\infty)$ such that
\begin{align*}
|f^0(x)| \le M\qquad \implies \qquad|f^\kappa(x)-f^0(x)| < \varepsilon, \qquad \textnormal{for all } \kappa \in (0, \kappa_\varepsilon).
\end{align*}

\item\label{item:tail-estimates} 
For every sufficiently large $M \in (0,\infty)$, there exist $\omega_M, \kappa_M  \in (0,\infty)$ such that
\begin{align*}
\begin{split}
f^0(x) & \ge  \hphantom{-} M \qquad\; \implies \qquad f^\kappa(x) \ge \hphantom{-} \omega_M, \phantom{-}\\
f^0(x) & \le -M \;\qquad \implies \qquad f^\kappa(x) \le -\omega_M,
\end{split} \qquad \textnormal{for all } \kappa \in (0, \kappa_M),
\end{align*}
and $\underset{M \to \infty}{\lim} \, \omega_M = +\infty$.
\end{enumerate}
\end{df}

Observe that if $f^0$ is bounded from above, the case $f^0(x) \ge M$ in~\ref{item:tail-estimates} is vacuous for $M$ sufficiently large, and is thus automatically satisfied.

\begin{thm}\label{thm:Varadhan-general}
Suppose that the probability measures $(P^\kappa)_{\kappa>0}$ satisfy an LDP in a topological space $X$
with good rate function $E$. 
For each $\kappa > 0$, let $\Phi^\kappa \colon X \to [-\infty, +\infty]$ be a measurable function, 
and let $Q^\kappa$ be a measure on $X$ absolutely continuous with respect to $P^\kappa$ with Radon-Nikodym derivative 
\begin{align*}
\frac{\ud Q^\kappa}{\ud P^\kappa}(x) = \exp \Big(\frac{1}{\kappa} \Phi^\kappa(x) \Big) , \qquad x \in X . 
\end{align*}
Suppose that as $\kappa \to 0$, the functions $\Phi^\kappa$ converge steadily to $\Phi^0 \colon X \to [-\infty, +\infty]$,
and that $\Phi^0$ is bounded from above. Then, the following hold.
\begin{enumerate}
\item \label{item1_Var-gen} 
If $\Phi^0$ is upper semicontinuous on a closed subset $F$ of $X$, then
\begin{align*}
\limsup_{\kappa\to 0+} \kappa \log Q^\kappa[F]
\leq - \inf_{x \in F} \big(E(x) - \Phi^0(x)\big) .
\end{align*}

\item \label{item2_Var-gen} 
If $\Phi^0$ is lower semicontinuous on an open subset $O$ of $X$, then
\begin{align*}
\liminf_{\kappa\to 0+} \kappa \log Q^\kappa[O]
\geq - \inf_{x \in O} \big( E(x) - \Phi^0(x)\big).
\end{align*}

\item \label{item3_Var-gen} 
If $\Phi^0$ is continuous, the limit 
\begin{align}\label{eq:total_mass_limit}
\underset{\kappa\to 0+}{\lim} \, \kappa \log Q^\kappa[X] = - \underset{x \in X}{\inf} \, (E(x)-\Phi^0(x)) \; \in \; [-\infty,+\infty] 
\end{align}
exists. 
If the limit does not equal $-\infty$, 
then the \textnormal{(}well-defined\textnormal{)} probability measures 
\begin{align*}
\hat Q^\kappa := \frac{Q^\kappa}{Q^\kappa[X]} , \qquad \textnormal{for small enough } \kappa>0 ,
\end{align*}
satisfy an LDP with good rate function $J \colon X \to [0, \infty]$ given by
\begin{align}\label{eq:rate_gen_J}
J(x) := E(x) - \Phi^0(x) - \inf_{y \in X} \big( E(y) - \Phi^0(y) \big) , \qquad x \in X .
\end{align}
Moreover, if $Q^\kappa$ are probability measures, then $J(x) = E(x) - \Phi^0(x)$. 
\end{enumerate}
\end{thm}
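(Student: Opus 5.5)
We would follow the standard proof of Varadhan's lemma (Dembo--Zeitouni, Lemmas~4.3.4 and~4.3.6) and use steady convergence to replace $\Phi^\kappa$ by $\Phi^0$ up to an error that vanishes on the relevant sets. Throughout we write $Q^\kappa[A] = \int_A e^{\Phi^\kappa/\kappa}\,\ud P^\kappa$. Since $\Phi^0 \le C$, condition~\ref{item:tail-estimates} of \Cref{def:steady-convergence} is needed only for $\{\Phi^0 \le -M\}$, where it gives $\Phi^\kappa \le -\omega_M$.

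\textbf{Upper bound~(1).} Fix $M$ large and split $F = F_M \cup F_M^c$ with $F_M := F \cap \{\Phi^0 > -M\}$. On $F\cap\{\Phi^0\le -M\}$ we have $\Phi^\kappa \le -\omega_M$ for small $\kappa$. Hence this part has $Q^\kappa$-mass at most $e^{-\omega_M/\kappa}$, which contributes exponential rate $-\omega_M \to -\infty$.

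On $F \cap \{-M \le \Phi^0 \le C\}$, condition~\ref{item:uniform-convergence} gives $\Phi^\kappa \le \Phi^0 + \varepsilon$ uniformly for $\kappa<\kappa_\varepsilon$. So it suffices to bound $\int_F e^{(\Phi^0\vee(-M))/\kappa}\,\ud P^\kappa$. Here $\Phi^0\vee(-M)$ is bounded and upper semicontinuous on the closed set $F$, so we can run the usual proof. Around each $x\in F$ with $E(x) \le \alpha$ (for $\alpha$ large), pick a closed neighbourhood $\bar U_x \cap F$ on which $\Phi^0 \le \Phi^0(x) + \delta$ (by upper semicontinuity) and on which the LDP upper bound gives $P^\kappa[\bar U_x] \le e^{-(E(x)-\delta)/\kappa}$ (by lower semicontinuity of $E$). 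Goodness of $E$ makes $\{E\le \alpha\}\cap F$ compact, so finitely many such neighbourhoods cover it. On the rest of $F$, $P^\kappa \le e^{-\alpha/\kappa}$ and the integrand is at most $e^{C/\kappa}$. Combining, we send $\delta,\varepsilon\to0$, then $\alpha\to\infty$, then $M\to\infty$. The truncation by $M$ is harmless because points with $\Phi^0 \le -M$ contribute at most $-\infty$ in the limit.

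\textbf{Lower bound~(2).} Fix $x\in O$ with $E(x)<\infty$ and $\Phi^0(x)>-\infty$; otherwise there is nothing to prove. Then $|\Phi^0(x)|\le M$ for some $M$. By lower semicontinuity of $\Phi^0$ on $O$, pick an open $U\ni x$, $U\subset O$, with $\Phi^0 > \Phi^0(x)-\delta$ on $U$. Intersecting with $\{\Phi^0 < C+1\}$ is automatic, so $\Phi^0 \in [-M-\delta, C]$ on $U$. Uniform convergence then gives $\Phi^\kappa \ge \Phi^0(x) - 2\delta$ on $U$ for small $\kappa$. Therefore $Q^\kappa[O] \ge e^{(\Phi^0(x)-2\delta)/\kappa} P^\kappa[U]$, and the LDP lower bound $\liminf \kappa\log P^\kappa[U]\ge -E(x)$ finishes the argument.

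\textbf{Part~(3).} Apply (1) with $F=X$ and (2) with $O=X$ to get~\eqref{eq:total_mass_limit}. When the limit $-\inf(E-\Phi^0)$ is finite, dividing by $Q^\kappa[X]$ shifts the bounds and yields the LDP for $\hat Q^\kappa$ with rate function $J$. Goodness of $J$ comes from three facts: $J$ is lower semicontinuous since $E$ is and $\Phi^0$ is continuous; $\{J\le a\}\subset\{E\le a + C + \inf(E-\Phi^0)\}$ is compact because $\Phi^0\le C$; and $J\ge 0$. If each $Q^\kappa$ is a probability measure, the limit in~\eqref{eq:total_mass_limit} is $0$, so $J = E-\Phi^0$. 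The main obstacle is the upper bound. There one must check that the truncation at level $-M$ and the only-on-$F$ semicontinuity interact correctly with the finite covering argument. In particular, the covering must be done inside $F$ using closed neighbourhoods relative to $F$, since $\Phi^0$ may be discontinuous off $F$.
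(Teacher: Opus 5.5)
Your proposal is correct and follows essentially the paper's proof. For the upper bound it truncates at level $-M$, using the tail condition~(ii) of steady convergence on $\{\Phi^0\le -M\}$ and the uniform convergence~(i) on the rest; for the lower bound it uses one neighbourhood together with~(i); and part~(3) combines the two bounds with the same goodness check for $J$. The only difference is that the paper merges the two regions into a single piecewise dominating function $\Phi^0_{M,\varepsilon}$ and cites the classical Varadhan lemma, whereas you split the integral and rerun the covering argument for $\Phi^0\vee(-M)$ relative to $F$. Your route has the minor advantage of sidestepping the semicontinuity check for that piecewise function, and of respecting that $\Phi^0$ is only assumed semicontinuous on $F$.
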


\begin{proof}
Let $M, \varepsilon > 0$, and let $\kappa_\varepsilon, \kappa_M, \omega_M > 0$ be as in \Cref{def:steady-convergence}~\ref{item:uniform-convergence}~\&~\ref{item:tail-estimates}; without loss of generality, let us assume that $\omega_M < M$. 

\begin{itemize}[leftmargin=*]
\item [\ref{item1_Var-gen}.]
As the set $F_M := (\Phi^0)^{-1}[-M, \infty]$ is closed by the upper semicontinuity of $\Phi^0$ on $F$, we have
\begin{align*}
\Phi^\kappa(x) \le\;& \begin{cases}
\Phi^0(x) + \varepsilon, & \textnormal{when } x \in F_M \textnormal{ and } \kappa < \kappa_\varepsilon, \qquad\;\, [\textnormal{by \Cref{def:steady-convergence}~\ref{item:uniform-convergence}}],\\
-\omega_M, & \textnormal{when } x \notin F_M \textnormal{ and } \kappa < \kappa_M ,\qquad [\textnormal{by \Cref{def:steady-convergence}~\ref{item:tail-estimates}}],\\
\end{cases}\\
=:\;& \Phi^0_{M, \varepsilon}(x).
\end{align*}
For $\varepsilon < M-\omega_M$, the function $x \mapsto \Phi^0_{M,\varepsilon}(x)$ is upper semicontinuous and bounded from above (as $\Phi^0$ is bounded from above by assumption), 
so Item~\ref{item1_Var} of \Cref{lemma:Varadhan} yields
\begin{align*}
\limsup_{\kappa\to 0+} \kappa \log Q^\kappa[F]
\leq \; & - \inf_{x \in F} \big(E(x) - \Phi^0_{M, \varepsilon}(x)\big) 
\\
=\;& -\min\Big\{\inf_{x \in F \cap  F_M}\big(E(x)-\Phi^0(x)+\varepsilon\big), \inf_{x \in F \smallsetminus F_M}\big(E(x)+\omega_M\big)\Big\}
\\
\underset{\varepsilon \to 0}{\overset{M \to \infty}{\longrightarrow}} \;& 
-\inf_{x \in F \cap F_\infty}(E(x)-\Phi^0(x)) = -\inf_{x \in F}(E(x)-\Phi^0(x)),
\end{align*}
where $F_\infty := \bigcup_{M>0}F_M$. 
(The last equality holds since $E(x)-\Phi^0(x) = +\infty$ on $X\smallsetminus F_\infty$.)

\item [\ref{item2_Var-gen}.]
Write $\tilde J = E - \Phi^0$. 
Without loss of generality, we assume $\tilde J(O) := \inf_{x \in O} J(x) = M < \infty$. 
Choose $x_\varepsilon \in O$ such that $\tilde J(x_\varepsilon) \le M+\varepsilon$, so that in particular, $\Phi^0(x_\varepsilon) > -\infty$. 
Since $\Phi^0$ is lower semicontinuous on $O$, $x_\varepsilon$ has an open neighborhood $O_\varepsilon \subset O$ such that $\Phi^0 \ge \Phi^0(x_\varepsilon)-\varepsilon$ on $O_\varepsilon$. 
Let $x \mapsto \Phi^0_\varepsilon(x)$ be the lower-semicontinuous function which equals $\Phi^0(x)$ on $O_\varepsilon$ and $\Phi^0(x_\varepsilon)-2\varepsilon$ on $X \smallsetminus O_\varepsilon$.
In particular, $\Phi^0_\varepsilon$ is also bounded from above (as $\Phi^0$ is bounded from above by assumption). 
Writing $M_\varepsilon := \sup_{x \in O_\varepsilon} |\Phi^0_\varepsilon(x)|$, 
we see that 
\begin{align*}
\Phi^\kappa(x) \ge \Phi^0_\varepsilon(x) - \varepsilon , \qquad x \in O_\varepsilon , \; \kappa < \kappa_\varepsilon(M_\varepsilon) ,
\end{align*}
where $\kappa_\varepsilon(M_\varepsilon)$ is chosen as in \Cref{def:steady-convergence}~\ref{item:uniform-convergence}. 
Thus, Item~\ref{item2_Var} of \Cref{lemma:Varadhan} yields
\begin{align*}
\liminf_{\kappa\to 0+} \kappa \log Q^\kappa[O]
\geq - \inf_{x \in O_\varepsilon} \big( E(x) - \Phi^0_\varepsilon(x) + \varepsilon\big)
\ge\;& -M-2\varepsilon \quad \xrightarrow{\varepsilon \to 0} \quad -M = - \tilde J(O).
\end{align*}

\item [\ref{item3_Var-gen}.]
The LDP bounds from Items~\ref{item1_Var-gen}--\ref{item2_Var-gen} proven above imply that the limit~\eqref{eq:total_mass_limit} exists in $[-\infty,+\infty]$. 
If the limit does not equal $-\infty$, then because 
\begin{align*}
0 < Q^\kappa[X] \leq \exp\bigg(\frac{1}{\kappa}\underset{x \in X}{\sup}\,\Phi^\kappa(x)\bigg) < \infty,
\end{align*}
we see that the probability measures $\hat Q^\kappa$ are well-defined for small enough $\kappa>0$.
If $Q^\kappa$ are probability measures, then obviously $\kappa \log Q^\kappa[X] = 0$ for all $\kappa$.
Also, the map $x \mapsto J(x)$ defined by~\eqref{eq:rate_gen_J} is a well-defined lower semicontinuous function,
as a sum of the lower semicontinuous $E$, the continuous $-\Phi^0$, and a constant.
Thus, for each $M \ge 0$, the sub-level set $J^{-1}[0, M] \subset X$ is closed and contained in the compact set 
\begin{align*}
E^{-1} \Big[ 0, M + \sup_{x \in X} \Phi^0(x) + \inf_{y \in X} \big( E(y)-\Phi^0(y) \big) \Big] .
\end{align*}
This shows that $J$ is a good rate function. 
\qedhere
\end{itemize}
\end{proof}

The next result states that steadily converging sequences with limits bounded from above are stable under addition. In light of~\Cref{thm:Varadhan-general} this is useful: it may be easier to decompose $\Phi^\kappa$ into steadily converging pieces instead of proving steady convergence directly. Dually from the change of measure perspective, the applicability of~\Cref{thm:Varadhan-general} is preserved when iterating changes of measures: 
if both $\frac{\ud Q^\kappa}{\ud P^\kappa} = \exp \big(\frac{\Phi^\kappa}{\kappa}\big)$ and $\frac{\ud\tilde Q^\kappa}{\ud Q^\kappa} = \exp \big(\frac{\tilde \Phi^\kappa}{\kappa}\big)$ 
satisfy the assumptions of~\Cref{thm:Varadhan-general}, then so does 
$\frac{\ud\tilde Q^\kappa}{\ud P^\kappa} = \exp\big(\frac{\Phi^\kappa+\tilde\Phi^\kappa}{\kappa}\big)$.

\begin{prop}\label{thm:steady-limit-addition}
Let $X$ be topological space and $(f^\kappa)_{\kappa > 0}$ and $(g^\kappa)_{\kappa > 0}$ sequences of maps from $X$ to $[-\infty, +\infty]$ which respectively converge steadily to functions $f^0, g^0 \colon X \to [-\infty, +\infty]$ bounded from above. 
Then, also $(f^\kappa + g^\kappa)_{\kappa > 0}$ converges steadily to $f^0 + g^0$.
\end{prop}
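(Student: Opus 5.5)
We will verify the two conditions of \Cref{def:steady-convergence} for $h^\kappa := f^\kappa + g^\kappa$ and $h^0 := f^0 + g^0$, using only that $f^0 \le A$ and $g^0 \le B$ for some finite constants $A, B$. Throughout we adopt the convention $-\infty + a = -\infty$. Since neither $f^0$ nor $g^0$ takes the value $+\infty$, no expressions of the form $\infty - \infty$ appear in the limits.

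\emph{Condition \ref{item:uniform-convergence}.} Fix $M > 0$ and suppose $|h^0(x)| \le M$. Then $f^0(x) = h^0(x) - g^0(x) \ge -M - B$ and $f^0(x) \le A$. Hence $|f^0(x)| \le M_f := \max\{M + B, A\}$ (plus absolute values), and symmetrically $|g^0(x)| \le M_g := \max\{M + A, B\}$.

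Given $\varepsilon > 0$, apply \ref{item:uniform-convergence} for $f$ with level $M_f$ and tolerance $\varepsilon/2$, and for $g$ with level $M_g$ and tolerance $\varepsilon/2$. Take $\kappa_\varepsilon(M)$ to be the minimum of the two resulting thresholds. The triangle inequality then gives $|h^\kappa(x) - h^0(x)| < \varepsilon$ for all $\kappa < \kappa_\varepsilon(M)$. In particular, $f^\kappa(x)$ and $g^\kappa(x)$ are finite on this set, so the sum is well defined.

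\emph{Condition \ref{item:tail-estimates}.} The upper-tail implication is vacuous once $M > A + B$, since $h^0 \le A + B$. For the lower tail, suppose $h^0(x) \le -M$. Then at least one of $f^0(x) \le -M/2$ or $g^0(x) \le -M/2$ holds; say $f^0(x) \le -M/2$.

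\emph{Bounding the small summand.} Applying \ref{item:tail-estimates} for $f$ at level $M/2$ gives $f^\kappa(x) \le -\omega^f_{M/2}$ for $\kappa < \kappa^f_{M/2}$. This is the main obstacle: we must bound the other summand $g^\kappa(x)$ from above uniformly, independently of where $g^0(x)$ sits. Split into two cases.
\begin{itemize}
\item If $g^0(x) \ge -M_0$ for a fixed large $M_0 \ge B$, then $|g^0(x)| \le M_0$. Condition \ref{item:uniform-convergence} with tolerance $1$ gives $g^\kappa(x) \le B + 1$ for $\kappa < \kappa^g_1(M_0)$.
\item If $g^0(x) < -M_0$, then by \ref{item:tail-estimates} for $g$ we have $g^\kappa(x) \le -\omega^g_{M_0} \le B + 1$ for $\kappa < \kappa^g_{M_0}$. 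Here we may assume $\omega^g_{M_0} \ge 0$, after taking $M_0$ large.
\end{itemize}

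Combining the cases, $h^\kappa(x) \le -\omega^f_{M/2} + B + 1$. The symmetric case, where $g^0(x) \le -M/2$, gives $h^\kappa(x) \le -\omega^g_{M/2} + A + 1$.

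\emph{Conclusion.} Set
\[
\omega_M := \min\{\omega^f_{M/2} - B - 1,\; \omega^g_{M/2} - A - 1\},
\]
and let $\kappa_M$ be the minimum of all the thresholds used above, with $M_0$ fixed once and for all. Then $\omega_M \to +\infty$ as $M \to \infty$, which completes the verification.

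The only delicate point is this uniform upper bound on the companion summand. It relies on the upper-boundedness of both limits, and it explains why that hypothesis is included in the statement.
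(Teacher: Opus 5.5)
Your proof is correct and has the same skeleton as the paper's. For condition~(i) you bound $|f^0|$ and $|g^0|$ on $(h^0)^{-1}[-M,M]$ and add the two uniform convergences. For condition~(ii) you use that $h^0(x)\le -M$ forces one summand to be at most $-M/2$.

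The one real difference is how you control the companion summand, and there your version is more complete than the paper's. The paper first normalizes $f^0,g^0\le 0$ by adding constants. It then writes $h^\kappa(x)\le\min\{f^\kappa(x),g^\kappa(x)\}$. That inequality needs $f^\kappa,g^\kappa\le 0$, and normalizing the limits does not give this.

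Your case split supplies the missing justification. Using condition~(i) on $\{g^0\ge -M_0\}$ and condition~(ii) on $\{g^0<-M_0\}$, you get $\sup_x g^\kappa(x)\le B+1$ for all small $\kappa$. This is exactly the uniform upper bound needed for that step, at the cost of shifting $\omega_M$ by a constant.

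Two cosmetic points. Take $A,B\ge 0$ without loss of generality. Then $\max\{M+B,A\}$ genuinely bounds $|f^0|$ on the relevant set, with no need for the vague ``plus absolute values''. Also, $-\omega^g_{M_0}\le B+1$ then holds automatically, because the definition already requires $\omega_M>0$.
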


\begin{proof}
Since steady convergence is clearly stable under additions of constants, we may without loss of generality assume $f^0, g^0 \le 0$. Writing $h^\kappa := f^\kappa + g^\kappa$, we need to check the conditions~\ref{item:uniform-convergence} and~\ref{item:tail-estimates} from~\Cref{def:steady-convergence} for the function $(h^\kappa)_{\kappa \ge 0}$.
\begin{itemize}[leftmargin=*]
\item[\ref{item:uniform-convergence}] Since $h^0 \le \min(f^0, g^0)$, we have 
$(h^0)^{-1}[-M, 0] \subset ((f^0)^{-1}[-M, 0]) \cap ((g^0)^{-1}[-M, 0])$ for every $M \ge 0$, 
where $f^\kappa \xrightarrow{\kappa \to 0} f^0$ and $g^\kappa \xrightarrow{\kappa \to 0} g^0$ uniformly by steady convergence. 
Since uniform convergence is stable under addition, $h^\kappa \xrightarrow{\kappa \to 0} h^0$ uniformly on $(h^0)^{-1}[-M, 0]$. 

\item[\ref{item:tail-estimates}] 
Assuming $f^0, g^0$ are non-positive, $h^0(x) < -M$ implies $\min \{ f^0(x), g^0(x) \} < - \frac{M}{2}$. 
Hence, if for $M > 0$ sufficiently large we denote by $\omega^f_M, \omega^g_M > 0$ 
the constants from~\Cref{def:steady-convergence}~\ref{item:tail-estimates} for the maps $f^\kappa$ and $g^\kappa$ respectively, then we see that $h^0(x) < -M$ implies
\begin{align*}
h^\kappa(x) \le \min \big\{ f^\kappa(x), g^\kappa(x) \big\}
\le \max \big\{-\omega^f_{M/2}, -\omega^g_{M/2} \big\} =: -\omega_M \quad \xrightarrow{M \to \infty} \quad + \infty ,
\end{align*}
for sufficiently small $\kappa > 0$.  \qedhere
\end{itemize} 
\end{proof}

\bibliographystyle{alpha}

\end{document}

%% file: macro-arxiv.tex
\global\long\def\bR{\mathbb{R}}
\global\long\def\bZ{\mathbb{Z}}
\global\long\def\bN{\mathbb{N}}

\global\long\def\bC{\mathbb{C}}

\global\long\def\bD{\mathbb{D}}

\global\long\def\cX{\mathcal{X}}
\global\long\def\cL{\mathcal{L}}

\global\long\def\cT{\mathcal{T}}

\global\long\def\cL{\mathcal{L}}

\global\long\def\ud{\,\mathrm{d}}
\global\long\def\diam{\mathrm{diam}\,}
\global\long\def\dist{\mathrm{dist}\,}

\global\long\def\closed{F}

\global\long\def\np{N} 

\global\long\def\LogPartF{\mathcal{U}}
\global\long\def\PartF{\mathcal{Z}^{\kappa}}

\global\long\def\ii{\mathrm{i}}

\renewcommand{\liminf}{\varliminf}
\renewcommand{\limsup}{\varlimsup}

\newcommand{\SLE}{\operatorname{SLE}}

\global\long\def\one{\scalebox{1.1}{\textnormal{1}} \hspace*{-.75mm} \scalebox{0.7}{\raisebox{.3em}{\bf |}} }

\def\nBessel{J} 
\def\limnBessel{\smash{\cev{\nBessel}}}

\def\nDenergy{E} 
\def\nDindepenergy{\tilde E} 
\def\Denergy{E} 

\def\btheta{{\boldsymbol{\theta}}}
\def\bvartheta{\boldsymbol{\vartheta}}
\def\bsigma{{\boldsymbol{\sigma}}}

\def\bgamma{\boldsymbol{\gamma}}

\def\bgamma{{\boldsymbol{\gamma}}}

\def\charge{\mathfrak{c}}

\global\long\def\chamber{\cT_n}

\def\Pr{\mathsf{P}}
\def\Ex{\mathsf{E}}
\def\limPr{\smash{\cev{\Pr}}}

\def\prob{\mathbb P}

\def\probSLEindep{\prob^\kappa_{\bindeptime(T)}}

\def\bE{\mathbb E}

\global\long\def\blowuptime{\tau_{\mathrm{coll}}}

\global\long\def\BLoop{\mu^{\mathrm{loop}}}

\renewcommand{\vee}{\mathsf{v}}
\renewcommand{\u}{{\mathsf{u}}}

\newcommand{\rcap}{\textnormal{cap}}

\newcommand{\cev}[1]{\accentset{\leftarrow}{#1}}

\newcommand{\commonpaths}[1][]{\mathcal{C}_{#1}^n}
\newcommand{\limcommonpaths}{\smash{\cev{\mathcal{C}}}^n}

\newcommand{\Dindpaths}[1][]{\cX_{#1}^n}

\newcommand{\dcommonpaths}[1][]{d_{\commonpaths[#1]}}
\newcommand{\Dextpath}[2]{\cX_{\bindeptime(#1), #2}^n}
\newcommand{\DextpathCl}[2]{\smash{\overline{\cX}}^n_{\bindeptime(#1), #2}}

\newcommand{\dindpaths}[1][]{d_{\Dindpaths[#1]}}

\newcommand{\limembedding}{\smash{\cev\iota}}

\newcommand{\xescapeevent}[3]{\mathfrak{R}^{#1}_{#2,#3}}

\newcommand{\rhittingtime}[1][]{{\varrho}_{#1}}

\newcommand{\cond}{\;|\;}
\newcommand{\condbig}{\;\big|\;}

\newcommand{\indeptime}{\sigma}
\newcommand{\bindeptime}{\bsigma}

\newcommand{\A}{\mathbf a}
\newcommand{\B}{\mathbf b}
\newcommand{\M}{\mathbf M}

\newcommand{\RNloop}{\mathcal{M}}

\newcommand{\hmeas}[1]{{\operatorname{hm}_{#1}}}

\newcommand{\indepT}{{\tilde T}}
\newcommand{\bindepT}{\tilde{\boldsymbol{T}}}

\newcommand{\prDpaths}[1]{\pi_{#1}}
\newcommand{\prCommonTime}[1]{\uppi_{#1}}
\newcommand{\prCommonTimeInv}[1]{\uppi^{-1}_{#1}}

\newcommand{\mindex}{m}
\newcommand{\mindexMin}{m_0}
\newcommand{\Mindex}{M}

\newcommand{\cLrenorm}{\hat\cL}